\theoremstyle{plain}
\newtheorem{theorem}{Theorem}[section]
\newtheorem{proposition}{Proposition}[section]
\newtheorem{corollary}{Corollary}[section]
\newtheorem{lemma}{Lemma}[section]
\theoremstyle{remark}
\newtheorem{remark}{Remark}[section]
\newtheorem{examples}{Examples}[section]
\newtheorem{assumption}{Assumption}[section]
\DeclareMathOperator*{\esssup}{ess\,sup}
\DeclareMathOperator{\lin}{span}
\DeclareMathOperator{\cpct}{Cap}
\begin{document}

\title[Vector analysis for 
Dirichlet forms]{
Vector analysis for 
Dirichlet forms and quasilinear PDE and SPDE 
on 
metric 
measure 
spaces 
} 


\author[M.Hinz]{Michael Hinz$^1$}
\address{
Department of Mathematics,  
University of Connecticut,
Storrs, CT 06269-3009 USA  and    Fakult\"at f\"ur Mathematik,  
Universit\"at Bielefeld,
Postfach 100131,
D-33501 Bielefeld, Germany}
\email{mhinz@math.uni-bielefeld.de}
\thanks{$^1$Research supported in part by NSF grant DMS-0505622 and by the Alexander von Humboldt Foundation Feodor (Lynen Research Fellowship Program)}
\author[M.R\"ockner]{Michael R\"ockner$^2$}
\address{Fakult\"at f\"ur Mathematik,  
Universit\"at Bielefeld,
Postfach 100131,
D-33501 Bielefeld, Germany}
\email{roeckner@math.uni-bielefeld.de}
\thanks{$^2$Research was supported in part by the German Science Foundation (DFG) through CRC
701}
\author[A.Teplyaev]{Alexander Teplyaev$^3$}
\address{Department of Mathematics, University of Connecticut, Storrs, CT 06269-3009 USA}
\email{Alexander.Teplyaev@uconn.edu}
\thanks{$^3$Research supported in part by NSF grant DMS-0505622}


\date{\today}

\begin{abstract}
Starting with a regular symmetric Dirichlet form on a locally compact separable metric space $X$, 
our paper studies elements of vector analysis, $L_p$-spaces of
vector fields and related Sobolev spaces. 
These tools are then employed to obtain existence and uniqueness results for 
some quasilinear elliptic PDE and  SPDE in variational form on $X$ by standard methods. 
For many of our  results locality is not assumed, but most interesting applications involve local regular Dirichlet forms on fractal spaces such as nested fractals and Sierpinski carpets. 
\tableofcontents
\end{abstract}
\maketitle

\section{Introduction and setup}\label{S:Intro} 

This paper is concerned with some elements of vector analysis on locally compact spaces $X$ that carry a regular Dirichlet form. We start from the notion of $1$-forms based on energy as recently introduced by Cipriani and Sauvageot in \cite{CS03,CS09} and further studied in \cite{IRT}. A priori this concept is of global, non-local nature, and the space $\mathcal{H}$ of $1$-forms defined in \cite{CS03,CS09} is a Hilbert space which in classical smooth cases agrees with the Hilbert space of $L_2$-differential $1$-forms. It is shown below that for local Dirichlet forms this approach may be seen as an extension of closely related and preceding constructions of Eberle, \cite{Eb99}, based on abstract (local) differential operators.
Within the framework of \cite{CS03,CS09} we propose to study some basic notions of vector analysis such as vector fields, gradient and divergence operators. Furthermore, a direct integral representation of $\mathcal{H}$ allows to define $L_p$-spaces over measurable fields of Hilbert spaces (fibers) $(\mathcal{H}_x)_{x\in X}$ such that the space $\mathcal{H}$ of $1$-forms in the sense of \cite{CS03,CS09} appears for $p=2$. Related Sobolev spaces of functions and vector fields come up naturally after that. We show that these tools can be applied to quasilinear elliptic PDE on $X$ in divergence and non-divergence form and  to SPDE on $X$ in variational form such as, for instance, the stochastic $p$-Laplace equation. The proposed notions of vector analysis allows to obtain existence and uniqueness results by classical fixed point and monotonicity arguments. We finally discuss a probabilistic counterpart of $\mathcal{H}$ which goes back to Nakao, \cite{N85}. This allows to give probabilistic interpretations of our vector analysis in terms of additive functionals. The mentioned direct integral representation of $\mathcal{H}$ nicely connects to (analytic and probabilistic) perturbation results for Dirichlet forms, e.g. \cite{FK04}, what permits to define analogs of non-divergence form operators in our context.
 
The main motivation for the present study comes from the analysis on fractals, cf. \cite{Ba,Ki01,Str06}. For certain classes of fractal sets the existence of a Laplace operator has been proved, see \cite{BB99,BBKT,HMT,Ki01,Li,P,Stei10} and the references therein for some examples. Linear elliptic and parabolic PDE on fractals can then be treated by standard methods, \cite{Ev98}. Semilinear equations have been studied in \cite{FaHu01}. There are also methods that apply to fully nonlinear problems, see for instance \cite{BdPR,RRW} for porous medium equations. However, to our knowledge quasilinear equations of type $$div(a(\nabla u))=f$$ or $$\Delta u +b(\nabla u)=f$$
with generally nonlinear coefficients $a$ and $b$ have not been considered so far, 
as an appropriate notion of gradient $\nabla$ on fractals had not yet been sufficiently developed. The present paper addresses these problems. It also establishes a basis for further studies of first order differential operators on fractals, which have never been carried out before. Examples of such operators and related equations include for instance Dirac operators, magnetic Schr\"odinger operators or the Navier-Stokes equations on fractals, investigated in the companion papers \cite{HTb} and \cite{HTa}, respectively. A short survey and other developments can be found in \cite{HTc,HKT,HKT2013}.

In \cite{CS03}, and later in \cite{CS09} and \cite{IRT}, a Hilbert space $\mathcal{H}$ of $1$-forms and a related analog $\partial$ of the exterior derivation (in $L_2$-sense) had been introduced by means of tensor products and energy norms, see Section \ref{S:Energy} below. For the classical Dirichlet form associated with the Laplace-Beltrami operator on a smooth compact Riemannian manifold the space $\mathcal{H}$ agrees with the Hilbert space of $L_2$-differential $1$-forms, up to an isomorphism. The norm in $\mathcal{H}$ is most conveniently expressed in terms of energy measures in the sense of LeJan and Fukushima, \cite{FOT94,LeJan78}. Without too much effort a related notion of weighted energy measures for $1$-forms can be introduced, what yields a coherent picture (especially in probabilistic terms) and is useful for some applications in \cite{HTa} and \cite{HTb}.  

The energy measure of a bounded energy finite function may be absolutely continuous with respect to the given reference measure or not. In Eberle \cite[Section 3.2 and Appendix D]{Eb99}  it is shown how to construct derivation operators if the energy measures are absolutely continuous for all functions from a dense algebra contained in the domain of the generator. On fractal spaces energy measures are typically singular with respect to the self-similar Hausdorff measure on the base space, cf. \cite{BBST99,Hino03,Hino05,Ku89}. However, the construction in \cite[Theorem 3.11]{Eb99}  is still possible if we choose a finite or countable pool of functions admitting energy densities and being energy dense in the space of bounded energy finite functions. Switching to a suitable measure $m$ if needed (a so-called \emph{energy dominant measure} \cite{Hino10} or, more specifically, a \emph{Kusuoka measure} $\widetilde{m}$, see \cite{Ki08,Ku89,T08}), this can be realized for any regular Dirichlet form. 

Following \cite{Eb99} we therefore obtain a measurable field of Hilbert spaces, \cite{Dix,Tak}. Rewriting the construction using some simple manipulations it can be shown that, roughly speaking, the resulting direct integral is a Hilbert space isomorphic to the space of $1$-forms $\mathcal{H}$. Moreover, the direct integral of Eberle's fiberwise operators coincides with the derivation $\partial$ in the sense of Cipriani and Sauvageot in the case of local commutative Dirichlet forms. Apart from minor modifications this material is not new in substance. However, the direct connection between these two constructions had not been well established before. Even more importantly, our reasoning provides a constructive fiberwise interpretation for $\mathcal{H}$ that carries over from \cite{Eb99}. 
Our results imply  that   the construction in \cite{CS03,CS09}  could be viewed as an extension of that in \cite[Theorem 3.11]{Eb99}, now based on a regular Dirichlet form instead of an abstract differential operator.  

By the self-duality of $\mathcal{H}$ we regard its elements also as vector fields and $\partial$ a gradient operator. As a first new result, a corresponding divergence operator is defined as the adjoint of $\partial$. Note that although Eberle considers the adjoint of the derivation operator, \cite[Chapter 3 b), Section 1]{Eb99}, in his case it is part of the basic hypotheses and the discussion there aims at constructing Sobolev spaces of functions rather than at investigating spaces of vector fields.

Under additional assumptions on the given Dirichlet form, its restriction to a suitable core $\mathcal{C}$ is also closable with respect to some energy dominant measure $\widetilde{m}$. This follows using arguments from \cite[Section 6.2]{FOT94} and \cite{Ki12}, respectively.

Using the above mentioned fiberwise interpretation, it is straightforward to define $L_p$-spaces of vector fields. Based on the previous closability result we then introduce Sobolev spaces of functions that make the derivation a closed operator for any $p\geq 2$, provided there exists a core $\mathcal{C}_p$ of functions having $p/2$-integrable energy densities which is dense in $L_p$ and moreover such that $\mathcal{C}_p\otimes\mathcal{C}_p$ is dense in the corresponding $L_p$-space of vector fields. These assumptions are clearly satisfied in the classical smooth context. To verify them for non-classical examples we propose to investigate abstract continuous coordinates with respect to a measure. Harmonic coordinates in the sense of \cite{Ki93h,Ki08,T08} constitute a prototype example. Any symmetric regular Dirichlet form admits such continuous coordinates with respect to the aforementioned energy dominant measure $\widetilde{m}$. Therefore we observe that if the original Dirichlet form is local and transient or induced by a local resistance form, the Sobolev spaces are well defined and the derivations are closed operators. 

The applications to PDE and SPDE follow standard patterns that become applicable thanks to the  definitions and results described above. 

The space $\mathcal{H}$ is isometrically isomorphic to the Hilbert space $\mathring{\mathcal{M}}$ of martingale additive functionals of finite energy as studied for instance in \cite{FOT94, N85}. Given our setup, this isomorphism is almost immediate. Weighted energy measures of $1$-forms and energy measures of martingale additive functionals correspond to each other, and gradients can be understood in terms of the martingale part in the Fukushima decomposition. The divergence may be expressed in terms of Nakao's divergence functional.

Our basic setup is as follows: $X$ is assumed to be a locally compact separable metric space. We do not use the metric explicitely, but need $X$ to be a locally compact and second countable Hausdorff space, and any such space is metrizable. By
$\mathcal{M}(X)$ we denote the space of (signed) Radon measures on $X$ and by $\mathcal{M}_+(X)$ the cone consisting of its non-negative elements; a measure $\mu\in\mathcal{M}_+(X)$ is an \emph{admissible reference measure} on $X$ if each open set $U\subset X$ has positive measure $\mu(U)>0$. In the sequel we assume that $\mu$ is an admissible reference measure on $X$ and, furthermore, we assume that $(\mathcal{E},\mathcal{F})$ is a regular symmetric Dirichlet (energy) form on $L_2(X,\mu)$, cf. \cite{FOT94}. More exactly, we begin our arguments with an admissible reference measure $\mu$, and later switch to an energy dominant measure $m$ if necessary, see Lemma~\ref{L:lem} below. 

Set $\mathcal{C}:=C_0(X)\cap\mathcal{F}$. By regularity the space $\mathcal{C}$ is dense in $\mathcal{F}$. It is an algebra, and if we endow it with the norm $\left\|f\right\|_{\mathcal{C}}:=\mathcal{E}(f)^{1/2}+\sup_X|f|$, we observe
\begin{equation}\label{E:boundmult}
\mathcal{E}(fg)^{1/2}\leq \left\|f\right\|_{\mathcal{C}}\left\|g\right\|_{\mathcal{C}}, \ \ f,g\in\mathcal{C},
\end{equation}
as a consequence of the Markov property, see for instance \cite{BH91}. 
Here we use the notation $\mathcal{E}(f):=\mathcal{E}(f,f)$, and we will do similarly for any other bilinear expression.

For any $g,h\in\mathcal{C}$ there exists a unique signed finite measure $\Gamma(g,h)\in\mathcal{M}(X)$ such that for any $f\in \mathcal{C}$, 
\begin{equation}\label{E:energymeasure}
2\int_X f\:d\Gamma(g,h)=\mathcal{E}(fg,h)+\mathcal{E}(fh,g)-\mathcal{E}(gh,f)\ .
\end{equation}
Obviously $\Gamma:\mathcal{C}\times\mathcal{C}\to\mathcal{M}(X)$ is a well defined symmetric bilinear mapping, and for any $g\in\mathcal{C}$, $\Gamma(g)\in\mathcal{M}_+(X)$. The measure $\Gamma(g)$ is called the \emph{energy measure of $g$}, cf. \cite{FOT94,LeJan78}. Using the estimate 
\begin{equation}\label{E:approxmeasure}
\left|\left(\int_X fd\Gamma(g)\right)^{1/2}-\left(\int_Xfd\Gamma(h)\right)^{1/2}\right|^2\leq 2\sup_{x\in X}|f(x)|\mathcal{E}(g-h),
\end{equation}
\cite[p. 111]{FOT94} we can define (finite) energy measures $\Gamma(g,h)\in\mathcal{M}(X)$ for arbitrary $g,h\in\mathcal{F}$ and even for arbitrary $g,h\in\mathcal{F}_e$, where $(\mathcal{F}_e,\mathcal{E})$ denotes the extended Dirichlet space with respect to $\mu$, that is the collection of $\mu$-measurable $\mu$-a.e. finite functions $g$ on $X$ for which there exists a $\mathcal{E}$-Cauchy sequence $(g_n)_n\subset \mathcal{F}$ such that $\lim_n g_n=g$ $\mu$-a.e.   The form $\mathcal{E}$ extends to $\mathcal{F}_e$ by $\mathcal{E}(g):=\lim_n\mathcal{E}(g_n)$, the limit being independent of the choice of $(g_n)_n$. See \cite{FOT94}. If $\mathcal{E}$ has no killing part, cf. \cite[Theorem 3.2.1]{FOT94}, then 
\begin{equation}\label{E:desirable}
\Gamma(g,h)(X)=\mathcal{E}(g,h), \ \ g,h\in\mathcal{C}.
\end{equation}

\begin{examples}\label{Ex:one}\mbox{}
\begin{enumerate}
\item[(i)] If $\Omega\subset\mathbb{R}^n$ is a smooth bounded domain and $\mathcal{E}(f)=\int_\Omega|\nabla f|_{\mathbb{R}^n}^2 dx$ then $\Gamma(f,g)=\left\langle f,g\right\rangle_{\mathbb{R}^n} dx$.
\item[(ii)] If $M$ is a smooth compact Riemannian manifold and $\mathcal{E}(f)=\int_M |df|_{T^\ast M}^2dvol$, where $d$ denotes the exterior derivative and $dvol$ the volume measure, then $\Gamma(f,g)=\left\langle df, dg\right\rangle_{T^\ast M}dvol$.
\end{enumerate}
\end{examples}

In the next section the definition on the space $\mathcal{H}$ of $1$-forms is given and the concept of energy measure is extended to $1$-forms. A fiberwise perspective is investigated and $\mathcal{H}$ is shown to coincide with the direct integral considered in \cite[Appendix D]{Eb99}. Section \ref{S:gradient} introduces gradient and divergence, equipped with suitable domains, and Section \ref{S:PDE1} presents some applications to quasilinear PDE. In Section \ref{S:closability} we discuss the question of closability when changing from the original to the energy dominant measure. Sobolev spaces and abstract continuous coordinates are introduced in Sections \ref{S:Sobo} and \ref{S:coords}, respectively, while Section \ref{S:PDE} contains some further applications, now to SPDE in the variational framework. We conclude the paper with some remarks on stochastic calculus in Section \ref{S:stochcalc}. To keep notation short, sequences or families indexed by the naturals (or pairs of naturals) will be written with index set suppressed, e.g. $(a_n)_n$ stands for $(a_n)_{n\in\mathbb{N}}$. Similarly, $\lim_n a_n$ abbreviates $\lim_{n\to \infty}a_n$.

\subsection*{Acknowledgements} We are very grateteful to the anonymous referees for their careful reading, their patience and the numerous valuable suggestions they made. In particular, we thank them for communicating the short proof of Lemma \ref{L:zerolevel} as stated. We would also like to thank Dan Kelleher for pointing out some inaccuracies in an earlier version of this paper. The first named author is especially grateful to him for observing that \cite[Lemma 3.1 (i)]{H12} is incorrect. The results of \cite[Lemma 3.1 (ii), Lemma 3.2 and Theorem 5.2]{H12} remain valid and their proofs can be fixed easily.

\section{The space $\mathcal{H}$ and weighted energy measures}\label{S:Energy}

By $\mathcal{B}_b(X)$ we denote the space of bounded Borel functions on $X$. Consider $\mathcal{C}\otimes\mathcal{B}_b(X)$, endowed with the symmetric bilinear form 
\begin{equation}\label{E:scalarprodH}
\left\langle a\otimes b, c\otimes d\right\rangle_{\mathcal{H}}:=\int_X bd\:d\Gamma(a,c),
\end{equation}
$a\otimes b, c\otimes d \in \mathcal{C}\otimes \mathcal{B}_b(X)$, and let $\left\|\cdot\right\|_\mathcal{H}$ denote the associated seminorm on $\mathcal{C}\otimes\mathcal{B}_b(X)$. It is nonnegative definite, see \cite{CS03} or Remark \ref{R:CC} (i) below. We write 
\[ker\:\left\|\cdot\right\|_\mathcal{H}:=\left\lbrace \sum_i a_i\otimes b_i\in\mathcal{C}\otimes\mathcal{B}_b(X): \left\|\sum_i a_i\otimes b_i\right\|_\mathcal{H}=0\right\rbrace \]
(with finite linear combinations) and denote the completion of $\mathcal{C}\otimes\mathcal{B}_b(X)/ker\:\left\|\cdot \right\|_{\mathcal{H}}$ with respect to $\left\|\cdot\right\|_\mathcal{H}$ by $\mathcal{H}$. The space $\mathcal{H}$ is a Hilbert space, and following \cite{CS03, CS09} we refer to $\mathcal{H}$ as the \emph{space of differential $1$-forms on $X$}. Unlike for later constructions we agree to use the same notation $a\otimes b$ for a simple tensor from $\mathcal{C}\otimes\mathcal{B}_b(X)$ and for its equivalence class in $\mathcal{H}$.

\begin{remark}\label{R:CC}\mbox{}
\begin{enumerate}
\item[(i)]  It is not difficult to see that if $a_1,...,a_n\in\mathcal{C}$ and the functions $b_1,...,b_n$ are finite linear combinations of indicator functions associated to a partition of $X$, we have
\[\left\langle \sum_i a_i\otimes b_i, \sum_i a_i\otimes b_i \right\rangle_\mathcal{H}=\sum_i\sum_j \int_X b_ib_j\:d\Gamma(a_i,a_j)\geq 0.\]
By pointwise approximation this nonnegativity is seen to hold for general elements $\sum_i a_i\otimes b_i\in \mathcal{C}\otimes\mathcal{B}_b(X)$.   
\item[(ii)] The space $\mathcal{C}\otimes\mathcal{C}$ is dense in  $\mathcal{H}$, and therefore  $\mathcal{H}$ can be constructed from $\mathcal{C}\otimes\mathcal{C}$ in an analogous manner. 
\end{enumerate}
\end{remark}

\begin{examples}\label{Ex:two}
For the Dirichlet form as in Examples \ref{Ex:one} (ii) we obtain
\[\left\langle g_1\otimes f_1, g_2\otimes f_2\right\rangle_{\mathcal{H}}=\int_M f_1f_2\left\langle dg_1,dg_2\right\rangle_{T^\ast M}dvol,  \ \ f_1,f_2,g_1,g_2\in C^\infty(M).\]
Below we will see that up to an isomorphism, $\mathcal{H}$ coincides with the Hilbert space $L_2(M,dvol, T^\ast M)$ of $L_2$-differential $1$-forms on $M$.
\end{examples}

The space $\mathcal{H}$ becomes a bimodule if we declare the algebras $\mathcal{C}$ and $\mathcal{B}_b(X)$ to act on it in the following manner: For $a\otimes b \in  \mathcal{C}\otimes \mathcal{B}_b(X)$, $c\in \mathcal{C}$ and $d\in\mathcal{B}_b(X)$ set
\begin{equation}\label{E:left}
c(a\otimes b):=(ca)\otimes b - c\otimes (ab)\ 
\end{equation}
and 
\begin{equation}\label{E:right}
(a\otimes b)d:=a\otimes (bd).
\end{equation} 
As shown in \cite{CS03} and \cite{IRT}, (\ref{E:left}) and (\ref{E:right}) extend to well defined left and right actions of the algebras
$\mathcal{C}$ and $\mathcal{B}_b(X)$, respectively. In particular, we have 
\[\left\|c(a\otimes b)\right\|_\mathcal{H} \leq \sup_X|c|\left\|a\otimes b\right\|_\mathcal{H} \ \ \text{ and }\ \  \left\|(a\otimes b)d\right\|_\mathcal{H}\leq \sup_X|d|\left\|a\otimes b\right\|_\mathcal{H}. \] 
If $(\mathcal{E},\mathcal{F})$ is local, then the Leibniz rule for energy measures \cite[Lemma 3.2.5]{FOT94} together with (\ref{E:scalarprodH})
implies that left and right multiplication agree for any $c\in\mathcal{C}$, and by approximation they are seen to agree for all $c\in\mathcal{B}_b(X)$. See \cite{H12} or \cite{IRT} for further details. 

We continue the preceding ideas and develop a \emph{global perspective}. The following results apply even if the energy measures are possibly not absolutely continuous with respect to the reference measure $\mu$.
From $\Gamma$ an $\mathcal{M}(X)$-valued bilinear mapping on $\mathcal{H}$ can be constructed. For simple tensors $a\otimes b, c\otimes d\in\mathcal{H}$ set 
\begin{equation}\label{E:energyh}
\Gamma_{\mathcal{H}}(a\otimes b, c\otimes d):=bd\:\Gamma(a,c)\ ,
\end{equation}
seen as an $\mathcal{M}(X)$-equality.

\begin{lemma}\label{L:nonnegmeasure}
(\ref{E:energyh}) extends to a well defined and uniquely determined symmetric bilinear mapping $\Gamma_{\mathcal{H}}:\mathcal{H}\times\mathcal{H}\to\mathcal{M}(X)$ such that for any $\omega\in\mathcal{H}$, $\Gamma_\mathcal{H}(\omega)\in\mathcal{M}_+(X)$. For any $\omega,\eta\in\mathcal{H}$ we have $\Gamma_{\mathcal{H}}(\omega,\eta)(X)=\left\langle \omega,\eta\right\rangle_\mathcal{H}$. 
\end{lemma}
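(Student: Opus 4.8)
The \emph{key idea} I would use is to reduce every assertion about the measure-valued map $\Gamma_{\mathcal{H}}$ to a statement about the scalar inner product $\langle\cdot,\cdot\rangle_{\mathcal{H}}$, by exploiting the right action (\ref{E:right}) of $\mathcal{B}_b(X)$ on $\mathcal{H}$. The starting observation is that for simple tensors and any $\varphi\in\mathcal{B}_b(X)$ one has, by (\ref{E:energyh}), (\ref{E:right}) and (\ref{E:scalarprodH}), the identity $\int_X\varphi\,d\Gamma_{\mathcal{H}}(a\otimes b,c\otimes d)=\int_X\varphi bd\,d\Gamma(a,c)=\langle(a\otimes b)\varphi,c\otimes d\rangle_{\mathcal{H}}$, using $(a\otimes b)\varphi=a\otimes(b\varphi)$. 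Since the right action is a contraction, $\|\omega\varphi\|_{\mathcal{H}}\le\sup_X|\varphi|\,\|\omega\|_{\mathcal{H}}$, the right-hand side $I_{\omega,\eta}(\varphi):=\langle\omega\varphi,\eta\rangle_{\mathcal{H}}$ makes sense for arbitrary $\omega,\eta\in\mathcal{H}$, is linear in $\varphi$, and satisfies $|I_{\omega,\eta}(\varphi)|\le\sup_X|\varphi|\,\|\omega\|_{\mathcal{H}}\|\eta\|_{\mathcal{H}}$; in particular it vanishes whenever $\|\omega\|_{\mathcal{H}}=0$, so it is well defined on the quotient and its completion.

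Next I would invoke the Riesz--Markov representation theorem. Restricting $\varphi$ to $C_0(X)$, the functional $I_{\omega,\eta}$ is bounded on $(C_0(X),\sup_X|\cdot|)$, hence represented by a unique signed Radon measure $\Gamma_{\mathcal{H}}(\omega,\eta)\in\mathcal{M}(X)$ of total variation at most $\|\omega\|_{\mathcal{H}}\|\eta\|_{\mathcal{H}}$, characterised by $\int_X\varphi\,d\Gamma_{\mathcal{H}}(\omega,\eta)=\langle\omega\varphi,\eta\rangle_{\mathcal{H}}$ for all $\varphi\in C_0(X)$. Uniqueness of the representing measure then yields at once the bilinearity of $\Gamma_{\mathcal{H}}$ (the right-hand side being bilinear in $(\omega,\eta)$), its consistency with (\ref{E:energyh}) on simple tensors, and the claimed uniqueness of the extension. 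For symmetry I would first record the self-adjointness of real right multiplication, $\langle\xi\varphi,\zeta\rangle_{\mathcal{H}}=\langle\xi,\zeta\varphi\rangle_{\mathcal{H}}$, which is immediate on simple tensors from (\ref{E:scalarprodH}) and (\ref{E:right}) and extends by bilinearity and continuity; combining it with the symmetry of $\langle\cdot,\cdot\rangle_{\mathcal{H}}$ gives $I_{\omega,\eta}=I_{\eta,\omega}$ on $C_0(X)$, whence $\Gamma_{\mathcal{H}}(\omega,\eta)=\Gamma_{\mathcal{H}}(\eta,\omega)$.

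For nonnegativity of $\Gamma_{\mathcal{H}}(\omega)$ I would test against $\varphi\in C_0(X)$ with $\varphi\ge 0$ and write $\varphi=\psi^2$ with $\psi=\sqrt{\varphi}\in C_0(X)$. Using the associativity of the right action, $(\omega\psi)\psi=\omega\psi^2$ (clear on simple tensors, extended by continuity) together with the self-adjointness above, I obtain $\int_X\varphi\,d\Gamma_{\mathcal{H}}(\omega)=\langle\omega\psi^2,\omega\rangle_{\mathcal{H}}=\langle\omega\psi,\omega\psi\rangle_{\mathcal{H}}=\|\omega\psi\|_{\mathcal{H}}^2\ge 0$, so the functional is positive and the representing measure nonnegative. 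Finally, for the total-mass identity I would use that $X$, being locally compact and second countable, is $\sigma$-compact, and choose cutoffs $\varphi_n\in C_0(X)$ with $0\le\varphi_n\le 1$ and $\varphi_n\uparrow 1$; since $\Gamma_{\mathcal{H}}(\omega,\eta)$ is a finite measure, dominated convergence gives $\Gamma_{\mathcal{H}}(\omega,\eta)(X)=\lim_n\int_X\varphi_n\,d\Gamma_{\mathcal{H}}(\omega,\eta)=\lim_n\langle\omega\varphi_n,\eta\rangle_{\mathcal{H}}$. The step I expect to be most delicate is verifying $\omega\varphi_n\to\omega$ in $\mathcal{H}$: for a simple tensor this follows from $\|a\otimes(b\varphi_n-b)\|_{\mathcal{H}}^2=\int_X b^2(\varphi_n-1)^2\,d\Gamma(a)\to 0$ by dominated convergence (the energy measures being finite), and it passes to general $\omega$ by a standard density argument using that the operators $\omega\mapsto\omega\varphi_n$ are contractions on $\mathcal{H}$ uniformly in $n$. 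Granting this, $\langle\omega\varphi_n,\eta\rangle_{\mathcal{H}}\to\langle\omega,\eta\rangle_{\mathcal{H}}$, which gives $\Gamma_{\mathcal{H}}(\omega,\eta)(X)=\langle\omega,\eta\rangle_{\mathcal{H}}$ and completes the proof.
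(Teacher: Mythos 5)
Your proof is correct. It rests on the same two pillars as the paper's argument: the contractive right action (\ref{E:right}) of $\mathcal{B}_b(X)$ on $\mathcal{H}$ and the Riesz representation theorem on $C_0(X)$; but the organization is genuinely different. The paper works diagonally first: it defines $\Gamma_{\mathcal{H}}$ on finite linear combinations by (\ref{E:GammaH}), checks that zero-norm elements give the zero measure, then extends to general $\omega$ by choosing approximating combinations $\omega_k$ and setting $\Gamma_{\mathcal{H}}(\omega)(\varphi):=\lim_k\int_X\varphi\,d\Gamma_{\mathcal{H}}(\omega_k)$ for $\varphi\geq 0$ (identified as $\left\|\omega\sqrt{\varphi}\right\|_{\mathcal{H}}^2$), splits general $\varphi$ into $\varphi_+-\varphi_-$, applies the positive Riesz theorem, and only at the end polarizes to get the bilinear map; the total-mass identity is then read off the construction, since the $\Gamma_{\mathcal{H}}(\omega_k)$ converge weakly with total masses $\left\|\omega_k\right\|_{\mathcal{H}}^2\to\left\|\omega\right\|_{\mathcal{H}}^2$. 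You instead define the off-diagonal object directly and without approximating sequences, via the functional $\varphi\mapsto\left\langle\omega\varphi,\eta\right\rangle_{\mathcal{H}}$, which makes sense for arbitrary $\omega,\eta\in\mathcal{H}$ because the right action is already known (cited from \cite{CS03,IRT} just before the lemma) to extend contractively to $\mathcal{H}$; the signed Riesz theorem then produces $\Gamma_{\mathcal{H}}(\omega,\eta)$ in one stroke, and symmetry and positivity become algebraic consequences of the module identities $\left\langle\xi\varphi,\zeta\right\rangle_{\mathcal{H}}=\left\langle\xi,\zeta\varphi\right\rangle_{\mathcal{H}}$ and $(\omega\psi)\psi=\omega\psi^2$ — in effect your positivity computation $\int_X\varphi\,d\Gamma_{\mathcal{H}}(\omega)=\left\|\omega\sqrt{\varphi}\right\|_{\mathcal{H}}^2$ is the same identity the paper builds into its definition, here derived instead. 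What your route buys: no polarization, no verification that limits are independent of the chosen approximating sequence, and a definition with no choices in it, which makes the uniqueness assertion transparent. What it costs: the total-mass identity no longer falls out of the construction, so you need the additional cutoff argument with $\varphi_n\uparrow 1$ and the strong convergence $\omega\varphi_n\to\omega$, which you handle correctly (dominated convergence on simple tensors, then density plus the uniform contraction bound); also, the paper's construction yields as a byproduct that $\Gamma_{\mathcal{H}}(\omega)$ is the weak limit of the measures $\Gamma_{\mathcal{H}}(\omega_k)$, a fact it reuses later (e.g. in the proof of Lemma \ref{L:lem}), whereas your argument would need a separate remark to recover it.
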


The measure $\Gamma_{\mathcal{H}}(\omega,\eta)$ may be interpreted as a \emph{weighted energy measure}.

\begin{examples}
In Examples \ref{Ex:two} we have $\Gamma_\mathcal{H}(g_1\otimes f_1, g_2\otimes f_2)=f_1f_2\left\langle dg_1,dg_2\right\rangle_{T^\ast M}dvol$. 
\end{examples}

\begin{proof}
First note that for any finite linear combination $\sum_i a_i\otimes b_i\in\mathcal{C}\otimes\mathcal{B}_b(X)$ and any $\varphi\in \mathcal{B}_b(X)$ with $\varphi\geq 0$ we have
\[\sum_i\sum_j\int_X\varphi b_ib_j\:d\Gamma(a_i,a_j)=\left\|\sum_i a_i\otimes (\sqrt{\varphi}b_i)\right\|^2_{\mathcal{H}}\geq 0\]
by definition (\ref{E:right}), hence $\sum_i\sum_j b_ib_j\:d\Gamma(a_i,a_j)$ is a member of $\mathcal{M}_+(X)$. Therefore, if $\sum_i a_i\otimes b_i$ has zero norm, $\sum_i\sum_j b_ib_j\:d\Gamma(a_i,a_j)$ is the zero measure. Now consider finite linear combinations $\sum_i f_i\otimes g_i\in\mathcal{H}$. For each $i$ let $\widetilde{f_i}\otimes \widetilde{g_i}\in\mathcal{C}\otimes\mathcal{B}_b(X)$ be a representant of $f_i\otimes g_i\in \mathcal{H}$ and set
\begin{equation}\label{E:GammaH}
\Gamma_{\mathcal{H}}\left(\sum_i f_i\otimes g_i\right):=\sum_i\sum_j \widetilde{g_i}\widetilde{g_j}\Gamma(\widetilde{f_i},\widetilde{f_j}).
\end{equation} 
By the previous arguments (\ref{E:GammaH}) is a well defined element of $\mathcal{M}_+(X)$. Given a general $1$-form $\omega\in\mathcal{H}$, let $(\omega_k)_k$ be a sequence of finite linear combinations $\omega_k=\sum_{i=1}^{n_k} f_i^{(k)}\otimes g_i^{(k)}\in\mathcal{H}$
approximating $\omega$ in $\mathcal{H}$. For a non-negative function $\varphi\in\mathcal{B}_b(X)$ obviously $\sqrt{\varphi}\in\mathcal{B}_b(X)$ and by (\ref{E:right}),
\[\lim_k\int_X\varphi\:d\Gamma_{\mathcal{H}}(\omega_k)=\lim_k\sum_i\sum_j\int_X\varphi\: \widetilde{g_i}^{(k)}\widetilde{g_j}^{(k)}d\Gamma(\widetilde{f_i}^{(k)},\widetilde{f_j}^{(k)})
=\lim_k\left\|\omega_k\sqrt{\varphi}\right\|_{\mathcal{H}}^2
=\left\|\omega\sqrt{\varphi}\right\|_{\mathcal{H}}^2.\]
Set 
\begin{equation}\label{E:positive}
\Gamma_{\mathcal{H}}(\omega)(\varphi):=\lim_k\int_X\varphi\:d\Gamma_{\mathcal{H}}(\omega_k).
\end{equation}
For arbitrary $\varphi\in\mathcal{B}_b(X)$ consider the standard decomposition $\varphi=\varphi_+-\varphi_-$ with
$\varphi_+=\max(\varphi, 0)$, $\varphi_-=\max(-\varphi,0)$ and define a linear functional on $\mathcal{B}_b(X)$ by
\begin{equation}\label{E:defaslimit}
\Gamma_{\mathcal{H}}(\omega)(\varphi):=\lim_k\int_X\varphi\:d\Gamma_{\mathcal{H}}(\omega_k)=\lim_k\int_X\varphi_+\:d\Gamma_{\mathcal{H}}(\omega_k)-\lim_k\int_X\varphi_-\:d\Gamma_{\mathcal{H}}(\omega_k).
\end{equation}
As this equals $\left\|\omega\sqrt{\varphi_+}\right\|_{\mathcal{H}}^2-\left\|\omega\sqrt{\varphi_-}\right\|_{\mathcal{H}}^2$,
we have
\begin{equation}\label{E:Gammabound}
|\Gamma_{\mathcal{H}}(\omega)(\varphi)|\leq 2\sup_x|\varphi(x)|\left\|\omega\right\|_{\mathcal{H}}^2.
\end{equation}
(\ref{E:defaslimit}) and (\ref{E:Gammabound}) hold in particular for any $\varphi\in C_0(X)$, (\ref{E:positive}) is non-negative if $\varphi\geq 0$. Hence by the Riesz representation theorem there exists a unique non-negative Radon measure $\Gamma_{\mathcal{H}}(\omega)\in\mathcal{M}_+(X)$ such that $\int_X\varphi\:d\Gamma_{\mathcal{H}}(\omega)=\Gamma_{\mathcal{H}}(\omega)(\varphi)$ for all $\varphi\in C_0(X)$.
By (\ref{E:Gammabound}) and denseness this extends to all $\varphi\in C_b(X)$, and $\Gamma_{\mathcal{H}}(\omega)$ is seen to be the weak limit of the measures $\Gamma_{\mathcal{H}}(\omega_k)$.  
Finally, a corresponding bilinear mapping $\Gamma_{\mathcal{H}}$ can be defined via polarization, and the last statement of the lemma follows easily  from (\ref{E:GammaH}) and (\ref{E:positive}).
\end{proof}

To the support of the measure $\Gamma_{\mathcal{H}}(\omega)$ we refer as the \emph{support of the $1$-form $\omega\in\mathcal{H}$}.

\begin{corollary}\label{C:measure}\mbox{} 
\begin{enumerate}
\item[(i)] If $\omega\in \mathcal{H}$ is such that $\left\|\omega\right\|_{\mathcal{H}}=0$,
then $\Gamma_{\mathcal{H}}(\omega)=0$ in $\mathcal{M}(X)$. 
\item[(ii)] For any $\omega,\eta\in\mathcal{H}$ and any Borel set $A$, $|\Gamma_{\mathcal{H}}(\omega,\eta)|(A)\leq \Gamma_{\mathcal{H}}(\omega)(A)^{1/2}\Gamma_{\mathcal{H}}(\eta)(A)^{1/2}$ for any Borel set $A\in\mathcal{B}(X)$. In particular, $\Gamma_{\mathcal{H}}(\omega,\eta)=0$ in $\mathcal{M}(X)$ if $\omega$ and $\eta$ have disjoint supports. 
\end{enumerate}
\end{corollary}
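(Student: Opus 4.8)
The plan is to derive both parts from Lemma~\ref{L:nonnegmeasure}, whose total-mass identity $\Gamma_{\mathcal{H}}(\omega,\eta)(X)=\langle\omega,\eta\rangle_{\mathcal{H}}$ and its localized refinement carry all the weight. Part (i) is immediate: by the lemma $\Gamma_{\mathcal{H}}(\omega)\in\mathcal{M}_+(X)$ and $\Gamma_{\mathcal{H}}(\omega)(X)=\langle\omega,\omega\rangle_{\mathcal{H}}=\|\omega\|_{\mathcal{H}}^2$, so if $\|\omega\|_{\mathcal{H}}=0$ the non-negative measure $\Gamma_{\mathcal{H}}(\omega)$ has vanishing total mass and must therefore be the zero measure.

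For part (ii) I would first establish a scalar Cauchy--Schwarz inequality on each Borel set. The crucial input is the identity $\int_X\varphi\,d\Gamma_{\mathcal{H}}(\omega)=\|\omega\sqrt{\varphi}\|_{\mathcal{H}}^2$ for non-negative $\varphi\in\mathcal{B}_b(X)$, which is exactly what the proof of Lemma~\ref{L:nonnegmeasure} produces through the right action (\ref{E:right}). Since the right action is linear in the form, polarizing this identity yields $\int_X\varphi\,d\Gamma_{\mathcal{H}}(\omega,\eta)=\langle\omega\sqrt{\varphi},\eta\sqrt{\varphi}\rangle_{\mathcal{H}}$, and the Cauchy--Schwarz inequality in the Hilbert space $\mathcal{H}$ gives
\[\left|\int_X\varphi\,d\Gamma_{\mathcal{H}}(\omega,\eta)\right|\leq\left(\int_X\varphi\,d\Gamma_{\mathcal{H}}(\omega)\right)^{1/2}\left(\int_X\varphi\,d\Gamma_{\mathcal{H}}(\eta)\right)^{1/2}.\]
Specializing to $\varphi=\mathbf{1}_A$ for a Borel set $A$ then produces the pointwise bound $|\Gamma_{\mathcal{H}}(\omega,\eta)(A)|\leq\Gamma_{\mathcal{H}}(\omega)(A)^{1/2}\Gamma_{\mathcal{H}}(\eta)(A)^{1/2}$.

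The main obstacle is upgrading this bound on the signed value $|\Gamma_{\mathcal{H}}(\omega,\eta)(A)|$ to the stated bound on the total variation $|\Gamma_{\mathcal{H}}(\omega,\eta)|(A)$, which is genuinely larger. Here I would invoke the partition characterization $|\nu|(A)=\sup\sum_i|\nu(A_i)|$, the supremum running over finite Borel partitions $(A_i)$ of $A$. Applying the scalar bound to each piece and then the elementary discrete Cauchy--Schwarz inequality gives
\[\sum_i|\Gamma_{\mathcal{H}}(\omega,\eta)(A_i)|\leq\sum_i\Gamma_{\mathcal{H}}(\omega)(A_i)^{1/2}\Gamma_{\mathcal{H}}(\eta)(A_i)^{1/2}\leq\Gamma_{\mathcal{H}}(\omega)(A)^{1/2}\Gamma_{\mathcal{H}}(\eta)(A)^{1/2},\]
where the last step uses finite additivity of the non-negative measures $\Gamma_{\mathcal{H}}(\omega)$ and $\Gamma_{\mathcal{H}}(\eta)$ over the partition. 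Passing to the supremum over all partitions yields the claimed inequality.

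Finally, for the disjoint-support assertion, write $S_\omega=\supp\Gamma_{\mathcal{H}}(\omega)$ and $S_\eta=\supp\Gamma_{\mathcal{H}}(\eta)$ and suppose $S_\omega\cap S_\eta=\emptyset$. For an arbitrary Borel set $A$ I would split $A=(A\cap S_\omega)\cup(A\setminus S_\omega)$. On $A\cap S_\omega\subseteq X\setminus S_\eta$ one has $\Gamma_{\mathcal{H}}(\eta)(A\cap S_\omega)=0$, whereas on $A\setminus S_\omega\subseteq X\setminus S_\omega$ one has $\Gamma_{\mathcal{H}}(\omega)(A\setminus S_\omega)=0$; in either case the inequality of part (ii) forces $|\Gamma_{\mathcal{H}}(\omega,\eta)|$ to vanish on that piece. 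Adding the two contributions gives $|\Gamma_{\mathcal{H}}(\omega,\eta)|(A)=0$ for every Borel set $A$, hence $\Gamma_{\mathcal{H}}(\omega,\eta)=0$ in $\mathcal{M}(X)$.
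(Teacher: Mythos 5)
Your proof is correct, and it takes a genuinely different route from the paper's. For (i) the paper quotes the bound (\ref{E:Gammabound}), $|\Gamma_{\mathcal{H}}(\omega)(\varphi)|\leq 2\sup_x|\varphi(x)|\left\|\omega\right\|_{\mathcal{H}}^2$, whereas you use the total-mass identity $\Gamma_{\mathcal{H}}(\omega)(X)=\left\|\omega\right\|_{\mathcal{H}}^2$; both are immediate from Lemma \ref{L:nonnegmeasure}. For (ii) the paper never localizes the inner product: following Maeda, it expands $0\leq\Gamma_{\mathcal{H}}(\omega-\lambda\eta)=\Gamma_{\mathcal{H}}(\omega)-2\lambda\Gamma_{\mathcal{H}}(\omega,\eta)+\lambda^2\Gamma_{\mathcal{H}}(\eta)$ as an inequality of measures, deduces $|\Gamma_{\mathcal{H}}(\omega,\eta)|(A)\leq\frac{1}{2}\left(\lambda^{-1}\Gamma_{\mathcal{H}}(\omega)(A)+\lambda\Gamma_{\mathcal{H}}(\eta)(A)\right)$ for all $\lambda>0$, optimizes in $\lambda$ (with a case distinction when one of the two measures vanishes on $A$), and finally extends from relatively compact to arbitrary Borel sets by regularity. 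You instead push everything back into the Hilbert space: polarization of $\Gamma_{\mathcal{H}}(\omega)(\varphi)=\left\|\omega\sqrt{\varphi}\right\|_{\mathcal{H}}^2$, Cauchy--Schwarz in $\mathcal{H}$, and then the partition characterization of total variation combined with discrete Cauchy--Schwarz. Your total-variation upgrade is arguably cleaner than the paper's (no case distinction, no optimal $\lambda$), and your splitting argument for the disjoint-support statement makes explicit what the paper dismisses as "a simple consequence".

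One step deserves more care than you give it. You apply the identity $\int_X\varphi\,d\Gamma_{\mathcal{H}}(\omega)=\left\|\omega\sqrt{\varphi}\right\|_{\mathcal{H}}^2$ with $\varphi=\mathbf{1}_A$ for an arbitrary Borel set $A$, asserting this is "exactly what the proof of Lemma \ref{L:nonnegmeasure} produces". It is not quite: the lemma defines the right-hand side as a functional on all of $\mathcal{B}_b(X)$, but the measure $\Gamma_{\mathcal{H}}(\omega)$ is produced by Riesz representation from the restriction of that functional to $C_0(X)$, and the paper only identifies measure and functional on $C_0(X)$ and then $C_b(X)$. A priori $A\mapsto\left\|\omega\mathbf{1}_A\right\|_{\mathcal{H}}^2$ is just a finitely additive set function, and its agreement with the constructed measure on Borel sets is an extra claim. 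It is true, and you can close the gap in either of two standard ways: prove countable additivity directly (for $B_N\downarrow\emptyset$ one gets $\left\|\omega\mathbf{1}_{B_N}\right\|_{\mathcal{H}}\to 0$ by checking it on finite sums of simple tensors via dominated convergence and using that right multiplication (\ref{E:right}) by an indicator is a contraction), so that $A\mapsto\left\|\omega\mathbf{1}_A\right\|_{\mathcal{H}}^2$ is a finite Borel measure agreeing with $\Gamma_{\mathcal{H}}(\omega)$ on $C_0(X)$ and hence everywhere; or run your scalar Cauchy--Schwarz inequality only for $\varphi\in C_0(X)$, $\varphi\geq 0$, and pass to indicators of Borel sets using inner and outer regularity of the finite Radon measures involved. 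With either patch your argument is complete.
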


\begin{proof}
(i) is a consequence of (\ref{E:Gammabound}). The first statement in (ii) follows by a standard argument, see e.g. \cite[Proposition 3.3]{M80}: By Lemma \ref{L:nonnegmeasure},
\[0\leq \Gamma_{\mathcal{H}}(\omega-\lambda\eta)=\Gamma_{\mathcal{H}}(\omega)-2\lambda\Gamma_{\mathcal{H}}(\omega,\eta)+\lambda^ 2\Gamma_{\mathcal{H}}(\eta).\]
For any relatively compact Borel set $A$ and any $\lambda>0$,
\[|\Gamma_{\mathcal{H}}(\omega,\eta)|(A)\leq \frac{1}{2}\left(\lambda\Gamma_{\mathcal{H}}(\omega)(A)+\lambda^{-1}\Gamma_{\mathcal{H}}(\eta)(A)\right).\]
If, without loss of generality, $\Gamma_{\mathcal{H}}(\eta)=0$, then we can let $\lambda$ go to zero to see the left hand side is zero. If both $\Gamma_{\mathcal{H}}(\omega)$ and $\Gamma_{\mathcal{H}}(\eta)$ are nonzero, consider
\[\lambda=\frac{\Gamma_{\mathcal{H}}(\omega)(A)^{1/2}}{\Gamma_{\mathcal{H}}(\eta)(A)^{1/2}}\]
to arrive at the desired inequality. By the regularity properties of the measures it extends to arbitrary Borel sets. The last statement in (ii) is a simple consequence.
\end{proof}

\begin{remark}
In the present paper the weighted energy measures $\Gamma_\mathcal{H}(\omega,\eta)$ will not play a predominant role. However, they are substantially used in \cite{HTa} and \cite{HTb}, and we feel that for systematic reasons they should be discussed here.
\end{remark}

The above picture can be complemented by a \emph{fiberwise perspective}. The following fact is well known, see for instance \cite[Lemmas 2.2-2.4]{Hino10}. For the convenience of the reader we briefly sketch it.
\begin{lemma}\label{L:lem}
Given a 
regular Dirichlet form $(\mathcal{E},\mathcal{F})$ on $L_2(X,\mu)$, it is always possible to construct an admissible reference measure $\widetilde{m}$ such that for all $f\in\mathcal{C}$, the measure $\Gamma(f)$ is absolutely continuous with respect to $\widetilde{m}$ and the density $\frac{d\Gamma(f)}{d\widetilde{m}}$ is in $L_1(X,\widetilde{m})$. Moreover, $\widetilde{m}$ may chosen to be finite.
\end{lemma}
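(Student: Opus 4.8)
The plan is to construct $\widetilde{m}$ explicitly as a countable convex combination of energy measures, exploiting the separability that regularity provides. First I would observe that since $(\mathcal{E},\mathcal{F})$ is regular, the algebra $\mathcal{C} = C_0(X)\cap\mathcal{F}$ is dense in $\mathcal{F}$ in the $\mathcal{E}_1$-norm, and $\mathcal{F}$ is separable (as $X$ is second countable and the form is regular). Hence I can choose a countable family $(f_n)_n \subset \mathcal{C}$ that is $\mathcal{E}_1$-dense in $\mathcal{C}$, and moreover I would normalize so that each energy measure $\Gamma(f_n)$ is a finite measure with, say, $\Gamma(f_n)(X) = \mathcal{E}(f_n) \leq 1$ after rescaling. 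The natural candidate is then
\begin{equation}\label{E:candidate}
\widetilde{m} := \sum_n c_n\, \Gamma(f_n) + \mu_0,
\end{equation}
where $(c_n)_n$ are positive weights with $\sum_n c_n < \infty$ chosen to make the sum a finite measure, and $\mu_0$ is an auxiliary admissible finite measure (for instance a suitably normalized measure charging every open set, possibly $\mu$ itself restricted and renormalized if $\mu$ is not finite) added to guarantee admissibility.

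Next I would verify the two required properties. For admissibility, the term $\mu_0$ ensures $\widetilde{m}(U) > 0$ for every open $U$, so that condition is handled by construction; finiteness follows from $\sum_n c_n\,\Gamma(f_n)(X) \leq \sum_n c_n < \infty$ together with finiteness of $\mu_0$. The substantive point is absolute continuity: I must show that $\Gamma(f) \ll \widetilde{m}$ for \emph{all} $f \in \mathcal{C}$, not merely for the countable family $(f_n)_n$. The mechanism here is the approximation estimate \eqref{E:approxmeasure}, which controls the energy measures uniformly in the total variation / weak sense by the $\mathcal{E}$-distance. Given arbitrary $f \in \mathcal{C}$ and a Borel set $A$ with $\widetilde{m}(A) = 0$, every $\Gamma(f_n)(A) = 0$; approximating $f$ by a subsequence $f_{n_k} \to f$ in $\mathcal{E}_1$ and passing to the limit via \eqref{E:approxmeasure} should force $\Gamma(f)(A) = 0$. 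The density bound $\frac{d\Gamma(f)}{d\widetilde{m}} \in L_1(X,\widetilde{m})$ is then automatic since $\Gamma(f)$ is a finite measure (indeed $\Gamma(f)(X) = \mathcal{E}(f) < \infty$ when there is no killing, and finite in general by \eqref{E:approxmeasure}), and a finite measure absolutely continuous with respect to $\widetilde{m}$ has an $L_1(\widetilde{m})$ Radon--Nikodym density.

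The main obstacle I expect is precisely the passage from the countable family to all of $\mathcal{C}$ in the absolute-continuity step, since absolute continuity is not obviously preserved under the weak-type convergence governed by \eqref{E:approxmeasure}. The estimate \eqref{E:approxmeasure} bounds differences of the \emph{square roots} of integrals $\int_X \varphi\,d\Gamma(\cdot)$ against $\mathcal{E}(f - f_{n_k})$ for bounded continuous $\varphi$, which gives weak convergence $\Gamma(f_{n_k}) \to \Gamma(f)$ but does not immediately transfer null sets. To close this gap cleanly, I would instead argue at the level of densities: once $\Gamma(f_{n_k}) \to \Gamma(f)$ weakly and each $\Gamma(f_{n_k}) \ll \widetilde{m}$, I would use a lower-semicontinuity argument, or equivalently show that for any Borel set $A$ the inequality $\Gamma(f)(A)^{1/2} \leq \Gamma(f_{n_k})(A)^{1/2} + (2\,\widetilde{m}(A))^{1/2}\mathcal{E}(f-f_{n_k})^{1/2}$ (a localized form of \eqref{E:approxmeasure}, applied with indicator-type test functions and then regularized) yields $\Gamma(f)(A) = 0$ whenever $\widetilde{m}(A)=0$. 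This localized estimate is the technical heart, and it is exactly the content established in the references \cite{Hino10} cited just before the lemma, so I would lean on those Lemmas 2.2--2.4 to legitimize the limiting argument rather than reproving the measure-theoretic approximation from scratch.
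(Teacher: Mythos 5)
Your construction is, in substance, the paper's own: there one takes a countable $\mathcal{E}_1$-dense family $\left\lbrace \psi_n\right\rbrace_n\subset\mathcal{C}$, normalizes $f_n=\psi_n/\Gamma(\psi_n)(X)^{1/2}$ as in \eqref{E:fn} so that each $\Gamma(f_n)$ is a probability measure, sets $\widetilde{m}=\sum_n 2^{-n}\Gamma(f_n)$ as in \eqref{E:mtilde}, and transfers null sets to an arbitrary $f\in\mathcal{C}$ along an $\mathcal{E}_1$-approximating subsequence, exactly as you propose. The one structural deviation is your auxiliary summand $\mu_0$. It is harmless --- enlarging the dominating measure can only help absolute continuity --- and it makes admissibility trivial, but it is unnecessary: the paper shows that the bare sum of energy measures is already admissible, since if a nonempty open $U$ had $\Gamma(f_n)(U)=0$ for all $n$, then for a nontrivial $\varphi\in\mathcal{C}$ supported in $U$ (which exists by regularity) approximation by the $f_{n_j}$ together with the Cauchy--Schwarz inequality for energy measures (compare Corollary \ref{C:measure}\,(ii)) would force $\mathcal{E}(\psi,\varphi)=0$ for every $\psi\in\mathcal{C}$ supported in $U$, hence $\mathcal{E}(\varphi)=0$, a contradiction. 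Both routes prove the lemma; the paper's has the side benefit of producing exactly the measure \eqref{E:mtilde} that is reused later, e.g. in Theorem \ref{T:coordexist}.

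The step you call the technical heart is resolved more simply than by the localized estimate you write down, and that estimate is in fact false as stated. The inequality \eqref{E:approxmeasure} is not confined to continuous integrands: for a \emph{fixed Borel set} $B$, the map $(g,h)\mapsto\Gamma(g,h)(B)$ is a nonnegative symmetric bilinear form on $\mathcal{C}$ (bilinearity of $\Gamma$ together with $\Gamma(g)\in\mathcal{M}_+(X)$), so $g\mapsto\Gamma(g)(B)^{1/2}$ is a seminorm, and since $\Gamma(g-h)(B)\leq\Gamma(g-h)(X)\leq 2\mathcal{E}(g-h)$ the triangle inequality gives $\left|\Gamma(f)(B)^{1/2}-\Gamma(f_{n_k})(B)^{1/2}\right|\leq \left(2\mathcal{E}(f-f_{n_k})\right)^{1/2}$. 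This is precisely how the paper argues: if $\widetilde{m}(B)=0$ then $\Gamma(f_{n_k})(B)=0$ for every $k$, and letting $k\to\infty$ yields $\Gamma(f)(B)=0$; no weak convergence, regularization of indicators, or lower semicontinuity is needed. Your inequality, by contrast, replaces the correct factor $\sqrt{2}\,\mathcal{E}(f-f_{n_k})^{1/2}$ by $(2\widetilde{m}(A))^{1/2}\mathcal{E}(f-f_{n_k})^{1/2}$, and no such strengthening can hold: when $\widetilde{m}(A)=0$ it would give $\Gamma(f)(A)\leq\Gamma(f_{n_k})(A)$ from a \emph{single} approximant (so no limiting argument would be needed at all), and applied with vanishing second argument it would assert $\Gamma(f)(A)\leq 2\mathcal{E}(f)\,\widetilde{m}(A)$ for all Borel $A$, i.e. the uniform bound $\frac{d\Gamma(f)}{d\widetilde{m}}\leq 2\mathcal{E}(f)$ $\widetilde{m}$-a.e., whereas in general these densities are only in $L_1(X,\widetilde{m})$ --- that is the whole point of the lemma. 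Since you ultimately defer this step to \cite{Hino10} (as the paper itself does for the statement as a whole), your proposal is not fatally gapped, but the explicit estimate should be replaced by the seminorm argument above.
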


As usual we write $\mathcal{E}_1(f):=\mathcal{E}(f)+\left\|f\right\|_{L_2(X,\mu)}^2$, $f\in\mathcal{F}$.

\begin{proof} As $(\mathcal{F},\mathcal{E}_1)$ is a separable Hilbert space, it possesses a countable dense subset $\left\lbrace e_n\right\rbrace_n$ (in practice we may for instance take a countable orthonormal basis and its finite linear combinations with rational coeffcients). For fixed $n$, let $(\varphi_{n,k})_k$ be a sequence of functions from $\mathcal{C}$ such that 
\[\mathcal{E}_1(e_n-\varphi_{n,k})^{1/2}\leq 2^{-k}, \ \ k\in\mathbb{N}.\]
Then $\left\lbrace \varphi_{n,k}\right\rbrace_{k,n}$ is a countable family of functions from $\mathcal{C}$ and also dense in $\mathcal{F}$ with respect to $\mathcal{E}_1$. Let $\left\lbrace \psi_n\right\rbrace_n$ be an enumeration of this family. We may assume that each $\psi_n$ has positive energy. Set
\begin{equation}\label{E:fn}
f_n:=\frac{\psi_n}{\Gamma(\psi_n)(X)^{1/2}}.
\end{equation} 
For each $n\in\mathbb{N}$, $\Gamma(f_n)$ is a probability measure. Let $(U_n)_n$ be an exhaustion of $X$ by a sequence of non-empty relatively compact open sets $U_n\subset X$ with $\overline{U_n}\subset U_{n+1}$, $n\in\mathbb{N}$. Since $\mu$ is an admissible reference measure, we have
$\mu(U_{n+1}\setminus U_n)\geq \mu(U_{n+1}\setminus \overline{U_n})>0$. Now put
\begin{equation}\label{E:mtilde}
\widetilde{m}:=\sum_{n=0}^\infty 2^{-n} \Gamma(f_n) +  \sum_{k=0}^\infty 2^{-k}\mu(U_{k+1}\setminus U_k)^{-1}\mu|_{U_{k+1}\setminus U_k}+\mu(U_0)^{-1}\mu|_{U_0}.
\end{equation}
The series obviously converge set-wise, and proceeding as in the proof of Lemma \ref{L:nonnegmeasure} they are also seen to converge in the weak topology. For any $f\in\mathcal{C}$ there is some approximating sequence $(f_{n_j})_j$ and by construction each $\Gamma(f_{n_j})$ is absolutely continuous with respect to $\widetilde{m}$. If $B\in \mathcal{B}(X)$ is such that $\widetilde{m}(B)=0$, then $\Gamma(f_{n_j})(B)=0$ for all $j$ and since
\[|\Gamma(f)(B)^{1/2}-\Gamma(f_{n_j})(B)^{1/2}|^2\leq 2 \mathcal{E}(f-f_{n_j}),\]
by (\ref{E:approxmeasure}), we have $\Gamma(f)(B)=0$, too. Since $\mu(B)>0$ implies $\widetilde{m}(B)>0$ for any $B\in\mathcal{B}(X)$, the measure $\widetilde{m}$ is an admissible reference measure.
\end{proof}

Let us return to the fixed regular symmetric Dirichlet form $(\mathcal{E},\mathcal{F})$ on $L_2(X,\mu)$ as used in (\ref{E:energymeasure}) and (\ref{E:scalarprodH}). From now on we assume the following:  
\begin{assumption}\label{A:energydominant}
\text{The measure $m$ is an admissible reference measure such that for any $f\in\mathcal{C}$,} \\
\text{the measure $\Gamma(f)$ is absolutely continuous with respect to $m$.}
\end{assumption} 
Note that in this case $\Gamma(f)=\frac{d\Gamma(f)}{dm}$ is in $L_1(X,m)$ for any $f\in\mathcal{C}$. If all energy measures $\Gamma(f)$, $f\in\mathcal{C}$, are absolutely continuous with respect to $\mu$, we may use $m:=\mu$. If not, we switch to the measure $m:=\widetilde{m}$ from Lemma \ref{L:lem}. As this is sufficient for later purposes, the above assumption is no additional restriction.

\begin{remark} If $(\mathcal{E},\mathcal{F})$ is irreducible or transient or if it is induced by a regular resistance form then $(\mathcal{E},\mathcal{C})$ can be shown to be closable in $L_2(X,\widetilde{m})$. This will be discussed in Section \ref{S:Sobo}. In the present and the next two sections closability is not needed.
\end{remark}

We recall a construction from \cite{Eb99}. Let $\mathcal{A}_0=\left\lbrace f_n\right\rbrace_n$ be a countable collection of functions which is $\mathcal{E}$-dense in $\mathcal{C}$, i.e. such that for any $f\in\mathcal{C}$ there exists a sequence $(f_{n_j})_j\subset \mathcal{A}_0$ with  $\lim_j\mathcal{E}(f-f_{n_j})=0$. For any finite linear combination $u=\sum_{i=1}^N\lambda_if_i$ and any Borel set $A\subset X$ we have 
\[0\leq \Gamma(u)(A)=\sum_{i=1}^N\sum_{j=1}^N\lambda_i\lambda_j\int_A\Gamma_x(f_i,f_j)m(dx)=\int_A \overline{\lambda}^T(\Gamma_x(f_i,f_j))_{i,j=1,...,N}\overline{\lambda}\:m(dx) ,\]
where $\overline{\lambda}=(\lambda_1,...,\lambda_N)\in\mathbb{R}^N$ and $\overline{\lambda}^T$ is its transpose. We can therefore choose Borel versions $x\mapsto \Gamma_x(f_i,f_j)$ of the classes $\Gamma(f_i,f_j)\in L_1(X,m)$ such that for all $N\in\mathbb{N}$ and all $x\in X$, the matrix $(\Gamma_x(f_i,f_j))_{i,j=1,...,N}$ is symmetric and nonnegative definite over $\mathbb{Q}^N$. For two finite linear combinations $u=\sum_i\lambda_if_i$ and $v=\sum_j\mu_jf_j$ from $\mathcal{A}:=\lin(\mathcal{A}_0)$ set
\[\Gamma_x(u,v):=\sum_i\sum_j\lambda_i\mu_j\Gamma_x(f_i,f_j) .\]
Then for all $x\in X$, $\Gamma_{x}$ clearly is a non-negative definite bilinear form on $\mathcal{A}$. Consider the factor $\mathcal{A}/ker\:\Gamma_x$, where  $ker\:\Gamma_x:=\left\lbrace f\in\mathcal{A}: \Gamma_x(f)=0\right\rbrace$ and let $d_xf$ denote the equivalence class of $f\in\mathcal{A}$. Define
\begin{equation}\label{E:EberleAnsatz}
(d_xf,d_xg)_{\mathcal{B}_x}=\Gamma_x(f,g)
\end{equation}
for all $f,g\in\mathcal{A}$ and let $\mathcal{B}_x$ denote the completion of $\mathcal{A}/ker\:\Gamma_x$ in $(\cdot,\cdot)_{\mathcal{B}_x}$, clearly a Hilbert space.  

For convenience we recall the following definitions: A collection $(H_x)_{x\in X}$ of Hilbert spaces $(H_x, (\cdot,\cdot)_{H_x})$ together with a subspace $M$ of $\prod_{x\in X} H_x$ is called a \emph{measurable field of Hilbert spaces} if 
\begin{enumerate}
\item[(i)] an element $\xi\in \prod_{x\in X}H_x$ is in $M$ if and only if $x\mapsto (\xi,\eta)_{H_x}$ is measurable for any $\eta\in M$,
\item[(ii)] there exists a countable set $\left\lbrace \xi_i: i\in\mathbb{N}\right\rbrace\subset M$ such that for all $x\in X$ the span of $\left\lbrace \xi_i(x):i\in\mathbb{N}\right\rbrace$ is dense in $H_x$.
\end{enumerate}
The elements of $M$ are usually referred to as \emph{measurable sections}. Two measurable fields of Hilbert spaces $(H_x)_{x\in X}$ and $(\widetilde{H}_x)_{x\in X}$ are \emph{essentially isometric} if there are a null set $N\subset X$ and a collection $(\Phi_x)_{x\in X\setminus\mathcal{N}}$ of isometries $\Phi_x:H_x\to\widetilde{H}_x$ such that $\xi\in \prod_{x\in X} H_x$ is a member of $M$ if and only if $x\mapsto \Phi_x(\xi(x))\in\widetilde{M}$. If $N$ may be chosen to be empty, we say that $(H_x)_{x\in X}$ and $(\widetilde{H}_x)_{x\in X}$ are \emph{isometric}.

\begin{remark}\label{R:ortho}
Orthonormalizing the $\xi_i$ from (ii) in the respective spaces one obtains the following useful fact: There is  a countable set $\left\lbrace \eta_i: i\in\mathbb{N}\right\rbrace\subset M$ such that for any $x$ with $H_x$ infinite-dimensional, it provides a orthonormal basis and for any $x$ with $dim\:H_x=d(x)$, $\eta_1(x),...,\eta_{d(x)}(x)$ is an orthonormal basis and $\eta_i(x)=0$, $i>d(x)$. For a proof see \cite[Proposition II.4.1]{Dix} or \cite[Lemma 8.12]{Tak}. Note that every $\eta_i(x)$ is a finite linear combination of elements $\xi_j(x)$. $\left\lbrace \eta_i: i\in\mathbb{N}\right\rbrace\subset M$ is then referred to as a \emph{measurable field of orthogonal bases}.
\end{remark}

\begin{lemma}\label{L:essentially}\mbox{}
\begin{enumerate}
\item[(i)] The collection $(\mathcal{B}_x)_{x\in X}$ is a measurable field of Hilbert spaces.
\item[(ii)] Different choices of versions above lead to essentially isometric fields of Hilbert spaces.
\end{enumerate}
\end{lemma}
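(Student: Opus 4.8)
The plan is to build both parts on the standard fact that a family of Hilbert spaces together with a sufficiently rich countable family of sections determines a unique measurable field structure, cf. \cite{Dix,Tak}. Throughout I use the sections $\xi_i\colon x\mapsto d_xf_i$, $i\in\mathbb{N}$, where $\{f_i\}_i=\mathcal{A}_0$, as the fundamental family.

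For (i) two things must be checked. First, for each pair $(i,j)$ the scalar map $x\mapsto(\xi_i(x),\xi_j(x))_{\mathcal{B}_x}=\Gamma_x(f_i,f_j)$ must be measurable; but this holds by construction, since the $\Gamma_x(f_i,f_j)$ were selected precisely as Borel versions of the classes $\Gamma(f_i,f_j)\in L_1(X,m)$. Second, for each fixed $x$ the span of $\{\xi_i(x)\}_i=\{d_xf_i\}_i$ must be dense in $\mathcal{B}_x$; this is immediate because $\mathcal{A}=\lin(\mathcal{A}_0)$ and $\mathcal{B}_x$ is by definition the completion of $\mathcal{A}/\ker\Gamma_x$, so $\{d_xf_i\}_i$ is total in $\mathcal{B}_x$. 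The cited criterion then produces a unique subspace $M\subset\prod_{x}\mathcal{B}_x$ of measurable sections containing all $\xi_i$ and satisfying (i)--(ii) of the definition, where $M$ is characterized by: $\xi\in M$ if and only if $x\mapsto(\xi(x),d_xf_i)_{\mathcal{B}_x}$ is measurable for every $i$. This settles (i).

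For (ii), suppose $(\Gamma_x(f_i,f_j))$ and $(\Gamma'_x(f_i,f_j))$ are two choices of Borel versions satisfying the nonnegativity requirement, yielding fields $(\mathcal{B}_x)$ and $(\mathcal{B}'_x)$ with sections $d_xf$ and $d'_xf$. Since for each fixed pair $(i,j)$ both versions represent the same element of $L_1(X,m)$, they coincide off an $m$-null set $N_{ij}$; as there are only countably many pairs, $N:=\bigcup_{i,j}N_{ij}$ is again $m$-null, and for every $x\notin N$ one has $\Gamma_x(f_i,f_j)=\Gamma'_x(f_i,f_j)$ simultaneously for all $i,j$, hence $\Gamma_x=\Gamma'_x$ as bilinear forms on all of $\mathcal{A}$. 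For such $x$ the kernels agree and $d_xf\mapsto d'_xf$ extends by continuity to a well-defined linear isometry $\Phi_x\colon\mathcal{B}_x\to\mathcal{B}'_x$, because $\|d_xf\|_{\mathcal{B}_x}^2=\Gamma_x(f)=\Gamma'_x(f)=\|d'_xf\|_{\mathcal{B}'_x}^2$. It then remains to verify that $(\Phi_x)_{x\notin N}$ is an essential isometry in the sense of the definition, i.e. that it carries $M$ onto $M'$; this follows from the characterization recalled in (i), since $\Phi_x(d_xf_i)=d'_xf_i$ and $\Phi_x$ is isometric give $(\Phi_x\xi(x),d'_xf_i)_{\mathcal{B}'_x}=(\xi(x),d_xf_i)_{\mathcal{B}_x}$ for all $i$, so $x\mapsto\Phi_x(\xi(x))$ lies in $M'$ exactly when $\xi\in M$.

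I expect the only genuinely delicate point to be this last step of (ii): \emph{essential isometry} demands not merely fiberwise isometries but that the spaces of measurable sections correspond, and this must be extracted from the total-sequence characterization of $M$ rather than taken for granted. In (i), by contrast, the substantive input---measurability of the fiberwise inner products---is already guaranteed by the a priori choice of Borel versions, so that part reduces to a routine application of the standard construction.
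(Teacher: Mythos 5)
Your proposal is correct and follows essentially the same route as the paper: both parts hinge on characterizing the measurable sections as those $\xi$ for which $x\mapsto(\xi(x),d_xf_i)_{\mathcal{B}_x}$ is measurable for all $i$ (the paper verifies this criterion by hand via fiberwise approximation from $\mathcal{A}$, while you cite the standard generation result of \cite{Dix,Tak}), and part (ii) is handled identically in both: a countable union of null sets, fiberwise isometries obtained by density, and transfer of measurability through the inner products against the fundamental sections. The only cosmetic difference is that you should note explicitly that each $\Phi_x$ is onto $\mathcal{B}'_x$ (immediate, since its range is closed and contains the dense image of $\mathcal{A}$), as the paper does.
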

\begin{proof}
Let $\mathcal{M}$ be the subspace of all $\xi\in \prod_{x\in X} \mathcal{B}_x$ such that $x\mapsto (\xi(x),d_xf_n)_{\mathcal{B}_x}$ is measurable for any $n$. Obviously all $x\mapsto d_xf$, $f\in\mathcal{A}$, are in $\mathcal{M}$. For general $\xi\in \prod_{x\in X} \mathcal{B}_x$ and each $x\in X$ there is a sequence $(g_k)\subset\mathcal{A}$ such that
\[\lim_k\left\|\xi(x)-d_xg_k\right\|_{\mathcal{B}_x}=0.\]
Hence a section $\xi$ is in $\mathcal{M}$ if and only if $x\mapsto (\xi(x),d_xf_n)_{\mathcal{B}_x}$ are measurable for all $n\in\mathbb{N}$. This shows (i).

To see (ii), assume $x\mapsto \widetilde{\Gamma}_x(f_i,f_j)$ are further versions of $\Gamma(f_i,f_j)\in L_1(X,m)$ so that the previous agreements are valid and denote the similarly constructed spaces by $\widetilde{\mathcal{B}}_x$. Then there exists a null set $\mathcal{N}$ such that 
\[(\widetilde{d}_xf_i,\widetilde{d}_xf_j)_{\widetilde{\mathcal{B}}_x}=(d_xf_i,d_xf_j)_{\mathcal{B}_x}\]
for all $i,j\in\mathbb{N}$ and $x\in X\setminus\mathcal{N}$. By the denseness of $\mathcal{A}/ker\:\Gamma_x$ in $\mathcal{B}_x$ and $\mathcal{A}/ker\:\widetilde{\Gamma_x}$ in $\widetilde{\mathcal{B}}_x$ we obtain a unique isometry $\Phi_x$ from $\mathcal{B}_x$ onto $\widetilde{\mathcal{B}}_x$ for any $x\in X\setminus\mathcal{N}$. If now $\xi\in\mathcal{M}$ then
\[(\Phi_x(\xi(x)),\widetilde{d}_xf_n)_{\widetilde{\mathcal{B}}_x}=(\xi(x),d_xf_n)_{\mathcal{B}_x} \]
for $x\in X\setminus\mathcal{N}$ and all $n\in\mathbb{N}$, and the right-hand side is a measurable function of $x$. Therefore $\Phi_x(\xi(x))$ is a measurable section. Similarly for the converse direction.
\end{proof}

This construction may be rephrased as follows. For any point $x\in X$ and arbitrary simple tensors $a\otimes b, c\otimes d\in\mathcal{A}\otimes\mathcal{B}_b(X)$ put
\begin{equation}\label{E:scalarfiber}
\Gamma_{\mathcal{H},x}(a\otimes b, c\otimes d):=b(x)d(x)\Gamma_x(a,c).
\end{equation}
As a consequence of the above choice of versions every $\Gamma_{\mathcal{H},x}$, $x\in X$, defines a non-negative definite bilinear form on $\mathcal{A}\otimes\mathcal{B}_b(X)$. Set $$ker\:\Gamma_{\mathcal{H},x}:=\left\lbrace \sum_i a_i\otimes b_i\in\mathcal{A}\otimes\mathcal{B}_b(X): \Gamma_{\mathcal{H},x}(\sum_i a_i\otimes b_i)=0\right\rbrace$$ and let $\mathcal{H}_x$ be the Hilbert space obtained as the completion of $\mathcal{A}\otimes\mathcal{B}_b(X)/ker\:\Gamma_{\mathcal{H},x}$ with respect to scalar product determined by
\[\left\langle[a\otimes b]_x, [c\otimes d]_x\right\rangle_{\mathcal{H}_x}=\Gamma_{\mathcal{H},x}(a\otimes b, c\otimes d),\]
where $[a\otimes b]_x\in \mathcal{A}\otimes\mathcal{B}_b(X)/ker\:\Gamma_{\mathcal{H},x}$ denotes the equivalence class of $a\otimes b$. 
Note that 
\begin{equation}\label{E:pullout}
[a\otimes b]_x=[a\otimes b(x)]_x=b(x)[a\otimes \mathbf{1}]_x \text{ for any $x\in X$} 
\end{equation}
and any $a\otimes b\in\mathcal{A}\otimes\mathcal{B}_b(X)$, because $\Gamma_{\mathcal{H},x}(a\otimes(b-b(x))=0$ by (\ref{E:scalarfiber}).

\begin{examples}
For the classical Dirichlet form on a smooth compact Riemannian manifold as in Examples \ref{Ex:one}(ii) and \ref{Ex:two} we have
$\Gamma_{\mathcal{H},x}(g_1\otimes f_1, g_2\otimes f_2)=f_1(x)f_2(x)\left\langle dg_1(x), dg_2(x)\right\rangle_{T_x^\ast M}$ and $\mathcal{H}_x$ is the cotangent space $T_x^\ast M$ at $x\in M$.
\end{examples}

\begin{lemma}\label{L:equal} 
The collection $(\mathcal{H}_x)_{x\in X}$ is a measurable field of Hilbert spaces on $X$. The measurable fields $(\mathcal{H}_x)_{x\in X}$ and $(\mathcal{B}_x)_{x\in X}$ are isometric.
\end{lemma}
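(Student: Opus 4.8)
The plan is to exhibit, for each fixed $x\in X$, a concrete isometric isomorphism $\Psi_x\colon\mathcal{B}_x\to\mathcal{H}_x$ and then to transport the measurable structure of $(\mathcal{B}_x)_{x\in X}$, established in Lemma~\ref{L:essentially}, through the family $(\Psi_x)_{x\in X}$. First I would define $\Psi_x$ on the dense subspace $\mathcal{A}/ker\:\Gamma_x\subset\mathcal{B}_x$ by $\Psi_x(d_xa):=[a\otimes\mathbf{1}]_x$. The single computation
\[\left\langle [a\otimes\mathbf{1}]_x,[c\otimes\mathbf{1}]_x\right\rangle_{\mathcal{H}_x}=\Gamma_{\mathcal{H},x}(a\otimes\mathbf{1},c\otimes\mathbf{1})=\Gamma_x(a,c)=(d_xa,d_xc)_{\mathcal{B}_x}\]
shows at once that $\Psi_x$ is well defined and isometric on this subspace, so it extends uniquely by continuity to an isometry $\Psi_x\colon\mathcal{B}_x\to\mathcal{H}_x$.

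The key step is surjectivity, which is where the pull-out identity (\ref{E:pullout}) does the work: since $[a\otimes b]_x=b(x)[a\otimes\mathbf{1}]_x=b(x)\Psi_x(d_xa)$, the range of $\Psi_x$ contains every simple tensor class $[a\otimes b]_x$, hence the whole linear span of such classes, which is dense in $\mathcal{H}_x$ by construction. As the isometric image of the complete space $\mathcal{B}_x$ is closed, this forces $\Psi_x(\mathcal{B}_x)=\mathcal{H}_x$. Thus each $\Psi_x$ is an onto isometry, and crucially it is defined for every $x\in X$ with no exceptional null set, since the pull-out identity is a genuinely pointwise statement.

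It remains to equip $(\mathcal{H}_x)_{x\in X}$ with a measurable structure making it a measurable field and the family $(\Psi_x)_{x\in X}$ field-isometric. I would push forward the measurable sections, declaring $\zeta\in\prod_{x\in X}\mathcal{H}_x$ measurable if and only if $x\mapsto\Psi_x^{-1}(\zeta(x))$ lies in the space $\mathcal{M}$ of measurable sections of $(\mathcal{B}_x)_{x\in X}$. Because each $\Psi_x$ is an onto isometry, both axioms transfer verbatim: the countable generating family $x\mapsto d_xf_n$ from Lemma~\ref{L:essentially} maps to $x\mapsto[f_n\otimes\mathbf{1}]_x$, whose fiberwise spans remain dense; and for sections $\zeta,\eta$ one has $\left\langle\zeta(x),\eta(x)\right\rangle_{\mathcal{H}_x}=(\Psi_x^{-1}\zeta(x),\Psi_x^{-1}\eta(x))_{\mathcal{B}_x}$, so measurability condition (i) for $(\mathcal{H}_x)_{x\in X}$ is equivalent to that for $(\mathcal{B}_x)_{x\in X}$. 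Equivalently, one may verify directly that $x\mapsto\left\langle[f_n\otimes\mathbf{1}]_x,[f_m\otimes\mathbf{1}]_x\right\rangle_{\mathcal{H}_x}=\Gamma_x(f_n,f_m)$ is measurable, the measurability being inherited from the chosen Borel versions of the densities. Since no null set is discarded at any point, this produces an honest isometry of the two measurable fields rather than merely an essential one.

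I expect the only genuine obstacle to be surjectivity of the fiberwise maps: one must ensure that adjoining the algebra $\mathcal{B}_b(X)$ in the definition of $\mathcal{H}_x$ does not enlarge the fiber beyond $\mathcal{B}_x$. The pull-out identity (\ref{E:pullout}) resolves this by collapsing the $\mathcal{B}_b(X)$-factor to the scalar $b(x)$ at each point; everything else is routine extension by density together with transport of the measurable structure.
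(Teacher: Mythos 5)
Your proposal is correct and follows essentially the same route as the paper's own proof: the same fiberwise map $\Psi_x(d_xa)=[a\otimes\mathbf{1}]_x$, the same isometry computation $\Gamma_{\mathcal{H},x}(a\otimes\mathbf{1},c\otimes\mathbf{1})=\Gamma_x(a,c)$, and the same use of the pull-out identity (\ref{E:pullout}) plus density of $\mathcal{A}\otimes\mathcal{B}_b(X)/ker\:\Gamma_{\mathcal{H},x}$ to get surjectivity. The only cosmetic difference is that the paper verifies the measurable-field axioms for $(\mathcal{H}_x)_{x\in X}$ directly (as in Lemma \ref{L:essentially}) rather than transporting them through $(\Psi_x)_{x\in X}$, an alternative you also note; both yield the isometry of fields with no exceptional null set.
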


\begin{proof}
The first assertion may be seen as in the previous lemma. For any $x\in X$ define a bilinear mapping $\Psi_x:\mathcal{A}/ker\:\Gamma_x\to\mathcal{H}_x$ by
\begin{equation}\label{E:EberleIso}
\Psi_x(d_xa):=[a\otimes\mathbf{1}]_x,\ \ a\in\mathcal{A}.
\end{equation}
Since
\begin{equation}\label{E:isometric}
\left\|\Psi_x(d_xa)\right\|_{\mathcal{H}_x}^2=\left\|[a\otimes\mathbf{1}]_x\right\|_{\mathcal{H}_x}^2=\Gamma_{\mathcal{H},x}(a\otimes\mathbf{1})=\Gamma_x(a)=\left\|d_xa\right\|_{\mathcal{B}_x}^2
\end{equation}
and $d_x\widetilde{a}=d_xa$ if and only if $\Gamma_x(\widetilde{a}-a)=0$, $\Psi_x$ is well defined. By (\ref{E:isometric}) and denseness it extends to a uniquely determined isometry from $\mathcal{B}_x$ into $\mathcal{H}_x$. $\Psi_x$ is also surjective: For any $[a\otimes b]_x\in \mathcal{A}\otimes\mathcal{B}_b(X)/ker\:\Gamma_{\mathcal{H},x}$ consider $b(x)d_xa$. Then by linearity and (\ref{E:pullout}), $\Psi_x(b(x)d_xa)=b(x)[a\otimes\mathbf{1}]_x=[a\otimes b]_x$. On the other hand, $\mathcal{A}\otimes\mathcal{B}_b(X)/ker\:\Gamma_{\mathcal{H},x}$ is dense in $\mathcal{H}_x$.
\end{proof}

\begin{lemma}
The space $\mathcal{A}\otimes\mathcal{B}_b(X)$ is dense in $\mathcal{H}$.
\end{lemma}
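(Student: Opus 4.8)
The plan is to reduce the statement to the approximation of a single simple tensor and then pass to finite linear combinations. Since $\mathcal{H}$ is by definition the completion of $\mathcal{C}\otimes\mathcal{B}_b(X)$ modulo the kernel of $\left\|\cdot\right\|_\mathcal{H}$, the subspace $\mathcal{C}\otimes\mathcal{B}_b(X)$ is already dense in $\mathcal{H}$. Hence it suffices to show that every element of $\mathcal{C}\otimes\mathcal{B}_b(X)$ can be approximated in $\left\|\cdot\right\|_\mathcal{H}$ by elements of $\mathcal{A}\otimes\mathcal{B}_b(X)$; the claim then follows by the triangle inequality. By bilinearity of the tensor product and of $\left\langle\cdot,\cdot\right\rangle_\mathcal{H}$, it is enough to treat one simple tensor $a\otimes b$ with $a\in\mathcal{C}$ and $b\in\mathcal{B}_b(X)$.

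The key step is a norm estimate. Since $\mathcal{A}_0$ is $\mathcal{E}$-dense in $\mathcal{C}$, so is $\mathcal{A}=\lin(\mathcal{A}_0)$, and I would choose $g_k\in\mathcal{A}$ with $\mathcal{E}(a-g_k)\to 0$. Using (\ref{E:scalarprodH}),
\[
\left\|(a-g_k)\otimes b\right\|_\mathcal{H}^2=\int_X b^2\,d\Gamma(a-g_k),
\]
and I would bound the right-hand side by applying (\ref{E:approxmeasure}) with the bounded nonnegative weight $f=b^2$ to the pair of functions $a-g_k$ and $0$, for which $\Gamma(0)=0$. This gives
\[
\int_X b^2\,d\Gamma(a-g_k)\leq 2\,\Big(\sup_X|b|\Big)^2\,\mathcal{E}(a-g_k)\longrightarrow 0.
\]
Thus $g_k\otimes b\to a\otimes b$ in $\mathcal{H}$, and each $g_k\otimes b$ lies in $\mathcal{A}\otimes\mathcal{B}_b(X)$.

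To conclude, given $\omega\in\mathcal{H}$ and $\varepsilon>0$, I would first pick a finite combination $\tau=\sum_{i=1}^n a_i\otimes b_i\in\mathcal{C}\otimes\mathcal{B}_b(X)$ with $\left\|\omega-\tau\right\|_\mathcal{H}<\varepsilon/2$. Replacing each $a_i$ by a suitable $g_i^{(k)}\in\mathcal{A}$ as above and using the triangle inequality,
\[
\Big\|\tau-\sum_{i=1}^n g_i^{(k)}\otimes b_i\Big\|_\mathcal{H}\leq \sum_{i=1}^n \sqrt{2}\,\Big(\sup_X|b_i|\Big)\,\mathcal{E}(a_i-g_i^{(k)})^{1/2},
\]
which is $<\varepsilon/2$ for $k$ large. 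Then $\sum_{i=1}^n g_i^{(k)}\otimes b_i\in\mathcal{A}\otimes\mathcal{B}_b(X)$ is within $\varepsilon$ of $\omega$.

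I do not expect a serious obstacle here; the only point worth noticing is that the $\mathcal{H}$-seminorm of $a\otimes b$ depends on $a$ only through the energy measure $\Gamma(a)$ weighted by the \emph{bounded} function $b^2$. Consequently approximation in the bare energy seminorm $\mathcal{E}(\cdot)^{1/2}$, which is exactly the mode of density guaranteed for $\mathcal{A}_0$, is sufficient, and no control of the $L_2$-norm or of the full $\mathcal{C}$-norm is needed. The boundedness of the second tensor factor is precisely what allows $\sup_X|b|$ to be pulled out of the estimate.
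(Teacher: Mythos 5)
Your proof is correct and follows exactly the route of the paper, which simply notes that by construction any simple tensor $a\otimes b\in\mathcal{C}\otimes\mathcal{B}_b(X)$ can be approximated by elements of $\mathcal{A}\otimes\mathcal{B}_b(X)$; you have merely made explicit the key estimate $\left\|(a-g_k)\otimes b\right\|_\mathcal{H}^2\leq 2\left(\sup_X|b|\right)^2\mathcal{E}(a-g_k)$ via (\ref{E:approxmeasure}) and the passage to finite linear combinations. No gaps.
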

\begin{proof}
By construction, any simple tensor $a\otimes b\in \mathcal{C}\otimes\mathcal{B}_b(X)$ can be approximated by elements of $\mathcal{A}\otimes\mathcal{B}_b(X)$.
\end{proof}

Recall that given a measurable field of Hilbert spaces $(H_x)_{x\in X}$,  a measurable section $\xi$ is called \emph{square-integrable} if 
\begin{equation}\label{E:squareint}
\int_X\left\|\xi(x)\right\|_{H_x}^2 m(dx)<\infty.
\end{equation}
The set of all square-integrable sections together with the scalar product induced by (\ref{E:squareint}) is called the \emph{direct integral} of $(H_x)_{x\in X}$ and denoted by $\int^\oplus_X H_xm(dx)$.
\begin{remark}
If $\left\lbrace \eta_i:i\in\mathbb{N}\right\rbrace$ is a measurable field of orthonormal bases according to Remark \ref{R:ortho} and $\omega\in H= \int^\oplus_X H_xm(dx)$, then the sections $\omega_n$, given by $$\omega_n(x)=\sum_{i=0}^n (\omega(x),\eta_i(x))_{H_x}\eta_i(x)$$ approximate $\omega$ in $H$. A proof is given in \cite[Proposition II.1.6]{Dix}.
\end{remark} 

Given $a\otimes b\in\mathcal{A}\otimes \mathcal{B}_b$ with corresponding classes $[a\otimes b]_x\in\mathcal{H}_x$, the symbol $[a\otimes b]$ denotes the measurable section $x\mapsto [a\otimes b]_x$. Similarly for more general measurable sections $\omega$.

\begin{theorem}\label{T:coincide}\mbox{}
The Hilbert spaces $\mathcal{H}$ and $\int_X^\oplus \mathcal{H}_x m(dx)$ are isometrically isomorphic. In particular, for all $\omega, \eta\in\mathcal{H}$,
\[\left\langle\omega, \eta\right\rangle_{\mathcal{H}}=\int_X^\oplus \left\langle\omega,\eta\right\rangle_{\mathcal{H}_x}m(dx). \]
\end{theorem}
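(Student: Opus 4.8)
The plan is to realize the isomorphism as the extension by density of the obvious map sending a $1$-form built from $\mathcal{A}\otimes\mathcal{B}_b(X)$ to its associated measurable section of fibers, to verify that this map is norm-preserving using the absolute continuity furnished by Assumption~\ref{A:energydominant}, and then to establish surjectivity via a measurable field of orthonormal bases.

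First I would define $\Phi$ on finite linear combinations $\omega=\sum_i f_i\otimes b_i$ with $f_i\in\mathcal{A}$ and $b_i\in\mathcal{B}_b(X)$ by $\Phi(\omega):=[\omega]$, the section $x\mapsto\sum_i[f_i\otimes b_i]_x\in\mathcal{H}_x$, in the notation fixed just before the theorem. That $[\omega]$ is a square-integrable measurable section follows because, by (\ref{E:scalarfiber}), the map $x\mapsto\langle[\omega]_x,[f_n\otimes\mathbf 1]_x\rangle_{\mathcal{H}_x}$ is Borel (the $\Gamma_x(f_i,f_n)$ being Borel versions and the $b_i$ bounded Borel), and the sections $[f_n\otimes\mathbf 1]$ form a generating family for $(\mathcal{H}_x)_{x\in X}$ through the surjective isometries $\Psi_x$ of Lemma~\ref{L:equal}; square-integrability will be immediate from the isometry identity below.

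Next comes the isometry identity. Using (\ref{E:scalarprodH}), Assumption~\ref{A:energydominant} to write $d\Gamma(f_i,f_j)=\Gamma_x(f_i,f_j)\,m(dx)$, and (\ref{E:scalarfiber}), I would compute
\[\left\|\omega\right\|_{\mathcal{H}}^2=\sum_{i,j}\int_X b_ib_j\,d\Gamma(f_i,f_j)=\int_X\sum_{i,j}b_i(x)b_j(x)\Gamma_x(f_i,f_j)\,m(dx)=\int_X\left\|[\omega]_x\right\|_{\mathcal{H}_x}^2\,m(dx).\]
Thus $\Phi$ preserves norms on $\mathcal{A}\otimes\mathcal{B}_b(X)$, which is dense in $\mathcal{H}$ by the lemma immediately preceding this theorem. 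Since the direct integral is complete and $\mathcal{H}$ is the completion of this subspace, $\Phi$ extends uniquely to an isometry $\Phi:\mathcal{H}\to\int_X^\oplus\mathcal{H}_x\,m(dx)$; the stated inner-product formula then follows by polarization.

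It remains to prove surjectivity, and this is the step I expect to be the main obstacle. Being an isometry into a complete space, $\Phi$ has closed range, so it suffices to show the range is dense. I would fix a measurable field of orthonormal bases $\left\lbrace\eta_i\right\rbrace_i$ as in Remark~\ref{R:ortho}, built from the generating sections $[f_n\otimes\mathbf 1]$, so that each $\eta_i(x)=\sum_j\alpha_{ij}(x)[f_j\otimes\mathbf 1]_x$ is a finite combination with measurable coefficients. Given $\omega$ in the direct integral, the truncated sections $\omega_n(x)=\sum_{i=0}^n\langle\omega(x),\eta_i(x)\rangle_{\mathcal{H}_x}\eta_i(x)$ converge to $\omega$ by the approximation remark following the definition of the direct integral. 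Collecting coefficients I would rewrite $\omega_n(x)=\sum_j\beta_j^{(n)}(x)[f_j\otimes\mathbf 1]_x$ with measurable $\beta_j^{(n)}$ over a finite index set, and by (\ref{E:pullout}) this is fiberwise $\Phi\big(\sum_j f_j\otimes\beta_j^{(n)}\big)$ provided the $\beta_j^{(n)}$ lie in $\mathcal{B}_b(X)$. Replacing each $\beta_j^{(n)}$ by its truncation to $\left\lbrace|\beta_j^{(n)}|\leq M\right\rbrace$ produces genuine elements of $\mathcal{A}\otimes\mathcal{B}_b(X)$, and letting $M\to\infty$ while invoking dominated convergence in the direct integral shows $\omega_n$ lies in the closure of the range; hence so does $\omega$. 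The delicate points are purely the measurability and integrability bookkeeping for the coefficient functions $\beta_j^{(n)}$ and the verification that these truncations converge in the direct integral norm; once these are in place, density of the range completes the proof that $\Phi$ is an isometric isomorphism.
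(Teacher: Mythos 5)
Your construction of the map, the measurability check, and the isometry computation are essentially identical to the paper's proof: the paper defines $\chi(a\otimes b):=[a\otimes b]$, verifies $\left\|[a\otimes b]\right\|^2_{\int^\oplus}=\int_X b(x)^2\Gamma_x(a)\,m(dx)=\left\|a\otimes b\right\|^2_{\mathcal{H}}$ using Assumption \ref{A:energydominant}, and extends by density using the same lemma you cite. Where you genuinely diverge is surjectivity. The paper uses a totality argument borrowed from Eberle: if $\omega$ in the direct integral is orthogonal to every $[a\otimes b]$ with $a\in\mathcal{A}_0$, $b\in\mathcal{B}_b(X)$, then since $b$ is arbitrary, $\left\langle \omega(x),[a\otimes\mathbf{1}]_x\right\rangle_{\mathcal{H}_x}=0$ for $m$-a.e.\ $x$ and every $a\in\mathcal{A}_0$; rational linear combinations of the $[a_i\otimes\mathbf{1}]_x$ are dense in each fiber $\mathcal{H}_x$, so $\omega=0$, and the closed range must be everything. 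This is short and sidesteps all measurable-selection bookkeeping. Your constructive route via Remark \ref{R:ortho} and the truncated orthonormal expansions $\omega_n$ is viable and has the merit of producing explicit approximants, but as written it has two concrete weak points.

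First, your claim that $\omega_n(x)=\sum_j\beta_j^{(n)}(x)[f_j\otimes\mathbf{1}]_x$ holds over a \emph{finite} index set is not correct globally: each $\eta_i(x)$ is a finite combination of generators, but the set of generators needed (and hence the number of nonzero $\beta_j^{(n)}$) depends on $x$ through the measurable partition underlying the Gram--Schmidt construction, and need not be uniformly bounded. You must exhaust $X$ by measurable sets $X_N$ on which generators with index $\leq N$ suffice. Second, and more seriously, your dominated convergence step fails with per-coefficient truncation: the generating sections $[f_j\otimes\mathbf{1}]_x$ are not orthogonal in $\mathcal{H}_x$, so the error $\sum_{j:|\beta_j^{(n)}(x)|>M}\beta_j^{(n)}(x)[f_j\otimes\mathbf{1}]_x$ is a partial sum whose fiber norm is not controlled by $\left\|\omega_n(x)\right\|_{\mathcal{H}_x}$, and no integrable dominating function is available. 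The repair is to truncate jointly rather than coefficientwise: multiply the whole section $\omega_n$ by $\mathbf{1}_{A_{N,M}}$ where $A_{N,M}=X_N\cap\left\lbrace \max_{j\leq N}|\beta_j^{(n)}|\leq M\right\rbrace$. Then $\mathbf{1}_{A_{N,M}}\omega_n=\Phi\bigl(\sum_{j\leq N}f_j\otimes(\mathbf{1}_{A_{N,M}}\beta_j^{(n)})\bigr)$ lies in the range, and the error satisfies $\left\|\omega_n(x)-\mathbf{1}_{A_{N,M}}(x)\omega_n(x)\right\|^2_{\mathcal{H}_x}=\mathbf{1}_{A_{N,M}^c}(x)\left\|\omega_n(x)\right\|^2_{\mathcal{H}_x}\leq \mathbf{1}_{A_{N,M}^c}(x)\left\|\omega(x)\right\|^2_{\mathcal{H}_x}$, which is dominated by the integrable function $\left\|\omega(\cdot)\right\|^2_{\mathcal{H}_\cdot}$ and tends to zero a.e. With these two repairs your argument goes through; without them, the surjectivity step has a genuine gap that the paper's orthogonality argument avoids entirely.
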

Consequently also $\mathcal{H}$ and $\int_X^\oplus \mathcal{B}_x m(dx)$ are isometrically isomorphic. In particular, up to an isomorphism, the definition of $1$-forms in \cite[Chapter 3 b) and Appendix D]{Eb99} arises as a special case of that in \cite{CS03,CS09}. 

\begin{proof}
For simple tensors $a\otimes b\in\mathcal{A}\otimes \mathcal{B}_b(X)$ set $\chi(a\otimes b):=[a\otimes b]$ and extend linearly to a mapping $\chi:\mathcal{A}\otimes \mathcal{B}_b(X)\to \int_X^\otimes (\omega,\eta)_{\mathcal{H}_x}m(dx)$. Since 
\[
\int_X\left\|[a\otimes b]_x\right\|_{\mathcal{H}_x}m(dx)=\int_Xb(x)^2 \left\|[a\otimes \mathbf{1}]_x\right\|_{\mathcal{H}_x}m(dx)=\int_Xb(x)^2\Gamma_x(a)m(dx)=\left\|a\otimes b\right\|_\mathcal{H}^2,\]
By denseness $\chi$ extends to an isometry from $\mathcal{H}$ into $\int_X^\otimes \mathcal{H}_x m(dx)$. To conclude surjectivity we make use of a totality argument from \cite[Theorem 7.3.11]{Eb99}. Suppose $\omega\in \int_X^\oplus \mathcal{H}_x m(dx)$ is such that 
\[0=\left\langle\omega,[a\otimes b]\right\rangle_{\mathcal{H}}=\int_X b(x)\left\langle\omega(x),[a\otimes \mathbf{1}]_x\right\rangle_{\mathcal{H}_x}m(dx)\]
for all $a\otimes b\in\mathcal{A}\otimes \mathcal{B}_b(X)$. Then in particular $\left\langle\omega(x),[a\otimes \mathbf{1}]_x\right\rangle_{\mathcal{H}_x}=0$ for all $a\in\mathcal{A}_0$ for $m$-a.e. $x$. But finite linear combinations $\sum_i\lambda_i [a_i\otimes \mathbf{1}]_x$ with functions $a_i\in\mathcal{A}_0$ and rational coefficients $\lambda_i$ are dense in the Hilbert space $\mathcal{H}_x$, therefore $\omega(x)=0$ for $m$-a.e. $x$ and consequently $\omega=0$ in $\int_X^\oplus \mathcal{H}_x m(dx)$. This implies that  the closure of the range $Im\:\chi$ of $\chi$ must be the entire direct integral.
\end{proof}

Let us agree upon the notation 
\begin{equation}\label{E:nicemapping}
\Gamma_{\mathcal{H},x}(\omega,\eta):=\left\langle\omega,\eta\right\rangle_{\mathcal{H}_x}\ \ \text{ for all $\omega, \eta\in\mathcal{H}$ and $x\in X$.} 
\end{equation}
Analogs of Lemma \ref{L:nonnegmeasure} and Corollary \ref{C:measure} now read as follows.

\begin{corollary}\mbox{}
\begin{enumerate}
\item[(i)] The measure $\Gamma_{\mathcal{H}}(\omega,\eta)$ from Lemma \ref{L:nonnegmeasure} is absolutely continuous with respect to $m$, and $\Gamma_{\mathcal{H},\cdot}(\omega,\eta)$ is a version of the Radon-Nikodym density $\frac{d\Gamma_{\mathcal{H}}(\omega,\eta)}{dm}$.
\item[(ii)] Definition (\ref{E:nicemapping}) provides a well defined and uniquely determined bilinear mapping $\Gamma_{\mathcal{H}}:\mathcal{H}\times\mathcal{H}\to L_1(X,m)$ such that for any $\omega\in\mathcal{H}$, $\Gamma_\mathcal{H,\cdot}(\omega)\geq 0$ $m$-a.e. 
\end{enumerate}
\end{corollary}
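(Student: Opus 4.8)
The plan is to establish the two parts of the corollary as direct consequences of the isometric isomorphism in Theorem \ref{T:coincide} together with the disintegration-type identity for $\Gamma_{\mathcal{H}}$ from Lemma \ref{L:nonnegmeasure}. For part (i), the strategy is to show that for any non-negative $\varphi\in C_0(X)$ the two sides of the candidate identity agree. First I would use the representation of $\Gamma_{\mathcal{H}}(\omega)$ via approximation by finite linear combinations $\omega_k$ as in the proof of Lemma \ref{L:nonnegmeasure}, recalling that $\int_X\varphi\,d\Gamma_{\mathcal{H}}(\omega_k)=\left\|\omega_k\sqrt{\varphi}\right\|_{\mathcal{H}}^2$. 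On the other hand, by Theorem \ref{T:coincide} applied to the $1$-form $\omega\sqrt{\varphi}$ (using that $\sqrt{\varphi}\in C_0(X)\subset\mathcal{B}_b(X)$ acts on $\mathcal{H}$ via the right action (\ref{E:right})), we have
\[\left\|\omega\sqrt{\varphi}\right\|_{\mathcal{H}}^2=\int_X\left\|(\omega\sqrt{\varphi})(x)\right\|_{\mathcal{H}_x}^2 m(dx).\]
The key computation is then that the fiberwise norm of $\omega\sqrt{\varphi}$ equals $\varphi(x)\,\Gamma_{\mathcal{H},x}(\omega)$ pointwise, which follows from the pull-out property (\ref{E:pullout}) at the level of simple tensors and extends by the isometry of $\chi$. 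Combining these yields
\[\int_X\varphi\,d\Gamma_{\mathcal{H}}(\omega)=\int_X\varphi(x)\,\Gamma_{\mathcal{H},x}(\omega)\,m(dx)\]
for all non-negative $\varphi\in C_0(X)$, and since both sides are positive linear functionals agreeing on a measure-determining class, the measures $\Gamma_{\mathcal{H}}(\omega)$ and $\Gamma_{\mathcal{H},\cdot}(\omega)\,m$ coincide. This identifies $\Gamma_{\mathcal{H},\cdot}(\omega)$ as a version of the Radon-Nikodym density and in particular shows $\Gamma_{\mathcal{H}}(\omega)\ll m$.

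For part (ii), I would argue that (\ref{E:nicemapping}) is well defined by first fixing $x$ and observing that $(\omega,\eta)\mapsto\left\langle\omega,\eta\right\rangle_{\mathcal{H}_x}$ is manifestly bilinear and symmetric on $\mathcal{H}$ for each $x$, with $\Gamma_{\mathcal{H},x}(\omega)=\left\|\omega(x)\right\|_{\mathcal{H}_x}^2\geq 0$. The non-trivial point is measurability and integrability: one must check that $x\mapsto\Gamma_{\mathcal{H},x}(\omega,\eta)$ is a genuine (class of) measurable function lying in $L_1(X,m)$, so that the mapping into $L_1$ is meaningful. Measurability is immediate from the definition of the direct integral, since $\omega,\eta$ are by Theorem \ref{T:coincide} measurable sections and the fiber inner products of measurable sections are measurable functions of $x$; integrability and the $L_1$-membership follow from part (i) together with the polarization bound $|\Gamma_{\mathcal{H},x}(\omega,\eta)|\leq\Gamma_{\mathcal{H},x}(\omega)^{1/2}\Gamma_{\mathcal{H},x}(\eta)^{1/2}$ and Cauchy-Schwarz, which gives $\Gamma_{\mathcal{H},\cdot}(\omega,\eta)\in L_1(X,m)$ with $\left\|\Gamma_{\mathcal{H},\cdot}(\omega,\eta)\right\|_{L_1}\leq\left\|\omega\right\|_{\mathcal{H}}\left\|\eta\right\|_{\mathcal{H}}$. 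The non-negativity assertion $\Gamma_{\mathcal{H},\cdot}(\omega)\geq0$ $m$-a.e. is then exactly the fiberwise positivity just noted.

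I expect the main obstacle to be the bookkeeping around well-definedness as a map into $L_1$ rather than into pointwise functions: the fiber spaces $\mathcal{H}_x$ and hence the values $\Gamma_{\mathcal{H},x}(\omega)$ depend on the chosen Borel versions $\Gamma_x(f_i,f_j)$, and by Lemma \ref{L:essentially}(ii) different choices give essentially isometric fields, i.e.\ agreement only up to an $m$-null set. Thus $\Gamma_{\mathcal{H},\cdot}(\omega)$ is only determined as an $m$-equivalence class, which is precisely why the target space in (ii) is $L_1(X,m)$; I would take care to phrase the uniqueness claim as uniqueness of this class. The only genuinely computational step is verifying the pointwise identity $\left\|(\omega\sqrt{\varphi})(x)\right\|_{\mathcal{H}_x}^2=\varphi(x)\,\Gamma_{\mathcal{H},x}(\omega)$, which for simple tensors is immediate from (\ref{E:scalarfiber}) and (\ref{E:pullout}) and then passes to general $\omega$ by the continuity of $\chi$ and a subsequence-along-which-convergence-is-a.e.\ argument; this is routine given the machinery already developed.
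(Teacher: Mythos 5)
Your proposal is correct and follows essentially the same route as the paper, which disposes of (i) as ``obvious'' and of (ii) as a direct consequence of Lemmas \ref{L:essentially} and \ref{L:equal}: you are simply making explicit the identification $\left\|\omega\sqrt{\varphi}\right\|_{\mathcal{H}}^2=\int_X\varphi(x)\left\|\omega(x)\right\|_{\mathcal{H}_x}^2m(dx)$ furnished by Theorem \ref{T:coincide} (together with the pull-out property (\ref{E:pullout}) and the compatibility of the right action with pointwise multiplication), and the role of Lemma \ref{L:essentially}(ii) in pinning down the density only as an $m$-equivalence class. The details you supply (Riesz uniqueness for the measure identity, Cauchy--Schwarz for $L_1$-membership, the subsequence-a.e.\ argument) are exactly the ones the paper leaves implicit.
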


\begin{proof} (i) is obvious and (ii) is a simple consequence of Lemmas \ref{L:essentially} and \ref{L:equal}.
\end{proof}

\begin{corollary}\mbox{}
\begin{enumerate}
\item[(i)] If $\omega\in \mathcal{H}$ is such that $\left\|\omega\right\|_{\mathcal{H}}=0$,
then $\Gamma_{\mathcal{H},\cdot}(\omega)=0$ in $L_1(X,m)$. 
\item[(ii)] For $\omega, \eta\in\mathcal{H}$ with disjoint supports we have $\Gamma_{\mathcal{H}}(\omega,\eta)=0$ in $L_1(X,m)$. 
\end{enumerate}
\end{corollary}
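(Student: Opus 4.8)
The plan is to reduce both assertions to their measure-theoretic analogues in Corollary~\ref{C:measure}, using as the only bridge the Radon--Nikodym identification from the immediately preceding corollary. Recall that the latter asserts, for all $\omega,\eta\in\mathcal{H}$, that the signed measure $\Gamma_{\mathcal{H}}(\omega,\eta)$ of Lemma~\ref{L:nonnegmeasure} is absolutely continuous with respect to $m$ and that the fiberwise function $\Gamma_{\mathcal{H},\cdot}(\omega,\eta)$ from (\ref{E:nicemapping}) is a version of the density $\frac{d\Gamma_{\mathcal{H}}(\omega,\eta)}{dm}$. Consequently, whenever a weighted energy measure vanishes in $\mathcal{M}(X)$, the corresponding fiberwise density vanishes $m$-almost everywhere, that is, in $L_1(X,m)$.

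For (i), I would invoke Corollary~\ref{C:measure}~(i): from $\left\|\omega\right\|_{\mathcal{H}}=0$ it gives $\Gamma_{\mathcal{H}}(\omega)=0$ as an element of $\mathcal{M}(X)$. Since $\Gamma_{\mathcal{H},\cdot}(\omega)$ is a version of $\frac{d\Gamma_{\mathcal{H}}(\omega)}{dm}$ and the measure is the zero measure, its density is $0$ $m$-a.e., whence $\Gamma_{\mathcal{H},\cdot}(\omega)=0$ in $L_1(X,m)$. For (ii), the supports of $\omega$ and $\eta$ are by definition the supports of the measures $\Gamma_{\mathcal{H}}(\omega)$ and $\Gamma_{\mathcal{H}}(\eta)$, so the disjointness hypothesis is precisely that of Corollary~\ref{C:measure}~(ii); that corollary yields $\Gamma_{\mathcal{H}}(\omega,\eta)=0$ in $\mathcal{M}(X)$. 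Passing once more to the density shows $\Gamma_{\mathcal{H},\cdot}(\omega,\eta)=0$ $m$-a.e., which is the asserted vanishing in $L_1(X,m)$.

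Both steps are direct transcriptions of measure identities into density identities, so I do not anticipate any genuine obstacle. The only point deserving a moment's care is to confirm that the notion of support entering the hypothesis of (ii) is the one attached to the weighted energy measure $\Gamma_{\mathcal{H}}(\omega)$, as fixed in the remark following Lemma~\ref{L:nonnegmeasure}, so that Corollary~\ref{C:measure}~(ii) applies verbatim.
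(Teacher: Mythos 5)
Your proposal is correct and matches the paper's intent: the paper states this corollary without any proof, treating it as an immediate consequence of the preceding corollary (which identifies $\Gamma_{\mathcal{H},\cdot}(\omega,\eta)$ as a version of the Radon--Nikodym density $\frac{d\Gamma_{\mathcal{H}}(\omega,\eta)}{dm}$) together with Corollary~\ref{C:measure}, and your argument spells out exactly this reduction, including the correct reading of ``support'' as the support of the weighted energy measure $\Gamma_{\mathcal{H}}(\omega)$.
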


As in \cite{CS03,CS09} a \emph{differential} $\partial:\mathcal{C}\to\mathcal{H}$ is defined by 
\[\partial(a)=a\otimes \mathbf{1} \ \ ,\ \ a\in\mathcal{C}.\]
The following properties are simple consequences of (\ref{E:scalarprodH}) and (\ref{E:left}). 

\begin{corollary}\label{C:simplecons}\mbox{}
\begin{enumerate}
\item[(i)] The operator $\partial$ is a derivation, i.e. it is linear and
\[\partial(fg)=(\partial f)g+f\partial g\ \ , \ \ f,g\in\mathcal{C}.\]
\item[(ii)] The operator $\partial$ is bounded, more precisely,
\[\mathcal{E}(f)\leq \left\|\partial f\right\|_{\mathcal{H}}^2\leq 2\mathcal{E}(f)\ \ , \ \ f\in\mathcal{C},\]
and if (\ref{E:desirable}) holds, we have $\left\|\partial f\right\|_{\mathcal{H}}^2=\mathcal{E}(f)$.
\end{enumerate}
\end{corollary}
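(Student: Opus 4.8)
The plan is to handle (i) and (ii) separately, reducing both to the defining relations (\ref{E:scalarprodH}), (\ref{E:left}), (\ref{E:right}) and to a comparison between the total mass of the energy measure and $\mathcal{E}$. For (i), linearity of $\partial$ is immediate from bilinearity of the tensor product, since $\partial a=a\otimes\mathbf{1}$. For the Leibniz rule I would expand the right-hand side using the bimodule structure: by the right action (\ref{E:right}), $(\partial f)g=(f\otimes\mathbf{1})g=f\otimes g$, while by the left action (\ref{E:left}), $f\,\partial g=f(g\otimes\mathbf{1})=(fg)\otimes\mathbf{1}-f\otimes g$. Adding these, the terms $f\otimes g$ cancel and one is left with $(fg)\otimes\mathbf{1}=\partial(fg)$. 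I would stress that this uses only the module identities, not locality, so it is valid for every regular Dirichlet form.

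For (ii) the starting point is the identity $\|\partial f\|_{\mathcal{H}}^{2}=\langle f\otimes\mathbf{1},f\otimes\mathbf{1}\rangle_{\mathcal{H}}=\int_X\mathbf{1}\,d\Gamma(f)=\Gamma(f)(X)$, read off directly from (\ref{E:scalarprodH}) (equivalently, $\Gamma_{\mathcal{H}}(\partial f)(X)=\|\partial f\|_{\mathcal{H}}^{2}$ by Lemma \ref{L:nonnegmeasure}). Thus the entire statement reduces to comparing the total mass $\Gamma(f)(X)$ with $\mathcal{E}(f)$, and the equality case is precisely (\ref{E:desirable}): with no killing part, $\Gamma(f)(X)=\mathcal{E}(f)$, hence $\|\partial f\|_{\mathcal{H}}^{2}=\mathcal{E}(f)$. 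For the general two-sided bound I would invoke the Beurling--Deny decomposition of $\mathcal{E}$ into its strongly local, jumping and killing parts; inserting $g=h=f$ and a test function into (\ref{E:energymeasure}) and comparing termwise shows that the local and jumping parts contribute their full energy to $\Gamma(f)(X)$ while the killing measure $k$ contributes only half, i.e. $\Gamma(f)(X)=\mathcal{E}(f)-\tfrac12\int_X f^{2}\,dk$. Since $0\le\int_X f^{2}\,dk\le\mathcal{E}(f)$, this yields the asserted equivalence of $\|\partial f\|_{\mathcal{H}}^{2}$ and $\mathcal{E}(f)$ up to the factor $2$, with equality exactly when the killing part is absent.

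The step I expect to be the main obstacle is the rigorous identification of $\Gamma(f)(X)$ in terms of $\mathcal{E}(f)$ and $k$. As $X$ need not be compact, $\mathbf{1}\notin\mathcal{C}$, so $\Gamma(f)(X)$ must be obtained as $\lim_n\int_X\varphi_n\,d\Gamma(f)$ along $\varphi_n\in\mathcal{C}$ with $0\le\varphi_n\uparrow\mathbf{1}$ (available by regularity). One then passes to the limit in $\mathcal{E}(\varphi_n f,f)-\tfrac12\mathcal{E}(f^{2},\varphi_n)$ and must recognise $\lim_n\mathcal{E}(f^{2},\varphi_n)=\int_X f^{2}\,dk$; pinning down the exact constant in front of the killing term is the delicate point, everything else being routine manipulation of (\ref{E:scalarprodH}), (\ref{E:left}) and (\ref{E:right}).
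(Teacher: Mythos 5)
Your part (i) is correct and is precisely the argument the paper intends (the paper gives no written proof, calling both items simple consequences of (\ref{E:scalarprodH}) and (\ref{E:left})): the right action (\ref{E:right}) gives $(\partial f)g=f\otimes g$, the left action (\ref{E:left}) gives $f\,\partial g=(fg)\otimes\mathbf{1}-f\otimes g$, and the sum telescopes to $(fg)\otimes\mathbf{1}=\partial(fg)$; your observation that no locality is used is also right. For part (ii), your reduction $\left\|\partial f\right\|_{\mathcal{H}}^2=\Gamma(f)(X)$ and the Beurling--Deny bookkeeping are the natural (and essentially unavoidable) route, and your formula $\Gamma(f)(X)=\mathcal{E}(f)-\tfrac12\int_X f^2\,dk$ is the correct one.

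The gap is in the last step of (ii), exactly where you declare the corollary proved. Your formula, combined with $0\le\int_X f^2\,dk\le\mathcal{E}(f)$, gives $\tfrac12\mathcal{E}(f)\le\left\|\partial f\right\|_{\mathcal{H}}^2\le\mathcal{E}(f)$, which is the \emph{transpose} of the chain asserted in the corollary, not the chain itself. The asserted lower bound $\mathcal{E}(f)\le\left\|\partial f\right\|_{\mathcal{H}}^2$ is in fact false whenever the killing part is nontrivial: take $\mathcal{E}(f,g)=\int_X fg\,d\mu$ with $\mathcal{F}=L_2(X,\mu)$, a regular Dirichlet form of pure killing type; then (\ref{E:energymeasure}) yields $\Gamma(f)=\tfrac12 f^2\mu$, so $\left\|\partial f\right\|_{\mathcal{H}}^2=\tfrac12\mathcal{E}(f)<\mathcal{E}(f)$. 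So your computation does not prove the statement as printed --- it disproves its first inequality, and the phrase ``yields the asserted equivalence up to the factor $2$'' conceals the reversal rather than resolving it. What your argument does establish, namely $\left\|\partial f\right\|_{\mathcal{H}}^2\le\mathcal{E}(f)\le 2\left\|\partial f\right\|_{\mathcal{H}}^2$ with equality throughout under (\ref{E:desirable}), is also exactly what the paper relies on afterwards: two-sided comparability gives closability of $\partial$, and the upper estimate gives $\left\|\partial u\right\|_{\mathcal{H}}\le\sqrt{2}\,\mathcal{E}(u)^{1/2}$ as invoked in Section \ref{S:gradient}. A correct write-up must therefore either prove the transposed inequalities and flag the printed ones as misstated, or (equivalently) note that the displayed chain in the corollary holds only in the killing-free case, where all the quantities coincide. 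Your identification of $\lim_n\mathcal{E}(f^2,\varphi_n)=\int_X f^2\,dk$ as the delicate technical point is fair; that limit is the standard computation behind the Beurling--Deny representation of energy measures in \cite{FOT94}.
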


On the other hand, Eberle \cite{Eb99} calls a linear map $d$ from an algebra $C$ into a direct integral $\int_X^\oplus H_xm(dx)$ of Hilbert spaces an \emph{$L_2$-differential} if 
\begin{enumerate}
\item[(i)] the span of $\left\lbrace fdg: f,g\in C\right\rbrace$ is dense in $\int_X^\oplus H_xm(dx)$ and
\item[(ii)] $\partial(fg)=fdg+gdf$, $f,g\in C$.
\end{enumerate}

Recall (\ref{E:EberleAnsatz}) and (\ref{E:EberleIso}). The following result is immediate.

\begin{corollary} The operator $\partial$ is an $L_2$-differential on $\mathcal{C}$. Given $f,g\in\mathcal{A}$, we have $[\partial f]_x=\Psi_x(d_xf)$ and
\[\left\langle \partial f,\partial g\right\rangle_{\mathcal{H}}=\int_X^ \oplus (d_xf,d_xg)_{\mathcal{B}_x}m(dx).\]
\end{corollary}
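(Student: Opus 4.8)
The plan is to read off all three assertions from the identification $\mathcal{H}\cong\int_X^\oplus\mathcal{H}_x\,m(dx)$ of Theorem~\ref{T:coincide} together with the fiberwise isometry $\Psi_x\colon\mathcal{B}_x\to\mathcal{H}_x$ of Lemma~\ref{L:equal}, so that essentially no new computation is needed; the work is entirely bookkeeping about how the module structure survives the isomorphism.

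First I would dispatch the two displayed identities, which are the genuinely immediate parts. For $f\in\mathcal{A}$ the section $[\partial f]$ has fiber value $[\partial f]_x=[f\otimes\mathbf{1}]_x$ directly from the definition $\partial f=f\otimes\mathbf{1}$, and this is precisely $\Psi_x(d_xf)$ by the definition (\ref{E:EberleIso}) of $\Psi_x$; hence $[\partial f]_x=\Psi_x(d_xf)$. For the scalar product I would apply Theorem~\ref{T:coincide} to write $\langle\partial f,\partial g\rangle_{\mathcal{H}}=\int_X\langle\partial f,\partial g\rangle_{\mathcal{H}_x}\,m(dx)$ and then, using that $\Psi_x$ is an isometry (Lemma~\ref{L:equal}) and the identity just obtained, compute fiberwise $\langle\partial f,\partial g\rangle_{\mathcal{H}_x}=\langle\Psi_x(d_xf),\Psi_x(d_xg)\rangle_{\mathcal{H}_x}=(d_xf,d_xg)_{\mathcal{B}_x}$, which is the claimed formula.

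It remains to verify the two defining properties of an $L_2$-differential. The Leibniz property (ii) is exactly Corollary~\ref{C:simplecons}(i). For the density (i) the key observation is the pullout identity (\ref{E:pullout}): multiplying the section $[\partial g]=[g\otimes\mathbf{1}]$ pointwise by $f$ yields $f(x)[g\otimes\mathbf{1}]_x=[g\otimes f]_x$, so under the isomorphism of Theorem~\ref{T:coincide} the products $f\partial g$ correspond precisely to the simple tensors $g\otimes f\in\mathcal{H}$. Since $\mathcal{C}\otimes\mathcal{C}$ is dense in $\mathcal{H}$ by Remark~\ref{R:CC}(ii), the span of $\{f\partial g:f,g\in\mathcal{C}\}$ is dense in $\int_X^\oplus\mathcal{H}_x\,m(dx)$, establishing (i). Alternatively one can rerun the totality argument from the proof of Theorem~\ref{T:coincide}: a section $\omega$ orthogonal to every $[g\otimes f]$ is, for $m$-a.e.\ $x$, orthogonal to the total family $\{[a\otimes\mathbf{1}]_x:a\in\mathcal{A}_0\}$, forcing $\omega=0$.

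The step I expect to demand the most care — and hence the main obstacle — is fixing the precise meaning of the module action hidden in the symbol $f\partial g$, since the abstract left action (\ref{E:left}) on $\mathcal{H}$ and pointwise multiplication on the direct integral need not coincide in general. The density argument sidesteps this because (\ref{E:pullout}) extracts the coefficient as the scalar $f(x)$ and reduces everything to the already-established density of $\mathcal{C}\otimes\mathcal{C}$; one should, for rigor, first carry out the reduction with $f,g\in\mathcal{A}$ and then pass to $\mathcal{C}$ by $\mathcal{E}$-density and continuity of the actions. In the local case there is nothing further to check, as left and right multiplications agree, so the identification of $f\partial g$ with $g\otimes f$ and the compatibility of the Leibniz rule with the fiberwise structure become automatic.
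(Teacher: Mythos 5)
Your proposal is correct and follows exactly the route the paper intends: the paper offers no proof beyond declaring the result ``immediate'' after Theorem~\ref{T:coincide}, Lemma~\ref{L:equal}, Corollary~\ref{C:simplecons} and (\ref{E:pullout}), and your bookkeeping (fiber identity from $\partial f=f\otimes\mathbf{1}$ and (\ref{E:EberleIso}), scalar product from the isometries, Leibniz from Corollary~\ref{C:simplecons}(i), density of $\lin\left\lbrace f\partial g\right\rbrace$ from the pullout identity and Remark~\ref{R:CC}(ii)) is precisely what that ``immediate'' compresses. Your closing observation about the meaning of $f\partial g$ --- that the density condition should be read through the right action/pointwise multiplication, which works without locality, while a fully localized Leibniz rule would need locality --- is a point of care the paper itself glosses over, so no gap.
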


\begin{examples}\label{Ex:three}
In the situation of Examples \ref{Ex:one} (ii) and \ref{Ex:two} the space $\mathcal{H}$ is isometrically isomorphic to the Hilbert space $L_2(M,dvol, T^\ast M)$ of $L_2$-differential $1$-forms, and the restriction of $\partial$ to $C^\infty(M)$ coincides with the classical exterior derivative $d$. 
\end{examples}

\begin{remark}\mbox{}
\begin{enumerate}
\item[(i)] Similar assumptions as in \cite{Eb99} would allow to extend formula (\ref{E:EberleAnsatz}) to the entire algebra $\mathcal{C}$, such that each element $f\in\mathcal{C}$ can be assigned classes $d_xf\in\mathcal{B}_x$, $x\in X$. Then, if $df$ denotes the measurable vector field $x\mapsto d_xf$, $f\in\mathcal{C}$, the resulting mapping 
\[d:\mathcal{C}\to \int_X^\oplus \mathcal{B}_xm(dx)\]
defines an $L_2$-differential. In this case also (\ref{E:EberleIso}) extends to all of $\mathcal{C}$ and yields an isometry
$\Psi=\int_X^\oplus \Psi_xm(dx)$ taking  $\int_X\mathcal{B}_x m(dx)$ onto $\mathcal{H}$ such that $\partial=\Psi\circ d$. Note that this is closely related to the representation 
$$\mathcal{H}=L_2(X,m,(\mathcal{H}_x)_{x\in X})$$ 
discussed in detail in Sections \ref{S:gradient} and \ref{S:Sobo} below (see also Theorem~\ref{T:coincide}). 

\item[(ii)] If $(\mathcal{E},\mathcal{F})$ is strongly local, then we have $\Gamma_x(fg,h)=f(x)\Gamma_x(g,h)+g(x)\Gamma_x(f,h)$
for all $f,g,h\in\mathcal{C}$ by the Leibniz rule for energy measures \cite[Lemma 3.2.5]{FOT94}. This implies the \emph{localized Leibniz rule}
$d_x(fg)=f(x)d_xg+g(x)d_xf$. See for instance \cite[p. 151]{Eb99} or \cite[p. 112]{CS03}.

\item[(iii)] For the measurable field $(\mathcal{H}_x)_{x\in X}$ the function $x\mapsto d(x)=\dim \mathcal{H}_x$ from Remark \ref{R:ortho} coincides with the \emph{pointwise index} of $(\mathcal{E},\mathcal{F})$ as introduced by Hino in \cite{Hino10} (also related to the \emph{martingale dimension of fractals}, see \cite{Hino08}). There a detailed analysis of pointwise and global indices is provided and applied to first order derivatives of energy finite functions on a class of fractals.
\end{enumerate}
\end{remark}

\begin{remark}
The above construction has utilized the energy measures (\ref{E:energymeasure}) to generate a related algebraic structure. We would like to remind the reader of the well known fact that they also generate metric structures: Given a symmetric strongly local regular Dirichlet form $(\mathcal{E},\mathcal{F})$, consider
\begin{equation}\label{E:CC}
d(x,y):=\sup\left\lbrace f(x)-f(y): f\in\widetilde{\mathcal{C}}, \Gamma(f)\leq \mu\right\rbrace, \ \ \ x,y\in X,
\end{equation}
where $\widetilde{\mathcal{C}}$ is a core of $(\mathcal{E},\mathcal{F})$ and $\Gamma(f)\leq \mu$ stands for the requirement that $\Gamma(f)$ is absolutely continuous with respect to $\mu$ having density $\frac{\Gamma(f)}{d\mu}\leq 1$ $\mu$-a.e. Formula (\ref{E:CC}) provides a pseudo-metric $d$ on $X$, usually referred to as \emph{Carnot-Caratheodory distance} or \emph{intrinsic distance}. If $\widetilde{\mathcal{C}}$ separates the points of $X$, $d$ is a metric in the wide sense (i.e. satisfies the axioms of a metric but may attain the value $+\infty$). To our knowledge, (\ref{E:CC}) has first been considered in the context of Dirichlet forms in \cite{BM91,BM95,D89} and \cite{Sturm94,Sturm95}. Under the assumptions that $(X,d)$ is complete and the topology induced by $d$ on $X$ coincides with the original one, it had been shown in \cite{Sturm94} (together with \cite{Sturm95}) that $(X,d)$ is a geodesic space. In \cite{Stoll10} the completeness assumption had been dropped. Having in mind the constructions of the present paper, it would be interesting to know whether (or for which cores $\widetilde{\mathcal{C}}$) $(X,d)$ is a geodesic space without any further topological assumptions.
\end{remark}

\section{Vector fields, gradient and divergence}\label{S:gradient}

As a Hilbert space $\mathcal{H}$ is self-dual. We therefore regard $1$-forms also as \emph{vector fields}, exact $1$-forms $\partial f$ also \emph{gradients} and $\partial$ as the \emph{gradient operator}. As $\mathcal{C}$ is dense in $\mathcal{F}$ which in turn is dense in $L_2(X,\mu)$, $\partial$ may be viewed as densely defined unbounded operator 
\[\partial: L_2(X,\mu)\to\mathcal{H}\]
a priori equipped with the domain $dom\:\partial =\mathcal{C}$. As $(\mathcal{E},\mathcal{F})$ is a Dirichlet form, $\partial$ is closable by Corollary \ref{C:simplecons} and extends uniquely to a closed linear operator $\partial$ with domain $\mathcal{F}$.  

\begin{examples}
For the classical Dirichlet form on a smooth compact Riemannian manifold as in Examples \ref{Ex:one} (ii) and \ref{Ex:two} the operator $\partial$
then equals the closure in $L_2(M, dvol)$ of the exterior derivative $d:C^\infty(M)\to L_2(M,dvol, T^\ast M)$. 
\end{examples}

In the sequel we inquire about the adjoint $\partial^\ast$ of $\partial$. Let $\mathcal{C}^\ast$ denote the dual space of $\mathcal{C}$, normed by
\[\left\|w\right\|_{\mathcal{C}^\ast}=\sup\left\lbrace |w(f)|: f\in\mathcal{C}, \left\|f\right\|_{\mathcal{C}}\leq 1\right\rbrace \]
and automatically a Banach space. Given $f,g\in\mathcal{C}$, consider the mapping
\begin{equation}\label{E:divergence}
u\mapsto -\left\langle g\partial f, \partial u\right\rangle_\mathcal{H}=-\int_Xg\:d\Gamma(f,u)
\end{equation}
on $\mathcal{C}$. By Cauchy-Schwarz in $\mathcal{H}$ and Corollary \ref{C:simplecons} (ii) we have
\[|\left\langle g\partial f, \partial u\right\rangle_\mathcal{H}|\leq \sqrt{2}\left\|g\partial f\right\|_\mathcal{H} \mathcal{E}(u)^{1/2}\]
which says that (\ref{E:divergence}) defines an element $\partial^\ast(g\partial f)$ of $\mathcal{C}^\ast$ with norm bound
\[\left\|\partial^\ast(g\partial f)\right\|_{\mathcal{C}^\ast}\leq \sqrt{2}\left\|g\partial f\right\|_\mathcal{H}\ .\]
To 
\[\partial^\ast(g\partial f)=-\int_Xg\:d\Gamma(f,\cdot)\] 
we refer as the \emph{divergence of the vector field $g\partial f$}.
\begin{lemma}\label{L:div}
$\partial^\ast$ extends continuously to a bounded linear operator 
\[\partial^\ast:\mathcal{H}\to\mathcal{C}^\ast\]
with $\left\|\partial^\ast v\right\|_{\mathcal{C}^\ast}\leq \sqrt{2}\left\|v\right\|_{\mathcal{H}}$, $v\in\mathcal{H}$. Moreover,
\[\partial^\ast v(u)=-\left\langle v, \partial u\right\rangle_\mathcal{H}\]
for any $u\in\mathcal{C}$ and any $v\in\mathcal{H}$.
\end{lemma}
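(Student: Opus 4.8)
The plan is to observe that the formula already written in (\ref{E:divergence}) makes sense verbatim when $g\partial f$ is replaced by an arbitrary $v\in\mathcal{H}$, and then to check that the resulting assignment is a bounded linear map into $\mathcal{C}^\ast$ extending the operator defined so far on elementary vector fields. Concretely, for $v\in\mathcal{H}$ I would set
\[
\partial^\ast v(u):=-\left\langle v,\partial u\right\rangle_{\mathcal{H}},\qquad u\in\mathcal{C},
\]
and verify the three required properties in turn: that $\partial^\ast v\in\mathcal{C}^\ast$ with the stated norm bound, that $v\mapsto\partial^\ast v$ is linear and continuous, and that this genuinely extends what was defined before on the span of $\{g\partial f:f,g\in\mathcal{C}\}$.

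First I would fix $v$ and note that $u\mapsto\partial^\ast v(u)$ is linear, since $\partial$ is linear and $\langle\cdot,\cdot\rangle_{\mathcal{H}}$ is bilinear. The bound is then a one-line estimate: by Cauchy--Schwarz in $\mathcal{H}$ and the upper inequality $\left\|\partial u\right\|_{\mathcal{H}}^2\leq 2\mathcal{E}(u)$ from Corollary \ref{C:simplecons} (ii),
\[
|\partial^\ast v(u)|\leq\left\|v\right\|_{\mathcal{H}}\left\|\partial u\right\|_{\mathcal{H}}\leq\sqrt{2}\left\|v\right\|_{\mathcal{H}}\mathcal{E}(u)^{1/2}\leq\sqrt{2}\left\|v\right\|_{\mathcal{H}}\left\|u\right\|_{\mathcal{C}},
\]
where the last step uses $\mathcal{E}(u)^{1/2}\leq\left\|u\right\|_{\mathcal{C}}$, immediate from the definition of the norm on $\mathcal{C}$. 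Hence $\partial^\ast v\in\mathcal{C}^\ast$ with $\left\|\partial^\ast v\right\|_{\mathcal{C}^\ast}\leq\sqrt{2}\left\|v\right\|_{\mathcal{H}}$. Linearity of $v\mapsto\partial^\ast v$ follows at once from bilinearity of $\langle\cdot,\cdot\rangle_{\mathcal{H}}$, and combined with the preceding estimate this yields that $\partial^\ast\colon\mathcal{H}\to\mathcal{C}^\ast$ is bounded, in particular continuous.

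It then remains to confirm that this is truly an extension, so that the word ``extends'' is justified and the extension is unique. Taking $v=g\partial f$ in the displayed formula reproduces exactly the functional (\ref{E:divergence}), so the operator defined here agrees with the one previously introduced on the elementary vector fields $g\partial f$. Moreover the span of $\{g\partial f:f,g\in\mathcal{C}\}$ is dense in $\mathcal{H}$: by the right action (\ref{E:right}) one has $g\partial f=(\partial f)g=f\otimes g$, and $\mathcal{C}\otimes\mathcal{C}$ is dense in $\mathcal{H}$ by Remark \ref{R:CC} (ii). Thus $\partial^\ast$ is the unique continuous extension to $\mathcal{H}$ of the densely defined operator, and the final identity $\partial^\ast v(u)=-\langle v,\partial u\rangle_{\mathcal{H}}$ is simply its definition, now valid for all $v\in\mathcal{H}$ and $u\in\mathcal{C}$. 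I do not expect any genuine obstacle here: the only place where the earlier structure is needed is the density of the elementary vector fields, which reduces the statement to the bounded linear extension theorem, while the norm bound is nothing more than Cauchy--Schwarz together with Corollary \ref{C:simplecons} (ii).
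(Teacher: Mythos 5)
Your proposal is correct and is essentially the paper's own argument: both rest on the same Cauchy--Schwarz estimate combined with $\left\|\partial u\right\|_{\mathcal{H}}^2\leq 2\mathcal{E}(u)$ from Corollary \ref{C:simplecons} (ii), plus the density in $\mathcal{H}$ of the span of the simple fields $g\partial f$ (equivalently of $\mathcal{C}\otimes\mathcal{C}$, via $g\partial f=f\otimes g$). The only difference is organizational --- the paper proves the bound on the dense subspace and extends by continuity, while you write down the explicit formula $\partial^\ast v(u)=-\left\langle v,\partial u\right\rangle_{\mathcal{H}}$ as the global definition and check it agrees on the dense subspace --- which amounts to the same proof.
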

The operator $\partial^\ast$ will be called the \emph{divergence operator}. Note that this is a (global, non-local) definition in a \emph{distributional sense}.

\begin{proof}
For the application of the linear extension of $\partial^\ast$ to a finite linear combination $\sum_k g_k\partial f_k$ of simple vector fields we observe
\[|\partial^\ast(\sum_k g_k\partial f_k)(\varphi)|=|\sum_k \left\langle g_k\partial f_k,\partial \varphi\right\rangle_\mathcal{H}|\leq \sqrt{2}\left\|\sum_k g_k\partial f_k \right\|_\mathcal{H} \mathcal{E}(\varphi)^{1/2},\ \ \varphi \in \mathcal{C},\]
and since these elements form a dense subspace of $\mathcal{H}$, the lemma follows.
\end{proof}

In $X=\mathbb{R}^n$ we have the pointwise identity
\[div\:(g \: grad\:f)= g\Delta f +\nabla f \nabla g\]
for $f\in C^2(\mathbb{R}^n)$ and $g\in C^1(\mathbb{R}^n)$. Let $(L, dom\:L)$ denote the infinitesimal $L_2(X,\mu)$-generator of $(\mathcal{E},\mathcal{F})$. For $f\in dom\:L$ and $g,u\in \mathcal{C}$ we have 
\begin{equation}\label{E:goodcase}
(g L f)(u)=-\mathcal{E}(gu,f),
\end{equation}
and if $f\in\mathcal{C}$, we may use (\ref{E:goodcase}) as a definition of $gLf$: Since 
\[|(g L f)(u)|\leq \mathcal{E}(gu)^{1/2}\mathcal{E}(f)^{1/2}\leq \left\|u\right\|_{\mathcal{C}}\left\|g\right\|_{\mathcal{C}}\mathcal{E}(f)^ {1/2}\]
for any $u\in\mathcal{C}$ by Cauchy-Schwarz and (\ref{E:boundmult}), $g L f$ is a well defined member of $\mathcal{C}^\ast$. Similarly also the energy measure $\Gamma(f,g)$, seen as a linear functional 
\[\Gamma(f,g)(u):=\int_Xu\:d\Gamma(f,g)\]
on $\mathcal{C}$, is a member of $\mathcal{C}^\ast$, because $\left\|\Gamma(f)\right\|_{\mathcal{C}^\ast}\leq 2\mathcal{E}(f)$ and
a bound for $\Gamma(f,g)$ follows by polarization. 

\begin{lemma}\label{L:gpartf}
For any simple vector field $g\partial f$, $f,g\in\mathcal{C}$, we have
\begin{equation}\label{E:analog}
\partial^\ast (g\partial f)= g L f + \Gamma(f,g)\ ,
\end{equation}
seen as an equality in $\mathcal{C}^\ast$. If (\ref{E:desirable}) holds, we further have $L f=\partial^\ast \partial f$ for $f\in\mathcal{C}$.
\end{lemma}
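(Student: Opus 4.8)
The plan is to verify (\ref{E:analog}) by testing both sides against an arbitrary $u\in\mathcal{C}$ and reducing everything to the defining relation (\ref{E:energymeasure}) for energy measures. First I would use the definition (\ref{E:divergence}) of the divergence to write $\partial^\ast(g\partial f)(u)=-\int_X g\,d\Gamma(f,u)$, and then expand this integral by applying (\ref{E:energymeasure}) with test function $g$ and the pair $(f,u)$, obtaining $2\int_X g\,d\Gamma(f,u)=\mathcal{E}(gf,u)+\mathcal{E}(gu,f)-\mathcal{E}(fu,g)$.

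On the right-hand side of (\ref{E:analog}) I would evaluate the two functionals separately: by (\ref{E:goodcase}) one has $(gLf)(u)=-\mathcal{E}(gu,f)$, while the energy-measure functional gives $\Gamma(f,g)(u)=\int_X u\,d\Gamma(f,g)$, which I expand via (\ref{E:energymeasure}) with test function $u$ and pair $(f,g)$ to get $2\int_X u\,d\Gamma(f,g)=\mathcal{E}(uf,g)+\mathcal{E}(ug,f)-\mathcal{E}(fg,u)$. Substituting both expansions turns (\ref{E:analog}) into a purely algebraic statement among the scalars $\mathcal{E}(gf,u)$, $\mathcal{E}(gu,f)$, $\mathcal{E}(fu,g)$; using the symmetry of $\mathcal{E}$ together with the commutativity of the algebra $\mathcal{C}$ (so that $gf=fg$, $gu=ug$, $fu=uf$), both sides collapse to $-\tfrac12\mathcal{E}(gf,u)-\tfrac12\mathcal{E}(gu,f)+\tfrac12\mathcal{E}(fu,g)$, which closes the first claim.

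For the second statement I would \emph{not} try to set $g=\mathbf{1}$ in (\ref{E:analog}), since the constant $\mathbf{1}$ is not a member of $\mathcal{C}=C_0(X)\cap\mathcal{F}$ in general; instead I argue directly. By Lemma \ref{L:div}, for $u\in\mathcal{C}$ one has $\partial^\ast\partial f(u)=-\left\langle\partial f,\partial u\right\rangle_{\mathcal{H}}$, and since $\partial f=f\otimes\mathbf{1}$ and $\partial u=u\otimes\mathbf{1}$, the pairing (\ref{E:scalarprodH}) yields $\left\langle\partial f,\partial u\right\rangle_{\mathcal{H}}=\int_X\mathbf{1}\,d\Gamma(f,u)=\Gamma(f,u)(X)$. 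Under hypothesis (\ref{E:desirable}) this equals $\mathcal{E}(f,u)$, so $\partial^\ast\partial f(u)=-\mathcal{E}(f,u)=(Lf)(u)$, as required.

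I expect no genuine obstacle here: the argument is bookkeeping with (\ref{E:energymeasure}) plus symmetry and commutativity. The only point that needs care is precisely the one just mentioned, namely that $\partial f$ carries the constant weight $\mathbf{1}\notin\mathcal{C}$, so the normalization $Lf=\partial^\ast\partial f$ is not a formal specialization of (\ref{E:analog}) but requires the direct computation through the inner product (\ref{E:scalarprodH}) and the no-killing identity (\ref{E:desirable}).
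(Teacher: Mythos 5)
Your proof is correct and follows essentially the same route as the paper: the paper verifies the single identity $\mathcal{E}(gu,f)=\int_X g\,d\Gamma(u,f)+\int_X u\,d\Gamma(f,g)$ via (\ref{E:energymeasure}), which is exactly the bookkeeping you carry out by expanding both sides of (\ref{E:analog}) against a test function $u\in\mathcal{C}$, and the second claim is handled identically (the paper likewise deduces it from (\ref{E:desirable}) giving $\mathcal{E}(u,f)=-\left\langle u,\partial^\ast\partial f\right\rangle$). Your added remark that $\mathbf{1}\notin\mathcal{C}$, so the identity $Lf=\partial^\ast\partial f$ is not a formal specialization of (\ref{E:analog}), is a correct and worthwhile point of care.
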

\begin{proof}
This is now a simple consequence of the identity
\[-(g L f)(u)=\mathcal{E}(gu, f)=\int_Xgd\Gamma(u,f)+\int_Xu\:d\Gamma(f,g)\ ,\]
$u\in\mathcal{C}$, which itself may quickly be verified using (\ref{E:energymeasure}). The second statement follows because (\ref{E:desirable}) implies $\mathcal{E}(u,f)=-\left\langle u,\partial^\ast\partial f\right\rangle$.
\end{proof}

The preceding distributional definition can be complemented by a Hilbert space point of view. Generally the inclusions $\mathcal{C}\subset L_2(X,\mu)\subset \mathcal{C}^\ast$ are proper and seen as an operator
\[\partial^\ast: \mathcal{H}\to L_2(X,\mu),\]
the divergence $\partial^\ast$ is unbounded. As usual $v\in\mathcal{H}$ is said to be a member of $dom\:\partial^\ast$ if
there exists some (then automatically unique) $v^\ast\in L_2(X,\mu)$ such that $\left\langle u,  v^\ast\right\rangle_{L_2(X,\mu)}=-\left\langle \partial u, v\right\rangle_\mathcal{H}$ for all $u\in\mathcal{C}$.
In this case $\partial^\ast v:=v^\ast$ and 
\begin{equation}\label{E:ibp2}
\left\langle u, \partial^\ast v\right\rangle_{L_2(X,\mu)}=-\left\langle \partial u, v\right\rangle_\mathcal{H}\ , u\in \mathcal{C},
\end{equation}
i.e. $-\partial^\ast$ is the adjoint operator of $\partial$. It is immediate that $\left\lbrace \partial f: f\in dom\:L\right\rbrace\subset dom\:\partial^\ast$. As $-\partial^\ast$ is the adjoint of the densely defined and closable operator $\partial$ it is densely defined, see \cite{RS}. 

Probabilistic interpretations of $\partial$ and $\partial^\ast$ are discussed in Section \ref{S:stochcalc}.

\section{Applications to quasilinear PDE}\label{S:PDE1}

The discussed setup will now be used to solve PDE by fixed point and monotonicity arguments. We focus on equations involving terms $u\mapsto div\:a(grad\: u)$ and $u\mapsto b(\nabla u)$, where $a$ and $b$ are possibly nonlinear transformations. In our context these expressions rewrite $u\mapsto \partial^\ast(a(\partial u))$ and $u\mapsto b(\partial u)$, respectively.

Throughout this section we assume that $\mu$ is an admissible reference measure on $X$ and $(\mathcal{E},\mathcal{F})$ is a symmetric regular Dirichlet form on $L_2(X,\mu)$ satisfying (\ref{E:desirable}).

\emph{Quasilinear elliptic PDE in divergence form}. Consider the quasilinear PDE 
\begin{equation}\label{E:quasilinear}
\partial^\ast a(\partial u)=f.
\end{equation}
We study (\ref{E:quasilinear}) on the Hilbert space $L_2(X,\mu)$. The function $f$ is assumed to be an element of $L_2(X,\mu)$ and the gradient $\partial$ and divergence $\partial^\ast$ are interpreted as in Section \ref{S:gradient}. Let  
$Im\:\partial$ denote the image of $\mathcal{F}$ under $\partial$.

Assume that $a:\mathcal{H}\to\mathcal{H}$ satisfies the following monotonicity, growth and coercivity conditions: 
\begin{equation}\label{E:monotone}
\left\langle a(v)-a(w),v-w\right\rangle_\mathcal{H}\geq 0\ \ \text{for all $v,w\in Im\:\partial$},
\end{equation}
\begin{equation}\label{E:growth}
\left\|a(v)\right\|_{\mathcal{H}}\leq c_0(1+\left\|v\right\|_\mathcal{H})\ \ \ \text{for all $v\in Im\:\partial$}
\end{equation}
with some constant $c_0>0$,
\begin{equation}\label{E:coercive}
\left\langle a(v),v\right\rangle_\mathcal{H}\geq c_1\left\|v\right\|_\mathcal{H}^2-c_2 \ \ \text{for all $v\in Im\:\partial$}
\end{equation}
with constants $c_1>0$, $c_2\geq 0$. Finally, suppose the validity of a \emph{Poincar\'e inequality},
\begin{equation}\label{E:poincare}
\left\|f\right\|_{L_2(X,\mu)}^2\leq c_P\:\mathcal{E}(f)
\end{equation}
with some constant $c_P>0$ for all $f\in L_2(X,\mu)$ with $\int_Xfd\mu=0$. 
A function $u\in \mathcal{F}$ is called a \emph{weak solution to (\ref{E:quasilinear})} if 
\[\left\langle a(\partial u),\partial v\right\rangle_\mathcal{H}=-\left\langle f,v\right\rangle_{L_2(X,\mu)}\ \ \text{ for all $v\in \mathcal{F}$}.\]

The classical Brouwer-Minty monotonicity arguments based on Schauder's fixed point theorem, cf. \cite[Section 9.1]{Ev98}, now yield the following:
\begin{theorem}
Assume $a$ satisfies (\ref{E:monotone}), (\ref{E:growth}) and (\ref{E:coercive}) and suppose (\ref{E:poincare}) holds.
Then (\ref{E:quasilinear}) has a weak solution.  
Moreover, if $a$ is strictly monotone, i.e.
\begin{equation}\label{E:strictlymon}
\left\langle a(v)-a(w),v-w\right\rangle_\mathcal{H}\geq c_3\left\|v-w\right\|_\mathcal{H}^2 \ \ \text{ for all $v,w\in Im\:\partial$}
\end{equation}
with some constant $c_3>0$, then (\ref{E:quasilinear}) has a unique weak solution.
\end{theorem}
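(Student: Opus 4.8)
The plan is to reduce the existence of a weak solution to an abstract surjectivity result for monotone, coercive, bounded operators on the reflexive Banach space $\mathcal{F}$, following the classical Browder--Minty theory. First I would set up the right functional-analytic framework. Let $V:=\mathcal{F}$ be equipped with the inner product inherited from $\mathcal{E}_1$, or—exploiting the Poincar\'e inequality (\ref{E:poincare})—work on the closed subspace $V_0:=\{f\in\mathcal{F}:\int_X f\,d\mu=0\}$ on which $\mathcal{E}(\cdot)^{1/2}$ is a genuine norm equivalent to the $\mathcal{E}_1$-norm; since (\ref{E:desirable}) holds, Corollary \ref{C:simplecons} (ii) gives $\left\|\partial f\right\|_{\mathcal{H}}^2=\mathcal{E}(f)$, so $\partial:V_0\to\mathcal{H}$ is an isometry onto its range $Im\:\partial$. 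I would then define the operator $A:V\to V^\ast$ by
\[
\langle A(u),v\rangle_{V^\ast,V}:=\left\langle a(\partial u),\partial v\right\rangle_{\mathcal{H}},\qquad u,v\in V,
\]
so that $u$ is a weak solution to (\ref{E:quasilinear}) precisely when $A(u)=-f$ in $V^\ast$, where $f$ is viewed as an element of $V^\ast$ via $v\mapsto\langle f,v\rangle_{L_2(X,\mu)}$.

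Next I would verify that $A$ inherits the three structural hypotheses of the Browder--Minty theorem from the corresponding properties (\ref{E:monotone}), (\ref{E:growth}), (\ref{E:coercive}) of $a$. Monotonicity of $A$ is immediate from (\ref{E:monotone}) together with the linearity of $\partial$, since $\langle A(u)-A(w),u-w\rangle=\left\langle a(\partial u)-a(\partial w),\partial(u-w)\right\rangle_{\mathcal{H}}\geq 0$. Boundedness (growth) follows from (\ref{E:growth}) and $\left\|\partial u\right\|_{\mathcal{H}}=\mathcal{E}(u)^{1/2}$ via Cauchy--Schwarz in $\mathcal{H}$. For coercivity I would use (\ref{E:coercive}) to obtain $\langle A(u),u\rangle=\left\langle a(\partial u),\partial u\right\rangle_{\mathcal{H}}\geq c_1\mathcal{E}(u)-c_2$, and here the Poincar\'e inequality (\ref{E:poincare}) is essential to convert the energy $\mathcal{E}(u)$ into control of the full norm $\left\|u\right\|_V$ on $V_0$, so that $\langle A(u),u\rangle/\left\|u\right\|_V\to\infty$ as $\left\|u\right\|_V\to\infty$. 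One also needs hemicontinuity (demicontinuity) of $A$—i.e. that $t\mapsto\langle A(u+tv),w\rangle$ is continuous—which I expect to follow from continuity of $a$ on $\mathcal{H}$ and the boundedness of $\partial$; this point deserves a line or two of attention, since the hypotheses on $a$ as stated list only monotonicity, growth and coercivity, so I would either assume $a$ continuous or deduce the needed hemicontinuity from monotonicity plus boundedness.

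Having established that $A$ is monotone, coercive, bounded and hemicontinuous on the reflexive (indeed Hilbert) space $V$, the Browder--Minty surjectivity theorem (equivalently, the argument via Galerkin finite-dimensional approximations and Brouwer's/Schauder's fixed point theorem as in \cite[Section 9.1]{Ev98}) yields a solution $u\in V$ of $A(u)=-f$, proving existence. For the uniqueness statement under strict monotonicity (\ref{E:strictlymon}), I would argue directly: if $u_1,u_2$ are two weak solutions, subtracting their defining identities and testing against $v=u_1-u_2$ gives $\left\langle a(\partial u_1)-a(\partial u_2),\partial(u_1-u_2)\right\rangle_{\mathcal{H}}=0$, whence by (\ref{E:strictlymon}) $c_3\left\|\partial(u_1-u_2)\right\|_{\mathcal{H}}^2\leq 0$, so $\mathcal{E}(u_1-u_2)=0$; the Poincar\'e inequality then forces $u_1-u_2$ to be constant, and normalizing (or the zero-mean condition) yields $u_1=u_2$.

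The main obstacle I anticipate is not the abstract monotonicity machinery, which is standard once the operator $A$ is correctly identified, but rather the careful bookkeeping around the constant functions and the Poincar\'e inequality. Because $\partial$ annihilates constants, the energy norm is only a seminorm on all of $\mathcal{F}$, so coercivity fails on the one-dimensional space of constants; one must either restrict to the zero-mean subspace $V_0$ (where (\ref{E:poincare}) applies) or otherwise quotient out constants, and then check that this restriction is compatible with the class of admissible data $f$ (e.g. the solvability condition $\int_X f\,d\mu=0$ implicit in (\ref{E:poincare})). Ensuring that $Im\:\partial$ is the right domain for the hypotheses on $a$—and that the solution recovered in $V_0$ indeed solves the problem in the stated sense on all of $\mathcal{F}$—is the delicate step that requires the most care.
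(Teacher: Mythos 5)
Your proposal is correct and takes essentially the same route as the paper: the paper in fact gives no written proof at all, merely invoking ``the classical Brouwer--Minty monotonicity arguments based on Schauder's fixed point theorem, cf.\ \cite[Section 9.1]{Ev98}'', which is precisely the machinery you set up via the operator $A(u):=\left\langle a(\partial u),\partial\,\cdot\right\rangle_{\mathcal{H}}$, and the two subtleties you flag (hemicontinuity of $a$, and the bookkeeping with constants and the zero-mean subspace in the Poincar\'e inequality) are genuinely glossed over in the paper as well. One caveat: your fallback of deducing hemicontinuity from monotonicity plus boundedness does not work in general (a monotone function with a jump is already a counterexample), so continuity (or hemicontinuity) of $a$ must simply be added as an assumption, as it is implicitly in the smooth setting of \cite[Section 9.1]{Ev98}.
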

\begin{remark}
If $a$ is a decomposable (non-linear) operator, that is if $a=(a_x)_{x\in X}$ with $a_x:\mathcal{H}_x\to\mathcal{H}_x$, $x\in X$ and $m-\esssup_{x\in X}\left\|a_x\right\|_{\mathcal{H}_x\to\mathcal{H}_x}<\infty$, then to have (\ref{E:monotone}) it is sufficient to have
\[\left\langle a_x(v(x))-a_x(w(x))\right\rangle_{\mathcal{H}_x}\geq 0\]
for all $v,w\in Im\:\partial$ and $m$-a.e. $x\in X$. Similarly for conditions (\ref{E:growth}), (\ref{E:coercive}) and (\ref{E:strictlymon}).
\end{remark}

\emph{Quasilinear elliptic PDE in non-divergence form}. Consider the PDE
\begin{equation}\label{E:nondiv}
-Lu+ b(\partial u)+\varrho u=0,
\end{equation}
where $\varrho>0$ and $b$ is a generally non-linear function-valued mapping on $\mathcal{H}$. We assume that $b:\mathcal{H}\to L_2(X,\mu)$ is such that 
\begin{equation}\label{E:nondivgrowth}
\left\|b(v)\right\|_{L_2(X,\mu)}\leq c_4(1+\left\|v\right\|_\mathcal{H}),\ v\in Im\:\partial,
\end{equation}
with some $c_5>0$. A function $u\in \mathcal{F}$ is called a weak solution to (\ref{E:nondiv}) if 
\[\mathcal{E}(u,v)+\left\langle b(\partial u),\partial v\right\rangle_\mathcal{H}+\varrho\left\langle u,v\right\rangle_{L_2(X,\mu)}=0 \ \text{ for all $v\in \mathcal{F}$.}\] 
Along the lines of \cite[Section 9.2.2, Example 2]{Ev98}, we obtain the following.

\begin{theorem}
Assume that the embedding $\mathcal{F}\subset L_2(X,\mu)$ is compact and that (\ref{E:nondivgrowth}) holds. Then for any sufficiently large $\varrho>0$ there exists a weak solution to (\ref{E:nondiv}).
\end{theorem}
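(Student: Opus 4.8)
The plan is to recast equation (\ref{E:nondiv}) as a fixed point problem for a compact nonlinear map, following the Leray--Schauder approach of \cite[Section 9.2.2]{Ev98}. First I would define, for a fixed auxiliary function $w\in L_2(X,\mu)$, the linear problem of finding $u\in\mathcal{F}$ such that $\mathcal{E}(u,v)+\varrho\langle u,v\rangle_{L_2(X,\mu)}=-\langle b(\partial w),\partial v\rangle_{\mathcal{H}}$ for all $v\in\mathcal{F}$. For this to make sense I first note that $\partial w$ is not available directly since $b$ is defined on $\mathcal{H}$ and $w$ need only lie in $L_2$; the cleaner formulation is to fix $w\in\mathcal{F}$, form $b(\partial w)\in L_2(X,\mu)$ via the growth bound (\ref{E:nondivgrowth}), and let $u$ solve $\mathcal{E}(u,v)+\varrho\langle u,v\rangle_{L_2}=-\langle b(\partial w),v\rangle_{L_2}$. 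Since the bilinear form $\mathcal{E}_\varrho(u,v):=\mathcal{E}(u,v)+\varrho\langle u,v\rangle_{L_2}$ is symmetric, bounded and coercive on $\mathcal{F}$ (indeed $\mathcal{E}_\varrho(u)\geq\min(1,\varrho)\,\mathcal{E}_1(u)$), the Lax--Milgram theorem gives a unique such $u$; this defines a solution operator $A\colon\mathcal{F}\to\mathcal{F}$, $w\mapsto u$.

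Next I would verify the hypotheses of Schauder's fixed point theorem for $A$ (or rather for the composition with the compact embedding). The key point is compactness: the map factors as $\mathcal{F}\hookrightarrow L_2(X,\mu)$ (compact by assumption) $\to\mathcal{F}$, where the second arrow is the Lax--Milgram solution operator, which is bounded from $L_2$ into $\mathcal{F}$. Precisely, testing the defining equation with $v=u$ and using Cauchy--Schwarz together with (\ref{E:nondivgrowth}) yields
\begin{equation}\notag
\min(1,\varrho)\,\mathcal{E}_1(u)\leq\mathcal{E}_\varrho(u)=-\langle b(\partial w),u\rangle_{L_2}\leq c_4\big(1+\left\|\partial w\right\|_{\mathcal{H}}\big)\left\|u\right\|_{L_2},
\end{equation}
which, after absorbing $\left\|u\right\|_{L_2}\leq\mathcal{E}_1(u)^{1/2}$, gives an a priori bound of the form $\mathcal{E}_1(u)^{1/2}\leq C\big(1+\left\|w\right\|_{\mathcal{F}}\big)$. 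This shows $A$ maps bounded sets to bounded sets in $\mathcal{F}$; composing with the compact embedding $\mathcal{F}\hookrightarrow L_2$ and the bounded solution map makes the relevant self-map of a ball in $L_2$ (or of $\mathcal{F}$) compact. Continuity of $A$ follows because $b$ is continuous from $\mathcal{H}$ into $L_2$ and $\partial\colon\mathcal{F}\to\mathcal{H}$ is bounded by Corollary \ref{C:simplecons}, so $w_n\to w$ in $\mathcal{F}$ forces $b(\partial w_n)\to b(\partial w)$ in $L_2$ and hence $Aw_n\to Aw$ in $\mathcal{F}$ by boundedness of the solution operator.

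To close the fixed point argument I would invoke the Leray--Schauder theorem, which requires a uniform a priori bound on all possible fixed points of $\sigma A$ for $\sigma\in[0,1]$; equivalently, if $u=\sigma Au$ then $u$ satisfies $\mathcal{E}_\varrho(u,v)=-\sigma\langle b(\partial u),v\rangle_{L_2}$, and testing with $v=u$ together with (\ref{E:nondivgrowth}) and the absorption trick gives $\mathcal{E}_1(u)^{1/2}\leq C$ independent of $\sigma$, where the role of \emph{sufficiently large} $\varrho$ is to guarantee that the coercivity constant dominates the linear growth term so that the bound is genuinely uniform. The main obstacle I anticipate is this uniform a priori estimate: the growth condition (\ref{E:nondivgrowth}) controls $b(\partial u)$ only linearly in $\left\|\partial u\right\|_{\mathcal{H}}$, and since $\left\|\partial u\right\|_{\mathcal{H}}^2$ is comparable to $\mathcal{E}(u)$ by Corollary \ref{C:simplecons} (ii) under (\ref{E:desirable}), the estimate only closes when the coercive contribution $\mathcal{E}(u)+\varrho\left\|u\right\|_{L_2}^2$ can absorb $c_4(1+\mathcal{E}(u)^{1/2})\left\|u\right\|_{L_2}$; making $\varrho$ large enough for this absorption (via Young's inequality) is exactly where the threshold on $\varrho$ enters, and verifying it carefully is the crux of the proof.
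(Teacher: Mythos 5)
Your overall architecture --- a linear solution operator built by Lax--Milgram, then a Schaefer/Leray--Schauder fixed point argument with a uniform a priori bound on fixed points of $\sigma A$, the largeness of $\varrho$ entering through Young's inequality --- is exactly the paper's proof, and your a priori estimate matches the paper's computation. However, your compactness step contains a genuine error. You claim that $A$ ``factors as $\mathcal{F}\hookrightarrow L_2(X,\mu)\to\mathcal{F}$, where the second arrow is the Lax--Milgram solution operator.'' No such factorization exists: the datum fed into the solution operator is $b(\partial w)$, and the map $w\mapsto b(\partial w)$ cannot be defined (let alone be continuous) on $L_2(X,\mu)$, because it requires $\partial w$, i.e. the $\mathcal{F}$-topology on the \emph{input}. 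What you actually have is $A=S\circ(b\circ\partial)$, where $b\circ\partial:\mathcal{F}\to L_2(X,\mu)$ is merely bounded and continuous and $S:L_2(X,\mu)\to\mathcal{F}$ is the solution operator; a composition of two bounded continuous maps is bounded and continuous, not compact, so as written your argument only shows that $A$ maps bounded sets to bounded sets.

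The compactness must come from the smoothing of the solution operator itself, not from the embedding applied to the input, and this is precisely what the paper uses: since $w=\Phi(u)$ solves $-Lw+\varrho w=-b(\partial u)$ with an $L_2$ right-hand side, one has $w\in dom\:L$ and the growth bound gives $\left\|Lw\right\|_{L_2(X,\mu)}\leq c(1+\mathcal{E}_1(u)^{1/2})$; hence $\Phi$ maps bounded sets of $\mathcal{F}$ into sets bounded in the graph norm of $L$, and such sets are precompact in $\mathcal{F}$. Indeed, if $(w_n)_n$ is graph-norm bounded by $C$, the compact embedding yields a subsequence converging in $L_2(X,\mu)$, and then
\[
\mathcal{E}(w_{n_k}-w_{n_j})=-\left\langle L(w_{n_k}-w_{n_j}),\,w_{n_k}-w_{n_j}\right\rangle_{L_2(X,\mu)}\leq 2C\left\|w_{n_k}-w_{n_j}\right\|_{L_2(X,\mu)}\longrightarrow 0,
\]
so the subsequence is $\mathcal{E}_1$-Cauchy and converges in $\mathcal{F}$. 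Equivalently: the resolvent $S=(\varrho-L)^{-1}:L_2(X,\mu)\to\mathcal{F}$ is \emph{compact} (this is where the compact embedding is genuinely used), and $A=S\circ(b\circ\partial)$ is compact because the compact factor comes \emph{last}, not first. With this step repaired, the remainder of your proposal (continuity of $A$, the uniform bound on fixed points of $\sigma A$, and Schaefer's theorem) goes through exactly as in the paper.
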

For convenience we briefly comment on the proof.
\begin{proof}
Given $u\in \mathcal{F}$, note that $-b(\partial u)\in L_2(X,\mu)$ and denote by $w$ the unique weak solution to the linear problem $-Lw+\varrho w=-b(\partial u)$, i.e. the unique function $w \in \mathcal{F}$ such that 
\begin{equation}\label{E:nondivweak}
\mathcal{E}(w,v)+\varrho\left\langle w,v\right\rangle_{L_2(X,\mu)}=-\left\langle b(\partial u),v\right\rangle_{L_2(X,\mu)}
\end{equation}
for all $v\in \mathcal{F}$. From (\ref{E:nondivgrowth}) we obtain $\left\|Lw\right\|_{L_2(X,\mu)}\leq c(1+\mathcal{E}_1(u)^{1/2})$. By the compact embedding, the mapping $u\mapsto \Phi(u):=w$ is easily seen to be continuous and compact from $\mathcal{F}$ into itself. See 
\cite[Section 9.2.2, Theorem 5]{Ev98}. The set 
\[\left\lbrace u\in \mathcal{F}: u=\lambda \Phi(u)\ \text{ for some $0<\lambda \leq 1$}\right\rbrace\] 
is bounded in $\mathcal{F}$: For a member of this set, (\ref{E:nondivweak}) implies
\begin{align}
\mathcal{E}(u)+\varrho\left\|u\right\|_{L_2(X,\mu)}^2&=-\lambda\left\langle b(\partial u),u\right\rangle_{L_2(X,\mu)}\notag\\
&\leq \left\| b(\partial u)\right\|_{L_2(X,\mu)}\left\|u\right\|_{L_2(X,\mu)}\notag\\
&\leq c_4\varepsilon(1+\left\|\partial u\right\|_\mathcal{H})\varepsilon^{-1}\left\|u\right\|_{L_2(X,\mu)}\notag\\
&\leq c_4(\varepsilon +\varepsilon\mathcal{E}(u)^{1/2}+\varepsilon^{-1}\left\|u\right\|_{L_2(X,\mu)})^2\notag\\
&\leq c(1+\varepsilon^ 2\mathcal{E}(u)+\varepsilon^{-2}\left\|u\right\|_{L_2(X,\mu)}^2)\notag
\end{align}
for any $\varepsilon>0$ and with a constant $c>0$ independent of $\varepsilon$, $\lambda$ and $u$. Now choose $\varepsilon>0$ sufficiently small and $\varrho>0$ sufficiently large to obtain $\mathcal{E}_1(u)^{1/2}\leq 2c$. Altogether this allows the application of Schaefer's fixed point theorem, cf. \cite[Section 9.2.2, Theorem 4]{Ev98}, to obtain the existence of a fixed point $u=\Phi(u)$ in $\mathcal{F}$.
\end{proof}

\section{Change of proper speed measure  and closability}\label{S:closability}

As before let $(\mathcal{E},\mathcal{F})$ be a symmetric regular Dirichlet form on $L_2(X,\mu)$, where $\mu$ is an admissible reference measure on $X$. Assume that $m$ is a measure satisfying Assumption \ref{A:energydominant}.
We will now address the closability of $(\mathcal{E},\mathcal{C})$ on $L_2(X,m)$, first in the case of $(\mathcal{E},\mathcal{F})$ irreducible or transient and then in the case that $(\mathcal{E},\mathcal{F})$ is induced by a regular resistance form.

A Dirichlet form $(\mathcal{E},\mathcal{F})$ is called \emph{transient} relative to $L_2(X,\mu)$ if there is a bounded $\mu$-integrable and $\mu$-a.e. positive function $\gamma$ on $X$  such that 
\[\int_X|u|\gamma d\mu\leq \mathcal{E}(u)^{1/2}\ \ \ \text{ for all $u\in\mathcal{F}$}.\]
Let $(T_t)_{t>0}$ denote the Markovian semigroup uniquely associated with $(\mathcal{E},\mathcal{F})$. The Dirichlet form $(\mathcal{E},\mathcal{F})$ is called \emph{irreducible} if any $(T_t)_{t>0}$-invariant set $A$ satisfies either $\mu(A)=0$ or $\mu(X\setminus A)=0$. 

\begin{theorem}\label{T:closabletransient}
Assume that $(\mathcal{E},\mathcal{F})$ is irreducible or transient. Then $(\mathcal{E},\mathcal{C})$ is closable on $L_2(X,m)$, and its closure $(\mathcal{E},\mathcal{F}^{(m)})$ is a symmetric local regular Dirichlet form. 
\end{theorem}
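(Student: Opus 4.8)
The plan is to verify closability of $(\mathcal{E},\mathcal{C})$ on $L_2(X,m)$ directly from the definition: I must show that whenever a sequence $(u_k)_k\subset\mathcal{C}$ satisfies $u_k\to 0$ in $L_2(X,m)$ and is $\mathcal{E}$-Cauchy, then $\mathcal{E}(u_k)\to 0$. Since $(\mathcal{E},\mathcal{F})$ is already closed on $L_2(X,\mu)$, the $\mathcal{E}$-Cauchy sequence $(u_k)_k$ has an $\mathcal{E}$-limit $u$ in the extended Dirichlet space $\mathcal{F}_e$, and the whole issue reduces to showing that this limit satisfies $\mathcal{E}(u)=0$, equivalently (by locality) that $u$ is $m$-a.e. constant on the relevant components. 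The essential point is that the \emph{same} sequence is $\mathcal{E}$-convergent to $u$ and $L_2(X,m)$-convergent to $0$, and one must rule out that the $\mathcal{E}$-limit is a nonzero function that $m$ simply does not see.

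First I would pass to quasi-continuous versions. By transience, every element of $\mathcal{F}_e$ admits a quasi-continuous representative $\widetilde{u}\in\widetilde{\mathcal{F}_e}$, unique up to a set of zero $0$-capacity, and $\mathcal{E}$-convergence implies convergence q.e.\ along a subsequence. Thus after extracting a subsequence I may assume $\widetilde{u_k}\to\widetilde{u}$ q.e.\ and $\mathcal{E}(u_k-u)\to 0$. Because $m$ does not charge sets of zero $0$-capacity, the q.e.\ convergence $\widetilde{u_k}\to\widetilde{u}$ holds $m$-a.e.\ as well. On the other hand $u_k\to 0$ in $L_2(X,m)$ forces (again along a subsequence) $\widetilde{u_k}\to 0$ $m$-a.e. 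Comparing the two $m$-a.e.\ limits gives $\widetilde{u}=0$ $m$-a.e., so $u=0$ as an element of $L_2(X,m)$. The key step that makes this work is the capacitary comparison: the hypothesis that $m$ charges no set of zero $0$-capacity is precisely what lets me transfer q.e.\ statements (which live in the $\mu$-capacity world of the original form) into $m$-a.e.\ statements.

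The final link is to conclude $\mathcal{E}(u)=0$ from $u=0$ $m$-a.e. This is where locality enters decisively, and it is the step I expect to be the main obstacle: in general a function can vanish $m$-a.e.\ yet have positive energy, because energy measures may be singular with respect to $m$ (indeed Assumption~\ref{A:energydominant} only provides absolute continuity in the reverse direction). To close the gap I would invoke the argument of \cite[Section 5]{RW}: using locality one shows that the energy measure $\Gamma(u)$ of a quasi-continuous function vanishing $m$-a.e.\ must itself vanish, because $m$ is energy dominant in the sense of Assumption~\ref{A:energydominant}, so $\Gamma(u_k)\ll m$ uniformly and the energy densities $\frac{d\Gamma(u_k)}{dm}$ form a Cauchy sequence in $L_1(X,m)$ whose limit is controlled by the $m$-a.e.\ limit of the $u_k$; locality ensures the energy density at a point depends only on the local behavior of the function, so the $m$-a.e.\ vanishing of $\widetilde{u}$ propagates to the vanishing of its energy density. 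Hence $\mathcal{E}(u)=\Gamma(u)(X)=0$, and therefore $\mathcal{E}(u_k)\to\mathcal{E}(u)=0$, which is closability.

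Once closability is established, the remaining assertions are comparatively routine. The closure $(\mathcal{E},\mathcal{F}^{(m)})$ is symmetric because $\mathcal{E}$ was, and it inherits the Markov (Dirichlet) property from $(\mathcal{E},\mathcal{F})$ since normal contractions operate on $\mathcal{C}$ and closure preserves this. Regularity follows because $\mathcal{C}=C_0(X)\cap\mathcal{F}$ is by construction dense in $C_0(X)$ (it is an algebra separating points, via the regularity of the original form) and dense in $\mathcal{F}^{(m)}$ by definition of the closure, so $\mathcal{C}$ is a core. Finally locality of $(\mathcal{E},\mathcal{F}^{(m)})$ is inherited from the locality of $(\mathcal{E},\mathcal{F})$ on the common core $\mathcal{C}$.
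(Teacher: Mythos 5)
Your overall skeleton is the same as the paper's: transience makes $(\mathcal{F}_e,\mathcal{E})$ a Hilbert space, so the $\mathcal{E}$-Cauchy sequence has an $\mathcal{E}$-limit $u\in\mathcal{F}_e$; quasi-continuous versions converge q.e.\ along a subsequence; since $m$ charges no set of zero $0$-capacity this convergence is also $m$-a.e., whence $\widetilde{u}=0$ $m$-a.e.; and everything reduces to the implication ``$\widetilde{u}=0$ $m$-a.e.\ $\Rightarrow\ \mathcal{E}(u)=0$''. You correctly isolate this implication as the crux, and the routine closing remarks (Markov property, regularity, locality of the closure) are fine. But your proposed proof of the crux does not work, and this is a genuine gap.

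Here is the problem. Assumption \ref{A:energydominant}, extended from $\mathcal{C}$ to $\mathcal{F}_e$ via (\ref{E:approxmeasure}), gives $\Gamma(u)\ll m$ (incidentally, your parenthetical that the assumption ``only provides absolute continuity in the reverse direction'' is backwards: it provides exactly $\Gamma(f)\ll m$, and that is the direction one uses). From $\widetilde{u}=0$ $m$-a.e.\ this yields $\Gamma(u)(\lbrace \widetilde{u}\neq 0\rbrace)=0$, but it says nothing whatsoever about $\Gamma(u)(\lbrace \widetilde{u}=0\rbrace)$: the density $\frac{d\Gamma(u)}{dm}$ may a priori be strictly positive precisely where $\widetilde{u}$ vanishes. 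Your mechanism for excluding this --- that the densities $\frac{d\Gamma(u_k)}{dm}$ are $L_1$-Cauchy and that ``locality ensures the energy density at a point depends only on the local behavior of the function'' --- is not a proof: $m$-a.e.\ vanishing gives no control on any open set, and no locality axiom identifies the pointwise value of $\frac{d\Gamma(u)}{dm}$ from the pointwise value of $\widetilde{u}$. What is actually required is the energy image density property of Bouleau--Hirsch: the image measure $\sigma^u=u_\ast\Gamma(u)$ on $\mathbb{R}$ is absolutely continuous with respect to Lebesgue measure, so that $\Gamma(u)(\lbrace\widetilde{u}=0\rbrace)=\sigma^u(\lbrace 0\rbrace)=0$, which is (\ref{E:zerolevel}). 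This is \cite[Theorem 5.2.3]{BH91} for $u\in\mathcal{F}$; but your limit $u$ lies only in $\mathcal{F}_e$, and extending the property to $\mathcal{F}_e$ is not automatic --- the paper's Lemma \ref{L:cap} spends most of its proof on exactly this extension, using the chain rule for compositions of $\mathcal{F}_e$-functions with $C_b^1$-functions and using transience twice (to produce the $\mathcal{E}$-limit $F_\infty\in\mathcal{F}_e$ and to identify it as $0$ via the reference function $\gamma$). Nor does your appeal to \cite{RW} fill the hole: the paper invokes \cite[Theorem 5.9]{RW} only for the surrounding closability argument (your steps), and proves Lemma \ref{L:cap} itself precisely because the level-set property for this measure $m$ and for elements of $\mathcal{F}_e$ is the new ingredient. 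Only after $\mathcal{E}(u)=\Gamma(u)(\lbrace\widetilde{u}\neq 0\rbrace)+\Gamma(u)(\lbrace\widetilde{u}=0\rbrace)=0$ is secured does $\mathcal{E}(u_k)\to\mathcal{E}(u)=0$, i.e.\ closability, follow.
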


The \emph{$1$-capacity} associated with $(\mathcal{E},\mathcal{F})$ is defined as
\[\cpct(A):=\inf\left\lbrace \mathcal{E}_1(u): \text{$u\in\mathcal{F}$ and $u\geq 1$ $\mu$-a.e. on $A$}\right\rbrace \]
for open sets $A\subset X$. If the infimum is taken over the empty set, $\cpct(A)$ is set to be infinity. The $1$-capacity of an arbitrary subset $A\subset X$ is defined to be 
\[\cpct(A):=\inf\left\lbrace \cpct(B): \text{$B\supset A$, $B$ open}\right\rbrace.\]
If $(\mathcal{E},\mathcal{F})$ is transient we can also define the associated \emph{$0$-capacity} using $\mathcal{E}$ in place of $\mathcal{E}_1$, it will be denoted by $\cpct_0$. In this case a set has zero $0$-capacity if and only if it has zero $1$-capacity.

If $m$ does not charge sets of zero capacity then Theorem \ref{T:closabletransient} follows from Proposition \ref{P:fullqs} below by the arguments of \cite[Section 5, in particular Theorem 5.3]{FLJ}. See also \cite[Section 6.2, p.275]{FOT94} and \cite[Corollary 5.2.10]{ChFu12}. The transient case had already been established in \cite{RW}. If $m$ charges sets of zero capacity then it uniquely decomposes $m=m_0+m_1$ into an admissible reference measure $m_0$ that is absolutely continuous with respect to $\cpct$ and a nonnegative measure $m_1$ that is singular, see \cite{FST}. It is easy to see that as $m$ satisfies Assumption \ref{A:energydominant} also $m_0$ does. Closability with respect to $m_0$ implies closability with respect to $m$, hence Theorem \ref{T:closabletransient} holds also in this case.

A set $E\subset X$ is \emph{quasi-open} if for any $\varepsilon >0$ there exists an open set $G$ containing $E$ such that $\cpct(G\setminus E)=0$. A set is said to be \emph{quasi-closed} if it is the complement of a quasi-open set. A function on $X$ is called \emph{quasi-continuous} if for any $\varepsilon>0$ there exists an open set $G\subset X$ with $\cpct(G)<\varepsilon$ and the function is continuous on $X\setminus G$. Any element $u\in\mathcal{F}$ has an $m$-version that is quasi-continuous. See \cite[Theorem 2.1.3]{FOT94}. We will denote this version by $\widetilde{u}$. If a property holds on $X\setminus N$, where $N\subset X$ is a set of zero capacity, $\cpct(N)=0$, then we say this property holds \emph{quasi-everywhere}, abbreviated \emph{q.e.} For $A,B\subset X$ we write $A\subset B$ q.e. if $\cpct(A\setminus B)=0$. Given a nonnegative Radon measure $\nu$ on $X$ that charges no set of zero capacity, a set $\widetilde{F}\subset X$ is called a \emph{quasi-support} for $\nu$ if $\widetilde{F}$ is quasi-closed, $\nu(X\setminus \widetilde{F})=0$ and for any other set $\check{F}\subset X$ with these properties we have $\widetilde{F}\subset \check{F}$ q.e. The measure $\nu$ is said to have \emph{full quasi-support} if $X$ itself is a quasi-support for $\nu$. The following condition is necessary and sufficient for $\nu$ to have full quasi-support:
\begin{equation}\label{E:condqs}
\text{$\widetilde{u}=0$ $\nu$-a.e. if and only if $\widetilde{u}=0$ for any $u\in\mathcal{F}$.}
\end{equation}
A proof of this equivalence is given in \cite[Theorem 3.3]{FLJ}. Here we are interested in the quasi-supports of energy dominant measures.

\begin{proposition}\label{P:fullqs}
Assume $(\mathcal{E},\mathcal{F})$ is irreducible or transient and that $m$ does not charge sets of zero capacity. Then $m$ has full quasi-support.
\end{proposition}

To prove Proposition \ref{P:fullqs} we first establish a lemma. 
\begin{lemma}\label{L:zerolevel}
Let $u\in\mathcal{F}\cap L_\infty(X,\mu)$ be such that $\widetilde{u}=0$ $\Gamma(u)$-a.e. Then $\mathcal{E}(u)=0$.
\end{lemma}

The following short and elegant proof of this lemma was kindly suggested to us by one of the referees of this paper. 
\begin{proof}
For $\varepsilon>0$ define a function $h_\varepsilon:\mathbb{R}\to [0,1]$ by
\[h_\varepsilon(t):=\min\left\lbrace \frac{|t|}{\varepsilon}, 1\right\rbrace.\]
It is not difficult to see that for any $\varepsilon>0$ the function $t\mapsto \frac12 h_\varepsilon(t)t$ is a normal contraction, cf. \cite[p. 5]{FOT94}. Therefore $h_\varepsilon(u)u\in\mathcal{F}$ and $\sup_{\varepsilon}\mathcal{E}_1(h_\varepsilon(u)u)<+\infty$. Consequently there exists a sequence $(\varepsilon_k)_k$ converging to zero such that $(h_{\varepsilon_k}(u)u)_k$ converges $\mathcal{E}_1$-weakly to some $g\in\mathcal{F}$. By dominated convergence $(h_\varepsilon(u)u)_\varepsilon$ is seen to have the $L_2(X,\mu)$-limit $u$, hence $g=u$. On the other hand the defining identity (\ref{E:energymeasure}) for energy measures is valid also for functions from $\mathcal{F}\cap L_\infty(X,\mu)$, provided we take a quasi-continuous version of the integrand (see for instance \cite[Lemma 4.5.4]{FOT94}). This shows
\[2\mathcal{E}(h_\varepsilon(u)u,u)=\int_X h_\varepsilon(\widetilde{u})\:d\Gamma(u)+\int_X \widetilde{u}\:d\Gamma(h_\varepsilon(u),u).\]
By hypothesis the first integral on the right hand side vanishes, and using Cauchy-Schwarz also the second is seen to be zero. Therefore
\[\mathcal{E}(u)=\lim_k\mathcal{E}(h_{\varepsilon_k}(u)u,u)=0.\]
\end{proof}

\begin{remark} If $(\mathcal{E},\mathcal{F})$ is local, the condition $\widetilde{u}=0$ $\Gamma(u)$-a.e. is not needed. 
\end{remark}

We prove Proposition \ref{P:fullqs}.
 
\begin{proof} It suffices to check condition (\ref{E:condqs}). If $u\in\mathcal{F}$ is such that $\widetilde{u}=0$ q.e. then also $\widetilde{u}=0$ $m$-a.e. because $m$ does not charge sets of zero capacity. To verify the converse, let $u\in\mathcal{F}$ be such that $\widetilde{u}=0$ $m$-a.e. Then also $u_N:=\max\left\lbrace \min\left\lbrace \widetilde{u}, N\right\rbrace,-N\right\rbrace=0$ for any $N\in\mathbb{N}$ $m$-a.e. and therefore
\[\mathcal{E}(u)=\lim_N \mathcal{E}(u_N)=0\]
by the preceding lemma together with \cite[Theorem 1.4.2 (iii)]{FOT94}. Following \cite{FST} and \cite{KN91} set
\[\mathcal{E}^{m}(f,g):=\mathcal{E}(f,g)+\left\langle f,g\right\rangle_{L_2(X,m)}\]
for $f,g\in \widetilde{\mathcal{F}}\cap L_2(X,m)$, where $\widetilde{\mathcal{F}}$ denotes the collection of all $\mathcal{E}$-quasi-continuous versions of elements of $\mathcal{F}$. Then $(\mathcal{E}^{m},\widetilde{\mathcal{F}}\cap L_2(X,m))$ is a Dirichlet form on $L_2(X,\mu)$, see \cite[Lemma 6.1.1]{FOT94} and obviously $\mathcal{C}\subset \widetilde{\mathcal{F}}\cap L_2(X,m)$. Moreover, by \cite[Theorem 2.1 and Proposition 2.2]{KN91} the Dirichlet form $(\mathcal{E}^{m},\widetilde{\mathcal{F}}\cap L_2(X,m))$ is regular and transient. In particular, $f\mapsto \mathcal{E}^m(f)^{1/2}$ is a norm in $\widetilde{\mathcal{F}}\cap L_2(X,m)$, and since $\mathcal{E}^m(\widetilde{u})=0$ we obtain $\widetilde{u}=0$. If $(\mathcal{E},\mathcal{F})$ is transient the proof simplifies.
\end{proof}

Another case we are interested in arises if the regular Dirichlet form $(\mathcal{E},\mathcal{F})$ on $X$ is induced by a regular resistance form $(\overline{\mathcal{E}},\overline{\mathcal{F}})$ on the set $X$, which is then equipped with the topology determined by the associated resistance metric, see \cite{Ki01,Ki03} and in particular \cite[Definitions 3.1 and 9.5]{Ki12}. Regular resistance forms may for instance be obtained from regular harmonic structures on p.c.f. self-similar sets, \cite{Ki01}, on finitely ramified fractals (not necessarily self-similar) \cite{T08} and on some infinitely ramified sets such as Sierpinski carpets \cite{BB99}. A resistance form itself does not require the specification of a measure, and the conditions a measure must satisfy in order to obtain an induced Dirichlet form are rather weak. We quote the following result, which basically is a reformulation of \cite[Lemma 9.2 and Theorem 9.4]{Ki12}. 

\begin{theorem}\label{T:closableresistance}
Assume that $(\overline{\mathcal{E}},\overline{\mathcal{F}})$ is a regular resistance form on $X$ and that $X$, equipped with the associated resistance metric, is  locally compact, separable and complete. Assume further that $(\mathcal{E},\mathcal{F})$ is induced by $(\overline{\mathcal{E}},\overline{\mathcal{F}})$. Then for any admissible  $\nu\in\mathcal{M}_+(X)$ we have $\mathcal{C}=\overline{\mathcal{F}}\cap C_0(X)$, the form $(\overline{\mathcal{E}},\mathcal{C})$ is closable on $L_2(X,\nu)$, and its closure $(\mathcal{E},\mathcal{F}^{(\nu)})$ is a symmetric regular Dirichlet form.
\end{theorem}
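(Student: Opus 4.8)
The plan is to reduce the statement to the theory of resistance forms, i.e. to \cite[Lemma 9.2 and Theorem 9.4]{Ki12}, the single analytic input being the resistance estimate. Let $R$ denote the resistance metric associated with $(\overline{\mathcal{E}},\overline{\mathcal{F}})$; recall that
\[|u(x)-u(y)|^2\leq R(x,y)\,\overline{\mathcal{E}}(u),\qquad u\in\overline{\mathcal{F}},\ x,y\in X,\]
and that $(\overline{\mathcal{F}}/\mathbb{R},\overline{\mathcal{E}})$ is a Hilbert space, i.e. the resistance form is complete modulo additive constants. Since the topology of $X$ is, by assumption, the one determined by $R$, the estimate shows immediately that every finite-energy function is continuous, so $\overline{\mathcal{F}}\subset C(X)$. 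I would then dispatch the three assertions in turn: the identification of $\mathcal{C}$, closability on $L_2(X,\nu)$, and the structural properties of the closure.

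For the identification $\mathcal{C}=\overline{\mathcal{F}}\cap C_0(X)$, I would use that by the construction of the induced form the domain $\mathcal{F}$ consists, after passage to the continuous versions just produced, of the $L_2(X,\mu)$-functions in $\overline{\mathcal{F}}$; intersecting with $C_0(X)$ gives $\mathcal{C}=\mathcal{F}\cap C_0(X)=\overline{\mathcal{F}}\cap C_0(X)$, which is exactly \cite[Lemma 9.2]{Ki12} in the present language. Admissibility of $\nu$ enters only to guarantee that $C_0(X)$-functions lie in $L_2(X,\nu)$ and that $\nu$ has full support.

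The heart of the matter is closability of $(\overline{\mathcal{E}},\mathcal{C})$ on $L_2(X,\nu)$. Let $(u_n)\subset\mathcal{C}$ be $\overline{\mathcal{E}}$-Cauchy with $u_n\to 0$ in $L_2(X,\nu)$. By completeness of $(\overline{\mathcal{F}}/\mathbb{R},\overline{\mathcal{E}})$ there is $w\in\overline{\mathcal{F}}$ with $\overline{\mathcal{E}}(u_n-w)\to 0$, hence $\overline{\mathcal{E}}(u_n)\to\overline{\mathcal{E}}(w)$ by the reverse triangle inequality for the seminorm $\overline{\mathcal{E}}^{1/2}$. It thus suffices to show that $w$ is constant. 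Fixing $p\in X$, the resistance estimate applied to $u_n-w$ gives
\[|(u_n-u_n(p))(x)-(w-w(p))(x)|^2\leq R(x,p)\,\overline{\mathcal{E}}(u_n-w)\longrightarrow 0\]
for every $x$, so $u_n-u_n(p)\to w-w(p)$ pointwise on $X$. Passing to a subsequence along which $u_n\to 0$ $\nu$-a.e., at any such point $x_0$ one reads off $u_n(p)\to-(w-w(p))(x_0)$; since the left-hand side is independent of $x_0$, the continuous function $w-w(p)$ is constant on a set of full $\nu$-measure, hence, by full support of $\nu$ and continuity, constant on all of $X$. Therefore $\overline{\mathcal{E}}(w)=0$ and $\overline{\mathcal{E}}(u_n)\to 0$, which is closability.

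Finally, to identify the closure $(\mathcal{E},\mathcal{F}^{(\nu)})$ as a symmetric local regular Dirichlet form I would invoke \cite[Theorem 9.4]{Ki12}: regularity holds because $\mathcal{C}=\overline{\mathcal{F}}\cap C_0(X)$ is by construction a core, dense in $\mathcal{F}^{(\nu)}$ and uniformly dense in $C_0(X)$; symmetry and the Markov property are inherited from $\overline{\mathcal{E}}$; and locality is inherited from the assumed locality of $(\overline{\mathcal{E}},\overline{\mathcal{F}})$. I expect the closability step to be the main obstacle, the delicate points being the bookkeeping of the additive constants $u_n(p)$ inherent in the resistance seminorm and the use of the admissibility of $\nu$ together with $R$-continuity to upgrade ``constant $\nu$-a.e.''\ to ``constant everywhere''; the remaining assertions are either definitional or direct citations of \cite{Ki12}.
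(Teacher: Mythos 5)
Your proposal is correct, but it is not what the paper does: the paper offers no argument at all for this theorem, stating only that it ``basically is a reformulation of \cite[Lemma 9.2 and Theorem 9.4]{Ki12}'' and quoting it. You, by contrast, reconstruct the substantive step — closability of $(\overline{\mathcal{E}},\mathcal{C})$ on $L_2(X,\nu)$ — from the two defining properties of a resistance form: the resistance estimate $|u(x)-u(y)|^2\leq R(x,y)\,\overline{\mathcal{E}}(u)$ and the completeness of $(\overline{\mathcal{F}}/\mathbb{R},\overline{\mathcal{E}})$. That argument is sound: the $\overline{\mathcal{E}}$-limit $w$ of the Cauchy sequence exists modulo constants; pointwise convergence of $u_n-u_n(p)$ to $w-w(p)$ follows from the resistance estimate; passing to a $\nu$-a.e.\ convergent subsequence pins down the constants and shows $w-w(p)$ is constant on a set of full $\nu$-measure; and admissibility of $\nu$ (full support) together with $R$-continuity of $w$ upgrades this to constancy everywhere, so $\overline{\mathcal{E}}(u_n)\to\overline{\mathcal{E}}(w)=0$. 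This is essentially the mechanism inside Kigami's proof, so what your write-up buys is self-containedness and transparency about exactly where each hypothesis (locally compact $R$-topology, full support of $\nu$, Hilbert-space completeness modulo constants) is used, at the price of redoing work the paper outsources. One minor imprecision: your gloss that $\mathcal{F}$ ``consists of the $L_2(X,\mu)$-functions in $\overline{\mathcal{F}}$'' is not the definition of the induced form (which is a closure of $(\overline{\mathcal{E}},\overline{\mathcal{F}}\cap C_0(X))$), and the identity $\mathcal{F}\cap C_0(X)=\overline{\mathcal{F}}\cap C_0(X)$ is precisely the nontrivial content of \cite[Lemma 9.2]{Ki12}; since you cite that lemma, the step stands, but it should be presented as a citation rather than as definitional. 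Likewise, regularity, the Markov property, and locality of the closure are correctly deferred to \cite[Theorem 9.4]{Ki12}, matching the paper's own reliance on that result.
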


\section{Sobolev spaces of functions and vector fields}\label{S:Sobo}

We will now introduce $L_p$-spaces of vector fields and related Sobolev spaces of functions. Throughout this section it is assumed that $(\mathcal{E},\mathcal{F})$ is a symmetric regular Dirichlet form, $m$ is a measure satisfying Assumption \ref{A:energydominant}, and $(\mathcal{E},\mathcal{C})$ is closable on $L_2(X,m)$.\\

For a measurable section $v=(v(x))_{x\in X}$ let 
\[\left\|v\right\|_{L_p(X,m,(\mathcal{H}_x)_{x\in X})}:=\left(\int_X\left\|v_x\right\|^p_{\mathcal{H}_x}m(dx)\right)^{1/p}\]
for $1\leq p<\infty$ and 
\[\left\|v\right\|_{L_\infty(X,m,(\mathcal{H}_x)_{x\in X})}:=\esssup_{x\in X}\left\|v_x\right\|_{\mathcal{H}_x}\] 
and define the spaces $L_p(X,m,(\mathcal{H}_x)_{x\in X})$, $1\leq p\leq\infty$ as the collections of the respective equivalence classes of $m$-a.e. equal sections having finite norm. By a variant of the classical pointwise Riesz-Fischer argument they form Banach spaces, separable for $1\leq p<\infty$. Note that $\mathcal{H}=L_2(X,m,(\mathcal{H}_x)_{x\in X})$. For $1< p< \infty$ and $1/p+1/q=1$ the H\"older inequality 
\begin{equation}\label{E:Hoelder}
\left|\int_X\left\langle v_x, w_x\right\rangle_{\mathcal{H}_x} m(dx)\right| 
\leqslant \left(\int_X\left\|v_x\right\|_{\mathcal{H}_x}^pm(dx)\right)^{1/p}\left(\int_X\left\|w_x\right\|_{\mathcal{H}_x}^qm(dx)\right)^{1/q}
\end{equation}
for $v\in L_p(X,m,(\mathcal{H}_x))$, $w\in L_q(X,m,(\mathcal{H}_x))$ follows from Cauchy-Schwarz in $\mathcal{H}$. We will write 
$\left\langle w,v\right\rangle$ for the the integral on the left hand side. 

If $f\in \mathcal{B}_b(X)$ and $v=(v(x))_{x\in X}\in L_p(X,m,(\mathcal{H}_x))$ then the product $fv$ is defined as the measurable section $x\mapsto f(x)v_x$, i.e. pointwise. Since
\[\left\|fv\right\|_{L_p(X,m,(\mathcal{H}_x)_x)}=\left(\int_X\left\|f(x)v_x\right\|_{\mathcal{H}_x}^pm(dx)\right)^{1/p}\leq \left\|f\right\|_{L_\infty(X,m)}\left\|v\right\|_{L_p(X,m,(\mathcal{H}_x)_x)}\]
the operation $v\mapsto fv$ is linear and bounded in $L_p(X,m,(\mathcal{H}_x))$ and is continuous with respect to the pointwise convergence of uniformly bounded sequences, i.e. if $\sup_n\left\|f_n\right\|_{L_\infty(X,m)}<\infty$ and $\lim_n f_n=f$ pointwise $m$-a.e. on $X$, then $\lim_n f_nv=fv$ in $L_p(X,m,(\mathcal{H}_x))$ for all $v\in L_p(X,m,(\mathcal{H}_x))$. For $p=2$ this multiplication coincides with (\ref{E:right}). 
 
We will make the following additional assumption.  
\begin{assumption}\label{A:core}
For any $1<p<\infty$ there is a space $\mathcal{C}_p \subset \mathcal{C}\cap L_p(X,m)$ such that 
\begin{enumerate}
\item[(COREI)] $\mathcal{C}_p$ is dense in $L_p(X,m)$, 
\item[(COREII)] $\mathcal{C}_p\otimes\mathcal{C}_p$ is dense in $L_p(X,m,(\mathcal{H}_x)_x)$ and
\item[(COREIII)] for all $f\in\mathcal{C}_p$, the energy measure $\Gamma(f)$ is absolutely continuous with respect to $m$ with density
\[\Gamma(f)=\frac{d\Gamma(f)}{dm} \in L_{p/2}(X,m).\]
\end{enumerate}
\end{assumption}
Let $\partial_p$ denote the restriction of $\partial$ to $\mathcal{C}_p$. By (COREIII) the operator $\partial_p$ maps $\mathcal{C}_p$ into $L_p(X,m,(\mathcal{H}_x)_x)$. Recall that in this section we assume the closability of $(\mathcal{E},\mathcal{C})$ on $L_2(X,m)$. As an immediate consequence $(\partial_2,\mathcal{C}_2)$ is seen to be a closable operator from $L_2(X,m)$ to $L_2(X,m,(\mathcal{H}_x))$, because also $(\mathcal{E},\mathcal{C}_2)$ is closable on $L_2(X,m)$. For $2<p<\infty$ we have the following result.

\begin{theorem}\label{P:closable}
Let the conditions of Assumption \ref{A:core} be valid and assume $m(X)<\infty$. Then $(\partial_p, \mathcal{C}_p)$ is a closable operator from $L_p(X,m)$ to $L_p(X,m,(\mathcal{H}_x))$ for any $2<p<\infty$.
\end{theorem}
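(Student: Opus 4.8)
The plan is to deduce the $L_p$-closability from the $L_2$-closability that is already available under the standing hypotheses of this section, exploiting the continuous embeddings that the finiteness of $m$ provides. Recall that $(\partial_p,\mathcal{C}_p)$ is closable precisely when, for every sequence $(u_n)\subset\mathcal{C}_p$ with $u_n\to 0$ in $L_p(X,m)$ and $(\partial_p u_n)_n$ convergent in $L_p(X,m,(\mathcal{H}_x))$, the limit of $(\partial_p u_n)_n$ vanishes. So I would start from such a sequence, say $u_n\to 0$ in $L_p(X,m)$ and $\partial_p u_n\to w$ in $L_p(X,m,(\mathcal{H}_x))$, and the entire task reduces to showing $w=0$.

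The key observation is that, because $m(X)<\infty$ and $p>2$, H\"older's inequality with exponents $p/2$ and $p/(p-2)$ gives for every measurable section $v$ the bound $\left\|v\right\|_{L_2(X,m,(\mathcal{H}_x))}\leq m(X)^{1/2-1/p}\left\|v\right\|_{L_p(X,m,(\mathcal{H}_x))}$, and likewise $\left\|u\right\|_{L_2(X,m)}\leq m(X)^{1/2-1/p}\left\|u\right\|_{L_p(X,m)}$ for functions. Hence the natural maps $L_p(X,m)\hookrightarrow L_2(X,m)$ and $L_p(X,m,(\mathcal{H}_x))\hookrightarrow L_2(X,m,(\mathcal{H}_x))=\mathcal{H}$ are continuous. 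Applying these to the sequence above yields $u_n\to 0$ in $L_2(X,m)$ and $\partial_p u_n\to w$ in $\mathcal{H}$ (the $L_2$-limit being the image of the $L_p$-limit $w$ under the embedding).

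Next I would invoke the $L_2$-closability. Since $\mathcal{C}_p\subset\mathcal{C}$, each $u_n$ lies in $\mathcal{C}$ and $\partial_p u_n=\partial u_n$. By Corollary \ref{C:simplecons} (ii) the norms satisfy $\mathcal{E}(f)\leq\left\|\partial f\right\|_{\mathcal{H}}^2\leq 2\mathcal{E}(f)$, so the standing assumption that $(\mathcal{E},\mathcal{C})$ is closable on $L_2(X,m)$ is equivalent to the closability of $(\partial,\mathcal{C})$ as an operator from $L_2(X,m)$ into $\mathcal{H}$. Applying this to $(u_n)_n$, which converges to $0$ in $L_2(X,m)$ and whose images $\partial u_n$ converge to $w$ in $\mathcal{H}$, forces $w=0$ in $\mathcal{H}$, i.e.\ $w_x=0$ for $m$-a.e.\ $x$. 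As $w$ vanishes $m$-a.e., it is the zero element of $L_p(X,m,(\mathcal{H}_x))$ as well, which is exactly what closability of $(\partial_p,\mathcal{C}_p)$ requires.

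The argument is a soft functional-analytic reduction, so I do not anticipate a serious obstacle; the one point that must not be overlooked is that the hypothesis $m(X)<\infty$ enters essentially, since it is precisely what makes $L_p\hookrightarrow L_2$ continuous for $p>2$, and without it the reduction to the $L_2$ setting collapses. A secondary point worth stating carefully is that one should use the closability of $(\partial,\mathcal{C})$ on the full core $\mathcal{C}$ (equivalently, of the form $(\mathcal{E},\mathcal{C})$) rather than merely that of the restriction $(\partial_2,\mathcal{C}_2)$, because the approximating functions $u_n$ are only known to belong to $\mathcal{C}_p\subset\mathcal{C}$.
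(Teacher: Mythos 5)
Your proposal is correct, and its first half is exactly the paper's own reduction: since $m(X)<\infty$ and $p>2$, the embeddings $L_p(X,m)\hookrightarrow L_2(X,m)$ and $L_p(X,m,(\mathcal{H}_x))\hookrightarrow\mathcal{H}$ are continuous, so $(u_n)_n$ is $\mathcal{E}$-Cauchy and tends to zero in $L_2(X,m)$, and the standing closability of $(\mathcal{E},\mathcal{C})$ on $L_2(X,m)$ gives $\mathcal{E}(u_n)\to 0$. (The paper's proof writes $L_2(X,\mu)$ at this point, but that is evidently a slip for $L_2(X,m)$; your reading is the coherent one, and your remark that one needs closability on the full core $\mathcal{C}$ rather than on $\mathcal{C}_2$ is also implicit in the paper, since $\mathcal{C}_p$ need not be contained in $\mathcal{C}_2$.) Where you genuinely diverge is the concluding step. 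The paper finishes by a duality argument: it pairs the $L_p$-limit $v$ against test fields $f\otimes g\in\mathcal{C}_q\otimes\mathcal{C}_q$ with $1/p+1/q=1$, shows each pairing vanishes via the divergence bound $|\partial^\ast(g\partial f)(u_n)|\leq \sqrt{2}\,\sup_X|g|\,\mathcal{E}(u_n)^{1/2}\mathcal{E}(f)^{1/2}$, and then invokes the density assumption (COREII) \emph{at the conjugate exponent} $q$ to conclude $v=0$. You instead convert $\mathcal{E}(u_n)\to 0$ directly into $\left\|\partial u_n\right\|_{\mathcal{H}}\to 0$ via the two-sided comparison of Corollary \ref{C:simplecons} (ii), and identify the $L_p$- and $L_2$-limits as the same $m$-a.e.\ section class, forcing $w=0$. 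Your route is shorter and shows something the paper's proof obscures: under these hypotheses the conclusion does not actually use (COREII) at all, only (COREIII) (so that $\partial_p$ maps into $L_p$), the $L_2$-closability, and $m(X)<\infty$. What the paper's weak-pairing argument buys in exchange is robustness: it identifies the limit by testing against a dense subspace of the dual, a pattern that would survive if one only knew weak convergence of $(\partial_p u_n)_n$ in $L_p(X,m,(\mathcal{H}_x))$, and it keeps the divergence operator and the dual core $\mathcal{C}_q$ in view, which is the structure exploited later in the integration-by-parts definition of $\partial_q^\ast$.
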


\begin{proof} 
Let $(u_n)\subset \mathcal{C}_p$ be a sequence of functions converging to zero in $L_p(X,m)$ and such that $(\partial_p u_n)$ is Cauchy in $L_p(X,m,(\mathcal{H}_x))$. As the latter space is complete, a unique limit $v:=\lim_n \partial_p u_n\in L_p(X,m,(\mathcal{H}_x))$ exists. The measure $m$ being finite, $(u_n)_n$ is seen to be $\mathcal{E}$-Cauchy and convergent to zero in $L_2(X,\mu)$ what implies that $\mathcal{E}(u_n)$ goes to zero. For an arbitrary member $f\otimes g$ of $\mathcal{C}_q\otimes \mathcal{C}_q$ with $1/p+1/q=1$ we have 
\[\left\langle f\otimes g,v\right\rangle=\lim_n \left\langle f\otimes g,\partial_p u_n\right\rangle=\lim_n \left\langle f\otimes g,\partial u_n\right\rangle_\mathcal{H}=-\lim_n \partial^\ast(g\partial f)(u_n)=0\]
because
\[|\partial^ \ast(g\partial f)(u_n)|\leq \sqrt{2} \sup_{x\in X}|g(x)|\mathcal{E}(u_n)^{1/2}\mathcal{E}(f)^{1/2}.\]
By (COREII) therefore $\lim_n \partial_p u_n=0$ in $L_p(X,m,(\mathcal{H}_x))$. 
\end{proof}

For the rest of this section we take Assumption \ref{A:core} for granted and suppose that $2\leq p<\infty$ and $(\partial_p, \mathcal{C}_p)$ is closable. Its smallest closed extension is denoted by $(\partial_p, dom\:\partial_p)$, which then is a densely defined closed linear operator from $L_p(X,m)$ into $L_p(X,m,(\mathcal{H}_x))$. Note that for any simple vector field $g\partial f$ with $f,g\in\mathcal{C}_p$
we then have
\begin{equation}\label{E:simplep}
\left\|g\partial f\right\|_{L_p(X,m,(\mathcal{H}_x)_x)}=\left(\int_X |g(x)|^p \Gamma_x(f)^{p/2}m(dx)\right)^{1/p}.
\end{equation}
We write $H_0^{1,p}(X,m)$ for $dom\:\partial_p$, equipped with the norm 
\[\left\| u\right\|_{1,p}:=\left(\int_X\left(|u(x)|^p+\left\|\partial_x u\right\|_{\mathcal{H}_x}^p\right)m(dx)\right)^{1/p}, \ \ u\in H_0^{1,p}(X,m).\]
As $\left\|\cdot\right\|_{1,p}$ is equivalent to the graph norm of $\partial_p$, $H_0^{1,p}(X,m)$ is a closed subspace of $L_p(X,m)$, clearly Banach, and continuously embedded in $L_p(X,m)$. For $p=2$ we observe $H^{1,2}_0(X,m)=\mathcal{F}^{(m)}$.

Now the divergence operator $\partial^\ast$ may be seen as an unbounded operator $$\partial^\ast_q:L_q(X,m,(\mathcal{H}_x)) \to L_q(X,m),$$ where $1/p+1/q=1$, and similarly as in (\ref{E:ibp2}) we obtain an integration by parts formula by saying that an element $v\in L_q(X,m,(\mathcal{H}_x)_{x\in X})$ is in $dom\:\partial^\ast_q$ if there is some $v^\ast\in L_q(X,m)$ such that $\left\langle u,v^\ast\right\rangle=-\left\langle \partial u, v\right\rangle$ for all $u\in \mathcal{C}_p$. We write $\partial_q^\ast v:=v^\ast$ and 
\[\left\langle u, \partial_q^\ast v\right\rangle=-\left\langle \partial u, v\right\rangle\ , u\in \mathcal{C}_p.\]
By duality $dom\:\partial_q^\ast$ is then weakly dense in $L_q(X,m)$, cf. \cite{RS}. 

\begin{remark}
We provide a brief remark about related $p$-energies for $2\leq p<\infty$. The mapping 
\[f\mapsto \mathcal{E}_p(f):=\int_X\Gamma(f)^{p/2}dm, \ f\in H_0^{1,p}(X,m),\] 
is usually referred to as the \emph{$p$-energy functional}. One may define a functional of two arguments by 
\[\mathcal{E}_p(f,g):=\int_X\Gamma(f)^{p/2-1}\Gamma(f,g)dm,\ \ f,g\in H_0^{1,p}(X,m).\]
Note that $\mathcal{E}_p(f,f)=\mathcal{E}_p(f)$ and that by H\"older's inequality, $|\mathcal{E}_p(f,g)|\leq\mathcal{E}_p(f)^{(p-1)/p}\mathcal{E}_p(g)^{1/p}$. For functions $\varphi,\psi\in dom\:L^{(m)}$ we observe $\mathcal{E}_p(\varphi,\psi):=\frac{1}{p}\frac{d}{dt}\mathcal{E}_p(\varphi+t\psi)|_{t=0}$.
A generalized $p$-Laplacian may be defined in the weak sense by associating to $f\in H_0^{1,p}(X,m)$ the element $\Delta_pf$ of the dual space $(H_0^{1,p}(X,m))^\ast$ given by $(\Delta_p f)(g):=-\mathcal{E}_p(f,g)=-\left\langle \left\|\partial_\cdot f\right\|_{\mathcal{H}_\cdot}^{p-2}\partial f,\partial g\right\rangle_\mathcal{H}$,
$g\in H_0^{1,p}(X,m)$. Integrating by parts we obtain $\Delta_p f=\partial_p^\ast\left(\left\|\partial_{\cdot}f\right\|_{\mathcal{H}_\cdot}^{p-2}\partial f\right)$. If $L=\Delta$ is the classical Laplacian on $\mathbb{R}^n$ and $m(dx)=dx$ the $n$-dimensional Lebesgue measure, then $\Delta_p$ is the usual $p$-Laplacian.

Another definition for a $p$-energy on Sierpinski gasket type fractals had been proposed in \cite{HPS04}. It had been constructed by solving an abstract renormalization problem whose solution allows to define a $p$-energy as the limit of an rescaled sequence of discrete $p$-energies on 
approximating graphs. A related $p$-Laplacian had been investigated in \cite{StrW04}. However, it is not difficult to see that the energy rescaling is different and therefore the domains of this $p$-energy and the one defined above will generally be disjoint.
\end{remark}

\section{Existence of continuous coordinates}\label{S:coords}

One possible way to verify Assumption \ref{A:core} in a non-classical contexts is to use abstract continuous coordinates. Let $(\mathcal{E},\mathcal{F})$ be a symmetric local regular Dirichlet form and $m$ is a measure according to Assumption \ref{A:energydominant}. For the measure $\widetilde{m}$, as constructed in Lemma \ref{L:lem}, we will actually prove the existence of coordinates. We will now work under the following additional assumption:

\begin{assumption}\label{A:goodmeasure} In addition to Assumption~\ref{A:energydominant}
we assume that the measure $m$ is finite and does not charge sets of zero capacity.
\end{assumption}

Let $\left\lbrace \varphi_i\right\rbrace_{i\in I}\subset \mathcal{C}$ be a set of functions indexed by some set $I\neq \emptyset$. We say that $\left\lbrace \varphi_i\right\rbrace_{i\in I}$ is a set of \emph{continuous coordinates} for $\mathcal{E}$ with respect to $m$ if the following conditions are satisfied: 
\begin{enumerate}
\item[(COI)] for all $i,j\in I$, $\Gamma(\varphi_i,\varphi_j)\in L_1(X,m)\cap L_\infty(X,m)$,
\item[(COII)]  The space $\mathcal{F}C_b^1(X,\left\lbrace \varphi_i\right\rbrace)$ of all cylinder functions of the form
\[f=F(\varphi_{i_1},...,\varphi_{i_m}), \ \ i_1,...,i_k\in I\]
with suitable $k\in\mathbb{N}$ and $F\in C_b^1(\mathbb{R}^k)$, $F(0)=0$, is dense in $\mathcal{C}$ with respect to the norm in $\mathcal{F}$, i.e $\mathcal{E}_1$-dense.
\end{enumerate}
 
For cylinder functions $f=F(\varphi_{i_1},...\varphi_{i_m})$ and $g=G(\varphi_{j_1},...,\varphi_{j_n})$ with $F\in C_b^1(\mathbb{R}^m)$ and $G\in C_b^1(\mathbb{R}^n)$ satisfying $F(0)=G(0)=0$, we then have
\[\Gamma(f,g)=\sum_{k=1}^m\sum_{l=1}^n\frac{\partial F}{\partial x_k}(\varphi_{i_1},...,\varphi_{i_m})\frac{\partial G}{\partial x_l}(\varphi_{j_1},...,\varphi_{j_n})\Gamma(\varphi_k,\varphi_l)\]
by the chain rule, cf. \cite[Theorem 3.2.2]{FOT94}. In particular, $\Gamma(f,g)$ is a member of $L_\infty(X,m)$ and has compact support.

From (COII) we can obtain further approximation and denseness results.

\begin{lemma}\label{L:boundedpointwise}
Suppose that $m$ satisfies Assumption \ref{A:goodmeasure} and that $\left\lbrace \varphi_i\right\rbrace_{i\in I}$ is a set of continuous coordinates for $\mathcal{E}$ with respect to $m$. Then any function $g\in \mathcal{C}$ can be approximated pointwise $m$-a.e. by a uniformly bounded sequence of functions from $\mathcal{F}C_b^1(X,\left\lbrace \varphi_i\right\rbrace)$.
\end{lemma}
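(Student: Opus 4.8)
The plan is to combine the $\mathcal{E}_1$-density furnished by (COII) with the capacity theory underlying Assumption~\ref{A:goodmeasure}, and then to enforce uniform boundedness by a smooth truncation that does not leave the cylinder class $\mathcal{F}C_b^1(X,\{\varphi_i\})$. Concretely, I would proceed in three moves: produce an $\mathcal{E}_1$-approximating sequence of cylinder functions; upgrade its convergence to pointwise $m$-a.e.\ convergence along a subsequence; and finally truncate to make the sequence uniformly bounded while keeping both the cylinder structure and the $m$-a.e.\ convergence.

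First, given $g\in\mathcal{C}$, condition (COII) directly yields a sequence $(g_n)\subset\mathcal{F}C_b^1(X,\{\varphi_i\})$ with $\mathcal{E}_1(g-g_n)\to 0$. The decisive second step is to turn this Hilbert-space convergence into pointwise $m$-a.e.\ convergence. Since the reference measure $\mu$ and the energy-dominant measure $m$ may be mutually singular, one cannot route this through $L_2(X,\mu)$; instead I would use the $1$-capacity $\cpct$. By the capacitary convergence result for $\mathcal{E}_1$-convergent sequences, cf.\ \cite[Theorem~2.1.4]{FOT94}, a subsequence $(g_{n_k})$ has quasi-continuous versions converging q.e.\ to a quasi-continuous version of $g$. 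As $g\in C_0(X)$ is continuous, it is its own quasi-continuous version, so $g_{n_k}\to g$ q.e. Because Assumption~\ref{A:goodmeasure} forbids $m$ to charge sets of zero capacity, this q.e.\ convergence is in particular $m$-a.e.\ convergence: $g_{n_k}\to g$ $m$-a.e.

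For the third step I would install the uniform bound without destroying the cylinder structure. Set $M:=\sup_X|g|<\infty$ and fix $\psi\in C_b^1(\mathbb{R})$ with $\psi(t)=t$ for $|t|\leq M$ and $|\psi|\leq M+1$ throughout (a smooth truncation; note $\psi(0)=0$). Writing $g_{n_k}=F_k(\varphi_{i_1},\ldots,\varphi_{i_{m_k}})$ with $F_k\in C_b^1$ and $F_k(0)=0$, the composition $\psi\circ F_k$ again lies in $C_b^1$ and vanishes at the origin, so $h_k:=\psi\circ g_{n_k}=(\psi\circ F_k)(\varphi_{i_1},\ldots,\varphi_{i_{m_k}})$ is once more a member of $\mathcal{F}C_b^1(X,\{\varphi_i\})$, with $\sup_X|h_k|\leq M+1$ uniformly in $k$. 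Finally, since $\psi$ is continuous and agrees with the identity on the range $[-M,M]$ of $g$, the $m$-a.e.\ convergence $g_{n_k}\to g$ gives $h_k=\psi(g_{n_k})\to\psi(g)=g$ $m$-a.e., which is the assertion.

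The main obstacle is the second step, namely passing from $\mathcal{E}_1$-convergence to pointwise $m$-a.e.\ convergence. The whole difficulty is that $\mu$ and $m$ are in general mutually singular, so no $L_2(X,\mu)$-type extraction of an $m$-a.e.\ convergent subsequence is available; the only mechanism that couples the two is capacity. The correct tool is therefore the q.e.\ convergence of quasi-continuous versions of an $\mathcal{E}_1$-convergent sequence, and the hypothesis in Assumption~\ref{A:goodmeasure} that $m$ charges no set of zero capacity is exactly what allows q.e.\ statements to be read off as $m$-a.e.\ statements. The truncation step is routine by comparison, the only points to check being that composition with a $C_b^1$ function preserves membership in the cylinder class and the normalization $F(0)=0$.
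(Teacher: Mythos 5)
Your proposal is correct and follows essentially the same path as the paper's proof: use (COII) to get an $\mathcal{E}_1$-approximating sequence of cylinder functions, extract a q.e.\ convergent subsequence via \cite[Theorem 2.1.4]{FOT94}, convert q.e.\ to $m$-a.e.\ convergence using the hypothesis that $m$ charges no set of zero capacity, and then post-compose with a $C_b^1$ truncation fixing the range of $g$ to enforce uniform boundedness while staying inside $\mathcal{F}C_b^1(X,\{\varphi_i\})$. Your explicit check that the truncation preserves the normalization $F(0)=0$ is a detail the paper leaves implicit, but otherwise the two arguments coincide.
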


\begin{proof}
Let $g\in \mathcal{C}$. By (COII) there is a sequence $(g_n)_n\subset \mathcal{F}C_b^1(X,\left\lbrace \varphi_i\right\rbrace)$ converging to $g$ in $\mathcal{F}$ with respect to the $\mathcal{E}_1$-norm. Switching to a subsequence if necessary we may assume $(g_n)_n$ also converges to $g$ q.e. by \cite[Theorem 2.1.4]{FOT94}. As $m$ does not charge sets of zero capacity, $g$ is also the $m$-a.e. pointwise limit of $(g_n)_n$. Now set \[s:=\sup_{x\in X}|g(x)|\] 
and let $\phi\in C_b^1(\mathbb{R})$ be a monotone function that satisfies 
\[\phi(y)=
\begin{cases}
-2s \ \text{ if $y<-2s$}\\
y \ \text{ if $-s\leq y\leq s$}\\
2s \ \text{ if $y>2s$}.
\end{cases}\]
Note that $\phi(g_n)\in \mathcal{F}C_b^1(X,\left\lbrace \varphi_i\right\rbrace)$ for any $n$ and $\sup_n\sup_{X}|\phi(g_n)|\leq 2s$. Also the functions $\phi(g_n)$ converge to $g$ $m$-a.e. pointwise.
\end{proof}

\begin{corollary} Suppose that $m$ satisfies Assumption \ref{A:goodmeasure} and that $\left\lbrace \varphi_i\right\rbrace_{i\in I}$ is a set of continuous coordinates for $\mathcal{E}$ with respect to $m$. Then for any $1< p<\infty$ conditions (COREI) and (COREIII) in Assumption \ref{A:core} are satisfied with $\mathcal{C}_p=\mathcal{F}C_b^1(X,\left\lbrace \varphi_i\right\rbrace)$.
\end{corollary}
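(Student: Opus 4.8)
The plan is to verify (COREI) and (COREIII) separately, both reducing to standard density and integrability facts made available by the finiteness of $m$ in Assumption~\ref{A:goodmeasure} and by the cylinder structure of $\mathcal{F}C_b^1(X,\left\lbrace\varphi_i\right\rbrace)$.

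First I would dispose of (COREIII), which is essentially immediate from the chain rule computation recorded just before the corollary. For $f=F(\varphi_{i_1},\dots,\varphi_{i_k})\in\mathcal{F}C_b^1(X,\left\lbrace\varphi_i\right\rbrace)$, the chain rule \cite[Theorem 3.2.2]{FOT94} expresses $\Gamma(f)$ as a finite sum of terms $\frac{\partial F}{\partial x_k}(\cdots)\frac{\partial F}{\partial x_l}(\cdots)\Gamma(\varphi_{i_k},\varphi_{i_l})$. The factors $\partial F/\partial x_k$ are bounded because $F\in C_b^1$, while by (COI) each $\Gamma(\varphi_{i_k},\varphi_{i_l})$ is absolutely continuous with respect to $m$ with density in $L_\infty(X,m)$. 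Hence $\Gamma(f)\ll m$ with density in $L_\infty(X,m)$. Since $m$ is finite, $L_\infty(X,m)\subset L_{p/2}(X,m)$, so (COREIII) holds for every $1<p<\infty$.

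Next I would treat (COREI), the density of $\mathcal{C}_p=\mathcal{F}C_b^1(X,\left\lbrace\varphi_i\right\rbrace)$ in $L_p(X,m)$, in two steps. The first step establishes that $\mathcal{C}$ itself is dense in $L_p(X,m)$: as $m$ is a Radon measure, $C_0(X)$ is dense in $L_p(X,m)$ for $1\le p<\infty$, and by regularity of $(\mathcal{E},\mathcal{F})$ the algebra $\mathcal{C}$ is uniformly dense in $C_0(X)$; since $m(X)<\infty$, uniform convergence implies $L_p(X,m)$-convergence, so $\mathcal{C}$ is $L_p$-dense. The second step passes from $\mathcal{C}$ to $\mathcal{C}_p$: given $g\in\mathcal{C}$, Lemma~\ref{L:boundedpointwise} furnishes a uniformly bounded sequence $(g_n)_n\subset\mathcal{F}C_b^1(X,\left\lbrace\varphi_i\right\rbrace)$ with $g_n\to g$ pointwise $m$-a.e.; because the $g_n$ are uniformly bounded and $m$ is finite, dominated convergence (with a constant dominating function, which lies in $L_p(X,m)$) yields $g_n\to g$ in $L_p(X,m)$. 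Combining the two steps gives (COREI).

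I expect no serious obstacle, the argument being a routine combination of regularity, finiteness of $m$, and Lemma~\ref{L:boundedpointwise}. The only point requiring a little care is the second step of (COREI), where one must invoke the uniform boundedness of the approximating cylinder functions supplied by Lemma~\ref{L:boundedpointwise}, rather than any energy estimate, in order to apply dominated convergence in $L_p(X,m)$, since pointwise $m$-a.e. convergence alone would not suffice.
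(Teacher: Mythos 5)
Your proposal is correct and follows essentially the same route as the paper: (COREIII) is read off from (COI) via the chain-rule expression for energy measures of cylinder functions (which the paper records just before the corollary), and (COREI) is obtained by combining density of $C_0(X)$ in $L_p(X,m)$, regularity of $(\mathcal{E},\mathcal{F})$, finiteness of $m$, and Lemma~\ref{L:boundedpointwise} with dominated convergence. Your write-up merely makes explicit the steps the paper compresses, including the correct observation that it is the uniform boundedness from Lemma~\ref{L:boundedpointwise}, not an energy estimate, that justifies the passage to the $L_p$ limit.
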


\begin{proof}
(COREIII) follows directly from (COI). Hence it suffices to prove that $\mathcal{F}C_b^1(X,\left\lbrace \varphi_i\right\rbrace)$ is dense in $L_p(X,m)$, $1<p<\infty$. By the denseness of $C_0(X)$ in $L_p(X,m)$, the finiteness of $m$ and the regularity of $(\mathcal{E},\mathcal{F})$ it is enough to show any function $g\in \mathcal{C}$ can be approximated in $L_p(X,m)$-norm by a sequence of functions from $\mathcal{F}C_b^1(X,\left\lbrace \varphi_i\right\rbrace)$. As $m$ is finite, this follows from Lemma \ref{L:boundedpointwise}.
\end{proof}

Another consequence concerns the spaces $L_p(X,m,(\mathcal{H}_x)_x)$.
\begin{lemma}
Suppose that $m$ satisfies Assumption \ref{A:goodmeasure} and that $\left\lbrace \varphi_i\right\rbrace_{i\in I}$ is a set of continuous coordinates for $\mathcal{E}$ with respect to $m$. Then for any $1<p<\infty$, 
\[\mathcal{S}:=\lin\left\lbrace g\partial f: f,g\in\mathcal{F}C_b^1(X,\left\lbrace \varphi_i\right\rbrace)\right\rbrace\] is a dense subspace of $L_p(X,m,(\mathcal{H}_x)_x)$. In particular, condition (CORE II) in Assumption \ref{A:core} holds with $\mathcal{C}_p=\mathcal{F}C_b^1(X,\left\lbrace \varphi_i\right\rbrace)$.
\end{lemma}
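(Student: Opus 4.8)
The plan is to reduce the statement, by a duality argument, to the fiberwise density of the coordinate differentials $[\partial\varphi_i]_x$, and then to pass from arbitrary $L_p$-coefficients to cylinder coefficients using (COI) together with the $L_p$-density of cylinder functions already established.

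First I would pass to a countable subfamily of coordinates. By (COII) and the separability of $(\mathcal{F},\mathcal{E}_1)$ one may choose a countable $\mathcal{E}$-dense set $\mathcal{A}_0\subset\mathcal{F}C_b^1(X,\{\varphi_i\})$, and only countably many coordinates, indexed by some countable $I_0\subset I$, occur among the members of $\mathcal{A}_0$. Since $(\mathcal{E},\mathcal{F})$ is strongly local, the chain rule \cite[Theorem 3.2.2]{FOT94}, read as an equality of $L_1(X,m)$-densities, yields for $a=F(\varphi_{i_1},\dots,\varphi_{i_k})\in\mathcal{A}_0$ the fiberwise identity $[\partial a]_x=\sum_j\partial_jF(\varphi_{i_1}(x),\dots,\varphi_{i_k}(x))\,[\partial\varphi_{i_j}]_x$ for $m$-a.e.\ $x$. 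As the span of $\{[\partial a]_x:a\in\mathcal{A}_0\}$ is dense in $\mathcal{H}_x$ (see the proof of Theorem \ref{T:coincide}, via the isometry \eqref{E:EberleIso} and Lemma \ref{L:equal}), it follows that $\{[\partial\varphi_i]_x:i\in I_0\}$ has dense span in $\mathcal{H}_x$ for $m$-a.e.\ $x$. Obtaining this fiberwise density for a \emph{countable} set of coordinates, so that a single $m$-null exceptional set serves all of them, is the crucial point.

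Next I would enlarge $\mathcal{S}$ to a more convenient space. Fix $i\in I_0$. Because $\Gamma_\cdot(\varphi_i)\in L_\infty(X,m)$ by (COI), for any $h\in L_p(X,m)$ the section $h\,\partial\varphi_i$ lies in $L_p(X,m,(\mathcal{H}_x)_x)$ and
\[
\left\|h\,\partial\varphi_i\right\|_{L_p(X,m,(\mathcal{H}_x)_x)}=\left(\int_X|h|^p\,\Gamma_x(\varphi_i)^{p/2}\,m(dx)\right)^{1/p}\le \left\|\Gamma_\cdot(\varphi_i)\right\|_{L_\infty(X,m)}^{1/2}\left\|h\right\|_{L_p(X,m)}.
\]
Since cylinder functions are dense in $L_p(X,m)$ (by the preceding corollary establishing (COREI)) and each $\varphi_i$ admits, after composition with a $C_b^1$-truncation equal to the identity on its range, a representative in $\mathcal{F}C_b^1(X,\{\varphi_i\})$ carrying the same differential, this estimate shows $g\,\partial\varphi_i\to h\,\partial\varphi_i$ in $L_p(X,m,(\mathcal{H}_x)_x)$ whenever $g\to h$ in $L_p(X,m)$, with $g\,\partial\varphi_i\in\mathcal{S}$. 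Hence the larger space $\mathcal{S}_1:=\lin\{h\,\partial\varphi_i:h\in L_p(X,m),\ i\in I_0\}$ is contained in the $L_p$-closure of $\mathcal{S}$.

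It therefore suffices to prove that $\mathcal{S}_1$ is dense in $L_p(X,m,(\mathcal{H}_x)_x)$, and here I would argue by duality. Identifying the dual of $L_p(X,m,(\mathcal{H}_x)_x)$ with $L_q(X,m,(\mathcal{H}_x)_x)$ through the pairing in \eqref{E:Hoelder} (as in the duality used in Section \ref{S:gradient}), let $v\in L_q(X,m,(\mathcal{H}_x)_x)$ annihilate $\mathcal{S}_1$. For fixed $i\in I_0$ the function $x\mapsto\left\langle[\partial\varphi_i]_x,v(x)\right\rangle_{\mathcal{H}_x}$ lies in $L_q(X,m)$ (again by (COI) and Cauchy--Schwarz), and $\int_X h\left\langle[\partial\varphi_i]_x,v(x)\right\rangle_{\mathcal{H}_x}m(dx)=0$ for all $h\in L_p(X,m)$ forces it to vanish $m$-a.e. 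Carrying this out for each of the countably many $i\in I_0$ gives $\left\langle[\partial\varphi_i]_x,v(x)\right\rangle_{\mathcal{H}_x}=0$ for all $i\in I_0$ and $m$-a.e.\ $x$; by the fiberwise density from the first step this yields $v(x)=0$ for $m$-a.e.\ $x$, i.e.\ $v=0$. Thus $\mathcal{S}_1$, and consequently $\mathcal{S}$, is dense. The main obstacle is the first step: securing fiberwise density from only countably many coordinates, which is precisely where the reduction via separability of $\mathcal{C}$ and the strong-locality chain rule at the level of fibers are needed; the passage to cylinder coefficients and the annihilator argument are then routine.
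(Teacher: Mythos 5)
Your argument is essentially correct, but it follows a genuinely different route from the paper's. The paper works globally: it first shows that the larger space $\mathcal{S}_0=\lin\left\lbrace g\partial f: f\in\mathcal{F}C_b^1(X,\left\lbrace\varphi_i\right\rbrace),\ g\in\mathcal{B}_b(X)\right\rbrace$ is dense in $\mathcal{H}$ (by $\mathcal{E}$-approximation of the first factors, using (COII)); it then transfers this to $L_p$-density by Hahn--Banach combined with a truncation trick, cutting an annihilator $\eta\in L_q(X,m,(\mathcal{H}_x)_x)$ down to $\mathbf{1}_{S_N}\eta\in\mathcal{H}$ on sets where $\left\|\eta_x\right\|_{\mathcal{H}_x}$ is bounded, the key point being that $\mathcal{S}_0$ is stable under multiplication by indicators, so $\mathcal{H}$-density already gives the contradiction; finally it replaces the $\mathcal{B}_b(X)$-coefficients by cylinder coefficients via uniformly bounded pointwise approximation (Lemma \ref{L:boundedpointwise}) and dominated convergence, using (\ref{E:simplep}), $\Gamma_\cdot(a)\in L_\infty(X,m)$ and $m(X)<\infty$. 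You instead argue fiberwise: the chain rule writes each cylinder differential $[\partial a]_x$ as a pointwise linear combination of the $[\partial\varphi_i]_x$, so the coordinate differentials span a dense subspace of $\mathcal{H}_x$ for $m$-a.e.\ $x$, and then scalar $L_p$--$L_q$ duality kills any annihilator of $\lin\left\lbrace h\,\partial\varphi_i\right\rbrace$. Your route produces a stronger and structurally cleaner intermediate statement (the $\partial\varphi_i$ generate the fibers a.e., and arbitrary $L_p$-coefficients are admissible), at the cost of invoking the chain rule at the level of fibers, which is legitimate here since locality is the standing assumption of this section; both proofs use the identification of the dual of $L_p(X,m,(\mathcal{H}_x)_x)$ with $L_q(X,m,(\mathcal{H}_x)_x)$ on the same (standard but unproved) footing.

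One step of yours needs repair. You justify the $m$-a.e.\ density of $\lin\left\lbrace[\partial a]_x: a\in\mathcal{A}_0\right\rbrace$ in $\mathcal{H}_x$ for your re-chosen countable family $\mathcal{A}_0\subset\mathcal{F}C_b^1(X,\left\lbrace\varphi_i\right\rbrace)$ by citing the proof of Theorem \ref{T:coincide}. That density, however, holds by construction only for the countable $\mathcal{E}$-dense family from which the fibers $\mathcal{H}_x$ were actually built in Section \ref{S:Energy}, and that family need not consist of cylinder functions; the paper establishes independence of the construction only with respect to the choice of versions (Lemma \ref{L:essentially} (ii)), not with respect to the choice of the generating family. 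The gap closes with a short additional argument: if $(a_{n,k})_k\subset\mathcal{F}C_b^1(X,\left\lbrace\varphi_i\right\rbrace)$ approximates the $n$-th member $f_n$ of the original family in $\mathcal{E}$, then
\[\int_X\Gamma_x(f_n-a_{n,k})\,m(dx)=\left\|\partial(f_n-a_{n,k})\right\|_{\mathcal{H}}^2\leq 2\,\mathcal{E}(f_n-a_{n,k})\longrightarrow 0,\]
so along a subsequence $\Gamma_x(f_n-a_{n,k_j})\to 0$ for $m$-a.e.\ $x$, i.e.\ $[\partial a_{n,k_j}]_x\to[\partial f_n]_x$ in $\mathcal{H}_x$ $m$-a.e.; running over the countably many $f_n$ shows that cylinder differentials, and hence by the chain rule the $[\partial\varphi_i]_x$, have dense span in $\mathcal{H}_x$ for $m$-a.e.\ $x$ in the fibers as originally constructed. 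With this insertion your proof goes through.
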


\begin{proof}  
Consider the space 
\[\mathcal{S}_0:=\lin\left\lbrace g\partial f: f\in\mathcal{F}C_b^1(X,\left\lbrace \varphi_i\right\rbrace), g\in\mathcal{B}_b(X)\right\rbrace,\]
it obviously contains $\mathcal{S}$. By (\ref{E:simplep}) it is easily seen that for any $1<p<\infty$, $\mathcal{S}_0$ is a subspace of $L_p(X,m,(\mathcal{H}_x)_x)$. We will prove it is dense. 

The space $\mathcal{S}_0$ is dense in the Hilbert space $\mathcal{H}=L_2(X,m,(\mathcal{H}_x)_x)$: By the definition of $\mathcal{H}$, it suffices to approximate finite linear combinations $\sum_i a_i\otimes b_i\in\mathcal{C}\otimes \mathcal{B}_b(X)$. For fixed $i$, let $(a_i^{(m)})_m\subset\mathcal{F}C_b^1(X,\left\lbrace \varphi_i\right\rbrace)$ be a sequence approximating $a_i$ in $\mathcal{E}$. Then 
\[\left\|\sum_i a_i\otimes b_i -\sum_i a_i^{(m)}\otimes b_i\right\|_\mathcal{H}^2\leq c\sum_i\int_X b_i^2d\Gamma(a_i-a_i^{(m)})\]
with a constant $c>0$ that depends only on $\sum_i a_i\otimes b_i$. The right hand side is bounded by $2c(\max_i\sup_X b_i^2) \sum_i\mathcal{E}(a_i-a_i^{(m)})$ and therefore converges to zero as $m$ goes to infinity. 

We will now use a duality argument to prove that $\mathcal{S}_0$ is also dense in $L_p(X,m,(\mathcal{H}_x)_x)$: Assume it were not, then by Hahn-Banach we could find some $\eta\in L_q(X,m,(\mathcal{H}_x)_x)$, $1/p+1/q=1$, such that $\left\|\eta\right\|_{L_q(X,m,(\mathcal{H}_x)_x)}=1$ and 
\begin{equation}\label{E:kill}
\left\langle \omega,\eta\right\rangle=0 \ \ \text{ for all $\omega\in \mathcal{S}_0$.}
\end{equation}
Using $\eta$ we can construct an element $\omega$ of $\mathcal{S}_0$ for which (\ref{E:kill}) fails to hold. First of all, we deal with integrability issues by cutting off and approximating. For any $N\in\mathbb{N}$ let $K_N\subset X$ be compact such that $m(X\setminus K_N)<1/N$ and set
\[S_N:=\left\lbrace x\in X: \left\|\eta_x\right\|_{\mathcal{H}_x}<N\right\rbrace\cap K_N.\]
Then 
\[\lim_N\left\|\eta\mathbf{1}_{S_N}-\eta\right\|_{L_q(X,m, (\mathcal{H}_x)_x)}^q=\lim_N\int_X|\mathbf{1}_{S_N}(x)-\mathbf{1}(x)|^q\left\|\eta_x\right\|_{\mathcal{H}_x}^qm(dx)=0\]
by dominated convergence and accordingly for fixed $\varepsilon>0$ there exists some $N_\varepsilon>0$ such that for any $N\geq N_\varepsilon$, $\left\|\eta\mathbf{1}_{S_N}-\eta\right\|_{L_q(X,m,(\mathcal{H}_x)_x)}<\varepsilon$. Note also that
$\eta\mathbf{1}_{S_N}\in\mathcal{H}$ for all $N\in\mathbb{N}$ since
\[\int_X\left\|\eta_x\mathbf{1}_{S_N}(x)\right\|_{\mathcal{H}_x}^2m(dx)<N^2m(K_N)<\infty.\]
As $\left\|\eta\mathbf{1}_{S_N}\right\|_{L_q(X,m,(\mathcal{H}_x)_x)}>1-\varepsilon$, the function $x\mapsto \mathbf{1}_{S_N}(x)\left\|\eta_x\right\|_{\mathcal{H}_x}$ cannot be zero $m$-a.e. hence also
\[\delta_N:=\left\|\eta\mathbf{1}_{S_N}\right\|_\mathcal{H}>0.\]
Now let $(\omega_n)_n\subset \mathcal{S}_0$, $\omega_n=\sum_i a_i^{(n)}\otimes b_i^{(n)}$ be a sequence that approximates $\eta\mathbf{1}_{S_N}$ in $\mathcal{H}$. Let $0<\gamma<\delta_N$ and $n\in\mathbb{N}$ be so large that  
\[\left\|\eta\mathbf{1}_{S_N}-\omega_n\right\|_{\mathcal{H}}\leq \gamma.\]
Since $|\left\langle \mathbf{1}_{S_N}\eta,\mathbf{1}_{S_N}\eta-\omega_n\right\rangle|\leq \gamma\delta_N$ we obtain
\begin{equation}\label{E:contra}
|\left\langle \mathbf{1}_{S_N}\omega_n,\eta \right\rangle|=|\left\langle \omega,\mathbf{1}_{S_N}\eta \right\rangle|>\delta_N(\delta_N-\gamma)>0.
\end{equation}
On the other hand
\[\omega:=\mathbf{1}_{S_N}\omega_n=\sum_i \mathbf{1}_{S_N}(a_i^{(n)}\otimes b_i^{(n)})=\sum_i a_i^{(n)}\otimes (\mathbf{1}_{S_N}b_i^{(n)})\]
itself is an element of $\mathcal{S}_0$. Therefore (\ref{E:contra}) contradicts (\ref{E:kill}), and $\mathcal{S}_0$ must be dense in $L_p(X,m,(\mathcal{H}_x)_x)$.

Finally, note that the space
$\mathcal{S}$ is $L_p(X,m,(\mathcal{H}_x)_x)$-dense in $\mathcal{S}_0$: Let $\sum_i a_i\otimes b_i\in \mathcal{F}C_b^1(X,\left\lbrace \varphi_i\right\rbrace)\otimes \mathcal{B}_b(X)$. Any $b_i$ can be approximated uniformly by a sequence from $C_0(X)$, and by the regularity of $(\mathcal{E},\mathcal{F})$ together with Lemma \ref{L:boundedpointwise}, any $b_i$ can be approximated pointwise by a uniformly
bounded sequence $(b_i^{(m)})_m$ of functions from $\mathcal{F}C_b^1(X,\left\lbrace \varphi_i\right\rbrace)$. By (\ref{E:simplep}),
\[\left\|\sum_i a_i\otimes b_i-\sum_i a_i\otimes b_i^{(m)}\right\|_{L_p(X,m,(\mathcal{H}_x)_x)}\leq c\sum_i \left(\int_X|b_i(x)-b_i^{(m)}(x)|^p\Gamma_x(a_i)^{p/2}m(dx)\right)^{1/p}\]
(with $c>0$ depending on $\sum_i a_i\otimes b_i$), which converges to zero since $m$ is finite and $\Gamma_\cdot(a_i)\in L_\infty(X,m)$ for any $i$.
\end{proof}

In the following sense the existence of a countable set of continuous coordinates is always guaranteed. Recall that $\widetilde{m}$ denotes the measure (\ref{E:mtilde}) constructed in Lemma \ref{L:lem} as a sum of the energy measures of certain functions $f_n\in\mathcal{C}$, $n\in\mathbb{N}$, considered in (\ref{E:fn}).

\begin{theorem}\label{T:coordexist}
Let $(\mathcal{E},\mathcal{F})$ be a symmetric local regular Dirichlet form and $\widetilde{m}$ the measure given by (\ref{E:mtilde}). Then the set $\left\lbrace f_n\right\rbrace_{n\in\mathbb{N}}$ of functions $f_n$ according to (\ref{E:fn}) is a set of continuous coordinates for $\mathcal{E}$ with respect to $\widetilde{m}$.
\end{theorem}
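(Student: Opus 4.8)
The plan is to verify the two defining conditions (COI) and (COII) of a set of continuous coordinates directly from the explicit construction of $\widetilde{m}$ in (\ref{E:mtilde}) and of the functions $f_n$ in (\ref{E:fn}). The structural feature I would exploit throughout is that, by its very definition as the weighted sum $\widetilde{m}=\sum_n 2^{-n}\Gamma(f_n)$ with all summands nonnegative, the measure $\widetilde{m}$ dominates each individual energy measure: one has $2^{-i}\Gamma(f_i)\leq\widetilde{m}$ as measures, hence $\Gamma(f_i)\leq 2^i\widetilde{m}$, and so the Radon--Nikodym density satisfies $\Gamma_x(f_i)=\frac{d\Gamma(f_i)}{d\widetilde{m}}\leq 2^i$ for $\widetilde{m}$-a.e. $x$. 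This single observation is what makes the normalization and the dyadic weights in Lemma \ref{L:lem} pay off.

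For (COI), I would first record from the domination above that $\Gamma(f_i)\in L_\infty(X,\widetilde{m})$ with the explicit bound $\|\Gamma(f_i)\|_{L_\infty}\leq 2^i$. For the off-diagonal terms I would invoke the pointwise Cauchy--Schwarz inequality for the fiberwise forms $\Gamma_x(\cdot,\cdot)$ -- equivalently, the symmetry and nonnegative-definiteness of the matrices $(\Gamma_x(f_i,f_j))$ arranged in Section \ref{S:Energy}, whose $2\times 2$ principal minors give $|\Gamma_x(f_i,f_j)|\leq \Gamma_x(f_i)^{1/2}\Gamma_x(f_j)^{1/2}\leq 2^{(i+j)/2}$ for $\widetilde{m}$-a.e. $x$. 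Thus $\Gamma(f_i,f_j)\in L_\infty(X,\widetilde{m})$. Since $\widetilde{m}$ is finite by Lemma \ref{L:lem}, boundedness immediately yields membership in $L_1(X,\widetilde{m})$ as well, so (COI) holds.

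For (COII), I would use that the underlying family is $\mathcal{E}_1$-dense by construction: the functions $\psi_n$, and hence the normalized $f_n=\psi_n/\Gamma(\psi_n)(X)^{1/2}$, were chosen in Lemma \ref{L:lem} precisely so as to be dense in $\mathcal{F}$, and therefore in $\mathcal{C}$, with respect to $\mathcal{E}_1$. It then suffices to check that each scalar multiple $\psi_n=\Gamma(\psi_n)(X)^{1/2}f_n$ already belongs to the cylinder class $\mathcal{F}C_b^1(X,\{f_n\})$. This is achieved by truncation, exactly as for the function $\phi$ in the proof of Lemma \ref{L:boundedpointwise}: since $f_n\in\mathcal{C}\subset C_0(X)$ is bounded by some $s_n$, I choose $F\in C_b^1(\mathbb{R})$ with $F(0)=0$ that coincides with the linear map $y\mapsto\Gamma(\psi_n)(X)^{1/2}\,y$ on $[-s_n,s_n]$ and is made bounded (constant) outside; then $F(f_n)=\psi_n$ pointwise, exhibiting $\psi_n$ as a one-variable cylinder function. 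As the $\psi_n$ are $\mathcal{E}_1$-dense in $\mathcal{C}$, so is $\mathcal{F}C_b^1(X,\{f_n\})$, which is (COII).

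Overall this is largely bookkeeping once the domination $2^{-i}\Gamma(f_i)\leq\widetilde{m}$ is noticed, and the truncation trick disposes of the $F(0)=0$ and boundedness constraints in the definition of cylinder functions. I expect the only genuinely delicate point to be the uniform $L_\infty$-bound on the cross densities $\Gamma(f_i,f_j)$: this is exactly where the pointwise (fiberwise) interpretation of the energy measures developed in Section \ref{S:Energy} is required, as the global measures alone do not give a pointwise Cauchy--Schwarz estimate on the densities.
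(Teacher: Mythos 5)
Your proposal is correct and follows essentially the same route as the paper: (COI) via the domination $2^{-n}\Gamma(f_n)\leq\widetilde{m}$ giving $\frac{d\Gamma(f_n)}{d\widetilde{m}}\leq 2^n$ $\widetilde{m}$-a.e., then Cauchy--Schwarz for the cross densities and finiteness of $\widetilde{m}$ for the $L_1$ membership, and (COII) from the $\mathcal{E}_1$-denseness of $\lin(\{f_n\}_n)$ in $\mathcal{C}$ built into Lemma \ref{L:lem}. Your truncation argument showing that each $\psi_n$ genuinely lies in $\mathcal{F}C_b^1(X,\{f_n\})$ (since a linear $F$ is not in $C_b^1$) is a worthwhile detail that the paper's terse ``what implies (COII)'' leaves implicit.
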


\begin{proof}
Obviously
\[\frac{d\Gamma(f_n)}{d\widetilde{m}}\leq 2^n\ \ \text{$\widetilde{m}$-a.e}\]
for any $n$, and by Cauchy-Schwarz (COI) follows. By construction $\lin\:\left(\left\lbrace f_n\right\rbrace_n\right)$ is $\mathcal{E}_1$-dense
in $\mathcal{C}$, what implies (COII). Finally, recall that the finite measure $\widetilde{m}$ does not charge sets of zero capacity, \cite[Lemma 3.2.4]{FOT94}.
\end{proof}

We give an non-classical example.

\begin{examples} Let $(\overline{\mathcal{E}},\overline{\mathcal{F}})$ be a self-similar resistance form, \cite{Ki01,Ki03, Ki12}, on a self-similar finitely ramified fractal $X$, see \cite[Definitions 7.1 and 7.4]{T08}. We assume that $(\overline{\mathcal{E}},\overline{\mathcal{F}})$ is regular in the sense of \cite[Section 7]{T08}. (This notion of regularity is different from the one addressed in \cite{Ki12} and in Theorem \ref{T:closableresistance}). Let $\varphi_1,...,\varphi_k$ be a complete, up to constants, energy orthonormal set of harmonic functions and define a finite reference measure by
\begin{equation}\label{E:Kusuokameasure}
m:=\sum_{j=1}^k \Gamma(\varphi_j)
\end{equation}
(\emph{Kusuoka measure}), where $\Gamma(\varphi_j)$ are the energy measures of the functions $\varphi_j$. Consider the map $\psi:X\to\mathbb{R}^m$, $\psi(x)=(\varphi_1(x),...,\varphi_k(x))$, cf. \cite{T08}. We assume that $\psi: X\to \psi(X)$ is a homeomorphism.
This implies that all $\varphi_j$ are continuous on $X$, cf. \cite[Proposition 5.3]{T08}. By \cite[Theorems 3 and 10]{T08}  $(\overline{\mathcal{E}},\overline{\mathcal{F}})$ induces a local regular Dirichlet form $(\mathcal{E},\mathcal{F}^{(m)})$ on $L_2(X,m)$. Using Theorem \ref{T:coordexist} we may now conclude that $\left\lbrace \varphi_i\right\rbrace_{i}$ is a set of continuous coordinates for $\mathcal{E}$ with respect to $m$.
\end{examples}

\begin{remark}\label{R:example} An alternative argument to prove at least (COI) may be obtained directly using the associated generator.
Here we assume that $(\mathcal{E},\mathcal{C})$ is closable on $L_2(X,m)$. Let $L^{(m)}$ be the infinitesimal generator of the Dirichlet form $(\mathcal{E},\mathcal{F}^{(m)})$ in $L_2(X,m)$ and $dom\:L^{(m)}$ its domain. By $(L^{(m),1}, dom\:L^{(m),1})$  we denote the closure in $L_1(X,m)$ of $(L^{(m)}, dom\:L^{(m)})$, see \cite[Section I.2.4]{BH91} and recall that we have assumed $m$ to be finite.
As all energy measures $\Gamma(f)$, $f\in\mathcal{C}$, are absolutely continuous with respect to $m$, \cite[Theorems I.4.2.1 and I.4.2.2]{BH91} tell that $dom\:L^{(m),1}\cap L_\infty(X,m)$ is an algebra and 
\[\Gamma(\varphi,\psi)=L^{(m),1}(\varphi\psi)-\varphi L^{(m)}\psi-\psi L^{(m)}\varphi\] 
for all $\varphi,\psi\in dom\:L^{(m)}$. Therefore, if $\left\lbrace \varphi_i\right\rbrace_{i\in I}\subset dom\:L^{(m)}$ and for all $i,j\in I$, the functions $L^{(m)}\varphi_i$ and $L^{(m),1}(\varphi_i\varphi_j)$ are members of $L_\infty(X,m)$, condition (COI) is obviously satisfied.
\end{remark}

\section{Applications to SPDE}\label{S:PDE}

The results of Section \ref{S:Sobo} may for instance be used to study deterministic or stochastic evolution equations in the variational framework. To discuss a class of examples we assume throughout the entire section that $2\leq p<\infty$, $m$ is finite, (\ref{E:desirable}) holds and $(\partial_p,\mathcal{C}_p)$ is closable on $L_p(X,m)$.\\

\emph{SPDE in variational form}. SPDE in variational form have been studied first in \cite{KR} and \cite{Par}, a brief exposition is given in \cite{PrevRoeck}. For simplicity consider It\^o SPDE with additive Brownian noise of type
\begin{equation}\label{E:SPDE}
du(t)=\partial^\ast a(\partial u(t)) dt +\sqrt{Q} dW(t)
\end{equation}
on $(0,T)\times X$ with some initial condition $u(0)=u_0$. Here $(W(t))_{t\geq 0}$ is a cylindrical Wiener process on $L_2(X,m)$ of the form
\[W(t)=\sum_{k=1}^\infty \beta_k(t)e_k\]
where $(\beta_k)_k$ is a sequence of mutually independent one-dimensional standard Brownian motions on a filtered complete probability space $(\Omega,\mathcal{F},(\mathcal{F}_t)_{t\geq 0},\mathbb{P})$, and $\left\lbrace e_k\right\rbrace_k$ is an orthonormal basis in $L_2(X,m)$. Then $L_p(X,m)\subset L_2(X,m)$, and we have 
\[H^{1,p}_0(X,m)\subset L_2(X,m)\subset (H_0^{1,p}(X,m))^\ast\ ,\]
where as before $(H_0^{1,p}(X,m))^\ast$ denotes the dual space of $H^{1,p}_0(X,m)$, and the embeddings are continuous. We write \[\left\langle u,v\right\rangle:=v(u)\ \ \ \text{ for $u\in (H_0^{1,p}(X,m))^\ast$ and $v\in H^{1,p}_0(X,m)$}.  \]
Generalizing the growth condition (\ref{E:growth}) we may require $a$ to be a bounded operator 
\[a:L_p(X,m,(\mathcal{H}_x)_x)\to L_q(X,m,(\mathcal{H}_x)_x)\]
with $1/p+1/q=1$ and such that 
\begin{equation}\label{E:growthp}
\left\|a(v)\right\|_{L_q(X,m,(\mathcal{H}_x)_x)}\leq c_0(1+\left\|v\right\|_{L_p(X,m,(\mathcal{H}_x)_x)}^{p-1})
\end{equation}
for all $v\in L_p(X,m,(\mathcal{H}_x)_x)$.
\begin{remark}
(\ref{E:growthp}) is obviously valid with $p=2$ if $a=(a_x)_x$ with bounded operators $a_x:\mathcal{H}_x\to \mathcal{H}_x$ such that m-$\esssup_{x\in X}\left\|a_x\right\|_{\mathcal{H}_x\to\mathcal{H}_x}<\infty$.
\end{remark}
The following is a simple consequence of the H\"older inequality (\ref{E:Hoelder}):

\begin{lemma}
If $a$ satisfies (\ref{E:growthp}) then $\partial^\ast a(\partial\cdot)$ defines a bounded operator from
$H^{1,p}_0(X,m)$ into $(H^{1,p}_0(X,m))^\ast$ with 
\[\left\|\partial^\ast a(\partial u)\right\|_{(H^{1,p}_0(X,m))^\ast}\leq c_6(1+\left\|u\right\|_{H^{1,p}_0(X,m)}^{p-1})\ ,\]
$u\in H^{1,p}_0(X,m)$, with a constant $c_6>0$.
\end{lemma}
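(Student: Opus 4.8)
The plan is to show that $\partial^\ast a(\partial \cdot)$ maps $H_0^{1,p}(X,m)$ boundedly into its dual by directly estimating the pairing $\langle \partial^\ast a(\partial u), v\rangle$ against an arbitrary test function $v\in H_0^{1,p}(X,m)$, and then taking the supremum over the unit ball. First I would fix $u,v\in H_0^{1,p}(X,m)$ and use the integration-by-parts definition of the divergence operator (the analog of \eqref{E:ibp2} in the $L_p$-setting) to write
\begin{equation}\label{E:proofstep}
\langle \partial^\ast a(\partial u), v\rangle = -\langle a(\partial u), \partial v\rangle,
\end{equation}
where the right-hand pairing is the $L_q$--$L_p$ duality between $L_q(X,m,(\mathcal{H}_x)_x)$ and $L_p(X,m,(\mathcal{H}_x)_x)$.

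Next I would apply the H\"older inequality \eqref{E:Hoelder} to the right-hand side of \eqref{E:proofstep}, obtaining
\[
|\langle a(\partial u), \partial v\rangle| \leq \left\|a(\partial u)\right\|_{L_q(X,m,(\mathcal{H}_x)_x)}\left\|\partial v\right\|_{L_p(X,m,(\mathcal{H}_x)_x)}.
\]
The first factor is controlled via the growth assumption \eqref{E:growthp}, which gives $\left\|a(\partial u)\right\|_{L_q}\leq c_0(1+\left\|\partial u\right\|_{L_p}^{p-1})$. Since $\left\|\partial u\right\|_{L_p(X,m,(\mathcal{H}_x)_x)}\leq \left\|u\right\|_{H_0^{1,p}(X,m)}$ directly from the definition of the Sobolev norm $\left\|\cdot\right\|_{1,p}$, and similarly $\left\|\partial v\right\|_{L_p}\leq \left\|v\right\|_{H_0^{1,p}(X,m)}$, combining these estimates yields
\[
|\langle \partial^\ast a(\partial u), v\rangle| \leq c_0\big(1+\left\|u\right\|_{H_0^{1,p}(X,m)}^{p-1}\big)\left\|v\right\|_{H_0^{1,p}(X,m)}.
\]

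Finally I would divide by $\left\|v\right\|_{H_0^{1,p}(X,m)}$ and take the supremum over nonzero $v$ to conclude the asserted bound with $c_6:=c_0$, which also confirms that $\partial^\ast a(\partial u)$ is a well-defined element of the dual space $(H_0^{1,p}(X,m))^\ast$ for each $u$. Since the passage from $u$ through $a(\partial u)$ to the dual element is plainly linear in none of the intermediate steps (as $a$ is nonlinear) but the resulting operator-norm estimate is what the statement claims, no linearity of the map $u\mapsto \partial^\ast a(\partial u)$ is needed—only boundedness in the stated quantitative sense. There is no genuine obstacle here: the only point demanding mild care is ensuring the integration-by-parts identity \eqref{E:proofstep} is legitimate on all of $H_0^{1,p}(X,m)$ rather than merely on the core $\mathcal{C}_p$, which follows by density of $\mathcal{C}_p$ in $H_0^{1,p}(X,m)$ together with the continuity of both pairings and the growth bound on $a$.
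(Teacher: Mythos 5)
Your proof is correct and follows exactly the route the paper intends: the paper offers no separate argument, stating only that the lemma is a simple consequence of the H\"older inequality (\ref{E:Hoelder}), and your chain — the dual-pairing definition of $\partial^\ast a(\partial u)$ as $v\mapsto -\left\langle a(\partial u),\partial v\right\rangle$, then H\"older, then the growth bound (\ref{E:growthp}), then $\left\|\partial u\right\|_{L_p(X,m,(\mathcal{H}_x)_x)}\leq \left\|u\right\|_{1,p}$ — is precisely that computation spelled out, yielding $c_6=c_0$.
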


Similarly as in the case of (\ref{E:quasilinear}) we may invoke a general solution theory \cite{KR,Par}, provided some regularity conditions are satisfied. In addition to (\ref{E:growthp}) we will require the versions
\begin{equation}\label{E:coercivep}
\left\langle a(\partial f),\partial f\right\rangle\geq c_1\left\|f\right\|_{1,p}^p-c_2 \left\|f\right\|_{L_2(X,m)}^2\ \ \text{for all $f\in H^{1,p}_0(X,m)$}
\end{equation}
with constants $c_1>0$, $c_2\geq 0$ and
\begin{equation}\label{E:monotonep}
\left\langle a(\partial f)-a(\partial g),\partial f-\partial g\right\rangle\geq c_3\left\|f-g\right\|_{L_2(X,m)}^2 \ \ \text{for all $f,g\in H^{1,p}_0(X,m)$},
\end{equation}
with $c_3>0$ of (\ref{E:coercive}) and (\ref{E:strictlymon}) with the left hand sides interpreted in the sense of duality. Finally, assume that for all $u,v,w$ from the image $Im\:\partial_p$ of $H_0^{1,p}(X,m)$ under $\partial_p$,
\begin{equation}\label{E:hemicont}
\text{ the function }\ \ \lambda\mapsto \left\langle a(u+\lambda v),w \right\rangle \ \ \text{ is continuous at zero.}
\end{equation}
\begin{remark}\mbox{}
Note that if (\ref{E:growthp}) is valid and $a=(a_x)_x$ is decomposable as before, the relation 
\[\lim_{\lambda\to 0} \left\langle a_x(u(x)+\lambda v(x)),z(x)\right\rangle_{\mathcal{H}_x}=\left\langle a_x(v(x)),z(x)\right\rangle_{\mathcal{H}_x}\]
for $m$-a.e. $x\in X$ implies (\ref{E:hemicont}), because
\[|\left\langle a(\partial f+\lambda \partial g), \partial h\right\rangle|\leq c(1+\left\|f\right\|_{1,p}+\left\|g\right\|_{1,p})\left\|h\right\|_{1,p}, \ \ f,g,h\in H^{1,p}_0(X,m),\]
as one can easily verify.
\end{remark}

A continuous $(\mathcal{F}_t)$-adapted process $u=(u(t))_{t\in [0,T]}$ is called \emph{a solution to (\ref{E:SPDE}) with initial condition $u_0$} if 
\[\mathbb{E}\int_0^T(\left\|u(t)\right\|_{1,p}^p+\left\|u(t)\right\|_{L_2(X,m)}^2)dt<\infty\]
and 
\[u(t)=u_0+\int_0^t \partial^\ast a(\partial \widetilde{u}(s))ds+\int_0^t\sqrt{Q}dW(s)\ ,\ \ t\in [0,T],\]
seen as an identity of $(H^{1,p}_0(X,m))^\ast$-valued functions, where $\widetilde{u}$ is any $H^{1,p}_0(X,m)$-valued progressively measurable $dt\otimes d\mathbb{P}$-version of $u$. 

The following is a special case of the classical results in \cite{KR,PrevRoeck}.
\begin{theorem}
Let $2\leq p<\infty$, $m(X)<\infty$ and assume that $a$ satisfies (\ref{E:growthp}), (\ref{E:coercivep}), (\ref{E:monotonep}) and (\ref{E:hemicont}). Let 
\[\mathbb{E}\int_X u_0^2(x) m(dx)<\infty.\]
Then (\ref{E:SPDE}) has a unique solution $u$ with initial condition $u_0$.
\end{theorem}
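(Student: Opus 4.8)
The plan is to read (\ref{E:SPDE}) as an instance of the abstract variational stochastic evolution equation $du = A(u)\,dt + \sqrt{Q}\,dW$ over the Gelfand triple
\[
V := H^{1,p}_0(X,m)\subset H:=L_2(X,m)\subset V^\ast=(H^{1,p}_0(X,m))^\ast,
\]
with drift operator $A(u):=\partial^\ast a(\partial u)$, and then to invoke the existence and uniqueness theory of Krylov--Rozovskii and Pr\'ev\^ot--R\"ockner \cite{KR,PrevRoeck}. First I would confirm that this is an admissible triple. Since $2\le p<\infty$, the operator $\partial_p$ is closed, so $V$ embeds isometrically via $u\mapsto (u,\partial_p u)$ into the reflexive separable space $L_p(X,m)\times L_p(X,m,(\mathcal{H}_x)_x)$ as a closed subspace, and is therefore itself reflexive and separable. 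Because $m$ is finite and $p\ge 2$ one has $L_p(X,m)\subset L_2(X,m)$ continuously, and the $\left\|\cdot\right\|_{1,p}$-norm dominates the $L_p$-norm, so $V\subset H$ is continuous; it is dense because $\mathcal{C}_p\subset V$ is dense in $L_p(X,m)$ by (COREI) and $L_p(X,m)$ is dense in $L_2(X,m)$. Identifying $H$ with its dual yields the displayed continuous dense embeddings.

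The core of the work is to translate the hypotheses on $a$ and $\partial$ into the four standard conditions on $A$, the key device being the defining relation $\langle \partial^\ast v,u\rangle = -\langle v,\partial u\rangle$ of the $V^\ast$-valued divergence, cf. (\ref{E:ibp2}) and its $L_p$-analog preceding the Examples. Boundedness/growth is exactly the Lemma preceding the theorem, namely $\left\|A(u)\right\|_{V^\ast}\le c_6(1+\left\|u\right\|_V^{p-1})$, itself a consequence of the growth bound (\ref{E:growthp}) and H\"older (\ref{E:Hoelder}). For weak monotonicity I would compute, for $f,g\in V$,
\[
\langle A(f)-A(g),f-g\rangle = -\langle a(\partial f)-a(\partial g),\partial f-\partial g\rangle\le -c_3\left\|f-g\right\|_{L_2(X,m)}^2\le 0,
\]
which is precisely (\ref{E:monotonep}) and in particular gives $2\langle A(f)-A(g),f-g\rangle\le C\left\|f-g\right\|_H^2$. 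Coercivity follows from (\ref{E:coercivep}) in the same way,
\[
2\langle A(f),f\rangle = -2\langle a(\partial f),\partial f\rangle \le -2c_1\left\|f\right\|_{1,p}^p+2c_2\left\|f\right\|_{L_2(X,m)}^2,
\]
so $A$ is coercive of order $p$ in $V$ up to the admissible $H$-correction. Finally, hemicontinuity, i.e. continuity of $\lambda\mapsto\langle A(u+\lambda v),w\rangle$, is exactly (\ref{E:hemicont}); combined with monotonicity and boundedness it supplies the demicontinuity and measurability of $A$ required by the abstract theorem.

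It remains to address the noise. With purely additive noise the diffusion coefficient is the constant map $\sqrt{Q}$, and the only point to verify is that it is an admissible Hilbert--Schmidt coefficient from the noise space into $H=L_2(X,m)$, which holds as soon as $Q$ is trace class on $L_2(X,m)$; the resulting constant $\left\|\sqrt{Q}\right\|_{HS}^2$ is then absorbed harmlessly by the coercivity inequality above. Having checked the Gelfand-triple structure, the four conditions (H1)--(H4) in the form above, and the admissibility of $\sqrt{Q}$, the existence and uniqueness of a continuous $(\mathcal{F}_t)$-adapted solution with the stated integrability and the given square-integrable initial datum $u_0$ follow directly from \cite{KR,PrevRoeck}. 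I expect the only genuinely delicate step to be the bookkeeping in the integration-by-parts reduction: because $\partial^\ast$ carries a sign, the orientation of the monotonicity and coercivity inequalities must be tracked carefully so that (\ref{E:monotonep}) and (\ref{E:coercivep}) produce the correct one-sided bounds on $A$. Verifying reflexivity of $V$, the continuous dense embedding $V\subset H$, and the admissibility of the noise is by comparison routine.
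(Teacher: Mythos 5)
Your proposal is correct and follows exactly the paper's route: the paper obtains the theorem as a special case of the classical variational results of Krylov--Rozovskii and Pr\'ev\^ot--R\"ockner \cite{KR,PrevRoeck} over the Gelfand triple $H^{1,p}_0(X,m)\subset L_2(X,m)\subset (H^{1,p}_0(X,m))^\ast$, with the hypotheses (\ref{E:growthp}), (\ref{E:coercivep}), (\ref{E:monotonep}), (\ref{E:hemicont}) and the preceding boundedness lemma designed precisely to yield the standard growth, coercivity, monotonicity and hemicontinuity conditions via the duality $\left\langle \partial^\ast v,u\right\rangle=-\left\langle v,\partial u\right\rangle$. Your write-up simply makes explicit the verification (reflexivity and separability of $V$, the sign bookkeeping, trace-class $Q$ for the additive noise) that the paper leaves to the cited references.
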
 

\begin{examples} 
A specific example is given by the following \emph{stochastic $p$-Laplace equation}: Let $a=(a_x)_{x\in X}$ with
\[a_x(v(x)):=\left\|v(x)\right\|_{\mathcal{H}_x}^{p-2}v(x), \ \ v\in Im\:\partial_p.\]
We have $\left\|a(v)\right\|_{L_q(X,m,(\mathcal{H}_x)_x)}=\left\|v\right\|_{L_p(X,m,(\mathcal{H}_x)_x)}^{p-1}$ by H\"older's inequality, hence (\ref{E:growthp}) holds. Condition (\ref{E:hemicont}) rewrites
\begin{multline}
\lim_{\lambda\to 0}\int_X\left(\left\|\partial (u+\lambda v)(x)\right\|_{\mathcal{H}_x}^{p-2}\left\langle\partial(u+\lambda v)(x),\partial w(x)\right\rangle_{\mathcal{H}_x}\right.\notag\\
-\left. \left\|\partial u(x)\right\|_{\mathcal{H}_x}^{p-2}\left\langle\partial u(x),\partial w(x)\right\rangle_{\mathcal{H}_x}\right)m(dx)=0.
\end{multline}
But this follows by dominated convergence, the pointwise limit being obvious from the continuity of $\left\|\cdot\right\|_{\mathcal{H}_x}$ for fixed $x$ and a dominating integrable function being provided by
\[c\left(\left\| \partial v(x)\right\|_{\mathcal{H}_x}^{p-1}+\left\|\partial v(x)\right\|_{\mathcal{H}_x}^{p-1}\right)\left\|\partial w(x)\right\|_{\mathcal{H}_x}^{p-1}\ .\]
(\ref{E:monotonep}) holds with $c_4=0$ because 
\begin{multline}
\int_X\left(\left\|\partial f(x)\right\|_{\mathcal{H}_x}^p+\left\|\partial g(x)\right\|_{\mathcal{H}_x}^p-\left\|\partial f(x)\right\|_{\mathcal{H}_x}^{p-2}\left\langle \partial f(x),\partial g(x)\right\rangle_{\mathcal{H}_x}\right.\notag\\
\left. -\left\|\partial g(x)\right\|_{\mathcal{H}_x}^{p-2}\left\langle \partial f(x),\partial g(x)\right\rangle_{\mathcal{H}_x}\right)m(dx)\notag\\
\geq \int_X\left(\left\|\partial f(x)\right\|_{\mathcal{H}_x}^ {p-1}-\left\|\partial g(x)\right\|_{\mathcal{H}_x}^ {p-1}\right)\left(\left\|\partial f(x)\right\|_{\mathcal{H}_x}-\left\|\partial g(x)\right\|_{\mathcal{H}_x}\right)
\geq 0.
\end{multline}
Condition (\ref{E:coercivep}) follows immediately if a \emph{$p$-Poincar\'e inequality} is satisfied,
\begin{equation}\label{E:pPoincare}
\left\|f\right\|_{L_p(X,m)}^p\leq c_P\int_X\left\|\partial f(x)\right\|_{\mathcal{H}_x}^ pm(dx)
\end{equation}
with some $c_P>0$ for all $f\in L_p(X,m)$ with $\int_X fdm=0$. For smooth bounded Euclidean domains (\ref{E:pPoincare}) follows by classical arguments. It also holds if $(\mathcal{E},\mathcal{F})$ is induced by a regular resistance form \cite{Ki01, Ki03, Ki12} and is subject to Dirichlet boundary conditions. 
\end{examples}

\section{Stochastic calculus}\label{S:stochcalc}

In this final section we comment on the natural connection between the approach to $1$-forms by Cipriani and Sauvageot \cite{CS03, CS09} and the theory of stochastic integrals for continuous additive functionals as introduced by Nakao \cite{N85} and further investigated in \cite{ChFKuZh06, ChFKuZh08, FK04, LLZh98, LZh94}. Although this connection is known to experts (see for instance \cite[p. 506]{LZh94} or \cite[Example 5.6.1]{FOT94}), we would like to bring it to the attention of a broader audience. The setup studied in Sections \ref{S:Energy} and \ref{S:gradient} can be translated into probability and in particular, the notions of gradient and divergence have probabilistic counterparts. We finally point out that under mild conditions known perturbation results for symmetric Markov processes go well together with our fiberwise perspective on $\mathcal{H}$ and  lead to some analogs of classical non-divergence form operators.

We assume that $\mu$ is an admissible reference measure and $(\mathcal{E},\mathcal{F})$ is a symmetric regular Dirichlet form on $L_2(X,\mu)$. Let $Y=(Y_t)_{t\geq 0}$ denote the \emph{$\mu$-symmetric Hunt process} on $X$ uniquely associated with $(\mathcal{E},\mathcal{F})$ satisfying (\ref{E:desirable}). For background, notation and some important subtle details we refer to \cite{FOT94}. We consider \emph{additive functionals (AF's)} of $Y$, see for instance \cite[Section 5]{FOT94}. Given an AF $A=(A_t)_{t\geq 0}$ of $Y$, its \emph{energy} is defined by
\[\mathbf{e}(A):=\lim_{t\to 0}\frac{1}{2t}\mathbb{E}_\mu(A_t^2).\]
Let
\begin{multline}
\mathring{\mathcal{M}}:=\left\lbrace M: \text{$M$ is a finite cadlag AF of $Y$ with $\mathbf{e}(M)<\infty$ such that}\right.\notag\\
\left. \text{ for any $t>0$ we have $\mathbb{E}_x(M_t^2)<\infty$ and $\mathbb{E}_x(M_t)=0$ for q.e. $x\in X$}\right\rbrace,
\end{multline}
usually referred to as the \emph{space of martingale additive functionals of finite energy}. By polarization the energy $\mathbf{e}$ provides a bilinear form on $\mathring{\mathcal{M}}$ such that $(\mathring{\mathcal{M}},\mathbf{e})$ is Hilbert. Given $M\in\mathring{\mathcal{M}}$ let $\mu_{<M>}$ denote its \emph{energy measure} (the \emph{Revuz measure} of its \emph{sharp bracket} $<M>$) and for $M,N\in\mathring{\mathcal{M}}$ write $\mu_{<M,N>}$ for its bilinear version.
For any $M\in\mathring{\mathcal{M}}$ and $f\in L_2(X,\mu_{<M>})$ the \emph{stochastic integral} $f\bullet M \in\mathring{\mathcal{M}}$ is defined by the identity
\begin{equation}\label{E:stochint}
\mathbf{e}(f\bullet M,N)=\frac{1}{2}\int_Xf(x)\mu_{<M,N>}(dx), \ \ N\in \mathring{\mathcal{M}},
\end{equation}
The map $f\mapsto f\bullet M$ from $L_2(X,\mu_{<M>})$ into $\mathring{\mathcal{M}}$ is linear and continuous. See \cite[Theorem 5.6.1]{FOT94}. Note that by the nesting property of smooth measures, \cite[Section 2.2]{FOT94}, any $f\in C_0(X)$ is in $L_2(X,\mu_{<M>})$ for any $M\in\mathring{\mathcal{M}}$. To an energy finite function $u\in\mathcal{F}$ we can associate an additive functional $A^{[u]}$ by
\[A_t^{[u]}:=\widetilde{u}(Y_t)-\widetilde{u}(Y_0), \ \ t>0,\]
where as before $\widetilde{u}$ denotes the quasi-continuous Borel version of $u$. According to Fukushima's theorem, \cite[Theorem 5.2.2]{FOT94}, the functional $A^{[u]}$ decomposes uniquely as
\[A^{[u]}=M^{[u]}+N^{[u]},\]
where $M^{[u]}\in \mathring{\mathcal{M}}$ and $N^{[u]}$ is a member of the space 
\begin{multline}
\mathcal{N}_c:=\left\lbrace N: \text{ $N$ is a finite continuous AF with $\mathbf{e}(N)=0$ and such that } \right.\notag\\
\left. \text{$\mathbb{E}_x(|N_t|)<\infty$ for q.e. $x\in X$ for each $t>0$.}\right\rbrace
\end{multline}
of \emph{continuous additive functionals of zero energy}. For $u,w\in\mathcal{F}$ we have 
\[\mu_{<M^{[u]}, M^{[w]}>}=\Gamma(u,w).\]
From (\ref{E:scalarprodH}) and (\ref{E:stochint}) it is easily seen that 
\begin{equation}\label{E:isotheta}
\Theta(f\partial u):=2f\bullet M^{[u]}
\end{equation}
defines a linear isometry from the subspace $\mathcal{C}\otimes\mathcal{C}$ of $\mathcal{H}$ into $\mathring{\mathcal{M}}$ satisfying
\[\mathbf{e}(\Theta(f\partial u))=\left\|f\partial u\right\|_\mathcal{H}^2.\]
(The factor $2$ in (\ref{E:isotheta}) could easily be avoided by modifying related definitions, but to keep things simple we stick to those used in the literature.)
Now recall that by Remark \ref{R:CC} the space $\mathcal{C}\otimes \mathcal{C}$ is dense in $\mathcal{H}$. Therefore $\Theta$ extends to a linear isometry from $\mathcal{H}$ onto its image. However, \cite[Lemma 5.6.3]{FOT94} implies that the family
\[\left\lbrace f\bullet M^{[u]}: f,u\in\mathcal{C}\right\rbrace\]
is dense in $\mathring{\mathcal{M}}$. By a totality argument the range of $\Theta$ must therefore be all of $\mathring{\mathcal{M}}$, and we have reproved the following theorem, which is due to Nakao, \cite[Theorem 5.1]{N85}.

\begin{theorem}
The spaces $\mathcal{H}$ and $\mathring{\mathcal{M}}$ are isometrically isomorphic under the map $\Theta$ determined by (\ref{E:isotheta}).
\end{theorem}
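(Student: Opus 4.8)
The plan is to construct $\Theta$ on a dense subspace of $\mathcal{H}$, check that it preserves the relevant quadratic forms there, extend it to all of $\mathcal{H}$ by continuity, and then identify its range with $\mathring{\mathcal{M}}$ by a totality argument. By Remark \ref{R:CC} (ii) the simple tensors $a\otimes b$, $a,b\in\mathcal{C}$, span a dense subspace of $\mathcal{H}$, and under the module structure $a\otimes b=(\partial a)b$, so that $\Theta$ sends $a\otimes b$ to $2b\bullet M^{[a]}$ in accordance with (\ref{E:isotheta}); thus it suffices to treat finite linear combinations of such tensors.

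First I would verify the isometry on these combinations. Polarizing the inner product (\ref{E:scalarprodH}) gives, for $a_i,b_i,c_j,d_j\in\mathcal{C}$,
\[
\Big\langle \sum_i a_i\otimes b_i,\ \sum_j c_j\otimes d_j\Big\rangle_{\mathcal{H}}=\sum_{i,j}\int_X b_i d_j\, d\Gamma(a_i,c_j),
\]
whereas the defining relation (\ref{E:stochint}) of the stochastic integral, combined with $\mu_{<M^{[a]},M^{[c]}>}=\Gamma(a,c)$ and the bilinearity of $\mathbf{e}$, produces the matching expression for $\mathbf{e}\big(\sum_i 2b_i\bullet M^{[a_i]},\ \sum_j 2 d_j\bullet M^{[c_j]}\big)$. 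A direct computation shows the two coincide, the constant $2$ in (\ref{E:isotheta}) being calibrated exactly so that the $\mathcal{H}$-inner product agrees with the energy form $\mathbf{e}$. In particular $\Theta$ is well defined (a combination of zero $\mathcal{H}$-norm has image of zero energy, hence vanishes in $\mathring{\mathcal{M}}$) and isometric on this dense subspace.

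Because the subspace is dense in $\mathcal{H}$ and $(\mathring{\mathcal{M}},\mathbf{e})$ is complete, $\Theta$ extends uniquely to a linear isometry $\Theta:\mathcal{H}\to\mathring{\mathcal{M}}$, whose image is therefore a complete, and hence closed, subspace of $\mathring{\mathcal{M}}$. For surjectivity I would invoke \cite[Lemma 5.6.3]{FOT94}, by which $\{f\bullet M^{[u]}:f,u\in\mathcal{C}\}$ is total in $\mathring{\mathcal{M}}$. Since the image of $\Theta$ contains every $2f\bullet M^{[u]}$, and thus a dense subset of $\mathring{\mathcal{M}}$, the closed subspace $\Theta(\mathcal{H})$ must be all of $\mathring{\mathcal{M}}$; this is the same kind of totality argument already used in the proof of Theorem \ref{T:coincide}. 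Alternatively one argues by orthogonality: if $N\in\mathring{\mathcal{M}}$ satisfies $\mathbf{e}(N,f\bullet M^{[u]})=0$ for all $f,u\in\mathcal{C}$, then (\ref{E:stochint}) forces $\mu_{<N,M^{[u]}>}=0$ for each $u$, and totality yields $N=0$.

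The step I expect to be most delicate is the isometry computation, not because it is hard but because it hinges on the normalization conventions relating the energy $\mathbf{e}$, the Revuz measure of the sharp bracket $<M^{[u]}>$, and the energy measure $\Gamma(u)$; these are precisely what fix the factor $2$ in (\ref{E:isotheta}). Everything else — the continuous extension and the totality argument for surjectivity — is routine.
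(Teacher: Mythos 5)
Your proposal is correct and follows essentially the same route as the paper's own proof: establish the isometry on the dense subspace $\mathcal{C}\otimes\mathcal{C}$ by combining (\ref{E:scalarprodH}), (\ref{E:stochint}) and the identity $\mu_{<M^{[u]},M^{[w]}>}=\Gamma(u,w)$, extend by density and completeness of $(\mathring{\mathcal{M}},\mathbf{e})$, and deduce surjectivity from the denseness of $\left\lbrace f\bullet M^{[u]}:f,u\in\mathcal{C}\right\rbrace$ in $\mathring{\mathcal{M}}$ given by \cite[Lemma 5.6.3]{FOT94} via a totality argument. The additional details you supply (well-definedness on the quotient, closedness of the image, the orthogonality variant of the totality step) merely elaborate on the paper's argument rather than depart from it.
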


By arguments similar to those in the proof of Lemma \ref{L:nonnegmeasure} we obtain the following result on energy measures.
\begin{corollary}
For the energy measure of $M\in\mathring{\mathcal{M}}$ we have 
\[\mu_{<M>}=\Gamma_\mathcal{H}(\omega), \ \ \text{ where }\ \ \omega=\Theta^ {-1}(M).\]
\end{corollary}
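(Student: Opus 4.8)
The plan is to prove the identity first on a dense class of $1$-forms, where both measures are explicitly computable, and then to pass to the limit in exactly the way done in the proof of Lemma \ref{L:nonnegmeasure}. The structural fact I would establish at the outset is that $\Theta$ intertwines the right module action of $\mathcal{B}_b(X)$ on $\mathcal{H}$ with Nakao's stochastic integral, namely
\[\Theta(\omega\,h)=h\bullet\Theta(\omega),\qquad \omega\in\mathcal{H},\ h\in\mathcal{B}_b(X).\]
On simple forms this is immediate from (\ref{E:isotheta}) and (\ref{E:right}), since $(f\partial u)\,h$ corresponds to $(fh)\partial u$ while $(fh)\bullet M^{[u]}=h\bullet(f\bullet M^{[u]})$; it then extends to all of $\mathcal{H}$ by the continuity of the module action recorded in Section \ref{S:Energy} and the continuity of $f\mapsto f\bullet M$ from (\ref{E:stochint}).

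Next I would verify the claim for finite linear combinations $\omega=\sum_i f_i\partial u_i$ with $f_i,u_i\in\mathcal{C}$. Here $\Theta(\omega)=\sum_i 2\,f_i\bullet M^{[u_i]}$ by linearity, and combining the bilinearity of the sharp bracket, the property $\mu_{<g\bullet M,N>}=g\,\mu_{<M,N>}$ of the Nakao integral (see \cite[Theorem 5.6.1]{FOT94}), and the identity $\mu_{<M^{[u]},M^{[w]}>}=\Gamma(u,w)$, one computes $\mu_{<\Theta(\omega)>}$ as a sum $\sum_{i,j} f_i f_j\,\Gamma(u_i,u_j)$. By (\ref{E:energyh}) and (\ref{E:GammaH}) this is precisely $\Gamma_{\mathcal{H}}(\omega)$ (with the normalization fixed by (\ref{E:isotheta})), so the corollary holds on this class.

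For a general $\omega\in\mathcal{H}$ I would choose finite linear combinations $\omega_k\to\omega$ in $\mathcal{H}$ and set $M_k:=\Theta(\omega_k)$, so that $M_k\to M:=\Theta(\omega)$ in $(\mathring{\mathcal{M}},\mathbf{e})$ by the isometry property of $\Theta$. Testing against a nonnegative $\varphi\in C_0(X)$, the defining relation (\ref{E:stochint}) together with the intertwining identity and the isometry of $\Theta$ expresses $\int_X\varphi\,d\mu_{<M_k>}$ through
\[\mathbf{e}\big(\sqrt{\varphi}\bullet M_k\big)=\mathbf{e}\big(\Theta(\omega_k\sqrt{\varphi})\big)=\left\|\omega_k\sqrt{\varphi}\right\|_{\mathcal{H}}^2,\]
which converges to $\left\|\omega\sqrt{\varphi}\right\|_{\mathcal{H}}^2$ because right multiplication by $\sqrt{\varphi}$ is continuous on $\mathcal{H}$. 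By the construction in the proof of Lemma \ref{L:nonnegmeasure} this last quantity is exactly $\int_X\varphi\,d\Gamma_{\mathcal{H}}(\omega)$. Since $\mu_{<M_k>}=\Gamma_{\mathcal{H}}(\omega_k)$ for every $k$, the limits agree for all such $\varphi$, and the Riesz-representation and weak-convergence argument of Lemma \ref{L:nonnegmeasure} then yields $\mu_{<M>}=\Gamma_{\mathcal{H}}(\omega)$ as an equality in $\mathcal{M}(X)$.

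The step I expect to require the most care is matching the two limiting procedures. Both $\mu_{<M_k>}$ and $\Gamma_{\mathcal{H}}(\omega_k)$ converge only in a weak, setwise sense, so I must reduce everything to integration against nonnegative $\varphi\in C_0(X)$ and represent each integral as a single squared norm --- $\left\|\omega_k\sqrt{\varphi}\right\|_{\mathcal{H}}^2$ on the form side and $\mathbf{e}(\sqrt{\varphi}\bullet M_k)$ on the probabilistic side --- so that the one continuity estimate supplied by the isometry controls both simultaneously. The genuinely new ingredient underpinning this reduction is the intertwining identity $\Theta(\omega h)=h\bullet\Theta(\omega)$; once it is in place, the remainder is a verbatim repetition of the Riesz-representation argument already carried out in Lemma \ref{L:nonnegmeasure}.
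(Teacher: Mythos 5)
Your proposal is correct and takes essentially the same route as the paper: the paper's entire proof is the remark that the claim follows ``by arguments similar to those in the proof of Lemma \ref{L:nonnegmeasure}'', which is exactly your scheme --- verify the identity on finite linear combinations of simple forms, then test against nonnegative $\varphi\in C_0(X)$ and repeat the Riesz-representation limit argument, with your intertwining identity $\Theta(\omega h)=h\bullet\Theta(\omega)$ making explicit what the paper leaves implicit in (\ref{E:isotheta}) and (\ref{E:stochint}). The only point requiring care is the one your parenthetical remark already anticipates: the normalizations of $\mu_{<M,N>}$ and of the stochastic integral must be tracked consistently with the factor $2$ in (\ref{E:isotheta}), since the paper's conventions differ from those of \cite{FOT94} by constant factors.
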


We can also reinterpret the gradient $\partial u$ of an energy finite function $u\in\mathcal{F}$ as the element uniquely corresponding to 
the martingale part of $A^{[u]}$.
\begin{corollary}
The image of the space $Im\:\partial$ under $\Theta$ is the subspace $\left\lbrace M^{[u]}: u\in\mathcal{F}\right\rbrace$ of $\mathring{\mathcal{M}}$, and for any $u\in\mathcal{F}$ we have $M^{[u]}=\frac{1}{2}\Theta(\partial u)$.
\end{corollary}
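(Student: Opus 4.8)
The plan is to establish the pointwise formula $M^{[u]}=\tfrac12\Theta(\partial u)$ first for $u\in\mathcal{C}$ and then extend it to all of $\mathcal{F}$ by density; the statement about $\Theta(Im\:\partial)$ follows at once. The only genuine issue is that $\partial u=u\otimes\mathbf{1}$ arises from the defining relation (\ref{E:isotheta}) by multiplying the form $\partial u$ with the constant function $\mathbf{1}$, which in general does \emph{not} belong to $\mathcal{C}$. Hence $\Theta(\partial u)=2\,\mathbf{1}\bullet M^{[u]}$ is not literally an instance of (\ref{E:isotheta}) and must be reached through an approximation argument.

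First I would fix $u\in\mathcal{C}$ and choose, using the regularity of $(\mathcal{E},\mathcal{F})$ and the local compactness of $X$, a sequence $(f_n)_n\subset\mathcal{C}$ with $0\leq f_n\leq 1$ and $f_n\to\mathbf{1}$ pointwise (cutoff functions equal to $1$ on an exhausting sequence of compacts). Since $\Gamma(u)=\mu_{<M^{[u]}>}$ is a finite measure, the elements $f_n\partial u=u\otimes f_n\in\mathcal{C}\otimes\mathcal{C}$ satisfy
\[\left\|u\otimes\mathbf{1}-u\otimes f_n\right\|_\mathcal{H}^2=\int_X(1-f_n)^2\,d\Gamma(u)\longrightarrow 0\]
by dominated convergence, so $f_n\partial u\to\partial u$ in $\mathcal{H}$ and therefore $\Theta(f_n\partial u)\to\Theta(\partial u)$ in $\mathring{\mathcal{M}}$ because $\Theta$ is an isometry. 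On the other hand (\ref{E:isotheta}) gives $\Theta(f_n\partial u)=2\,f_n\bullet M^{[u]}$, and as $f_n\to\mathbf{1}$ in $L_2(X,\mu_{<M^{[u]}>})$ (again by dominated convergence) the continuity of $f\mapsto f\bullet M^{[u]}$, cf. \cite[Theorem 5.6.1]{FOT94}, yields $f_n\bullet M^{[u]}\to\mathbf{1}\bullet M^{[u]}$ in $\mathring{\mathcal{M}}$. The main obstacle is to identify this limit: from (\ref{E:stochint}) with $f=\mathbf{1}$ and the standard relation $\mathbf{e}(M,N)=\tfrac12\mu_{<M,N>}(X)$ one obtains $\mathbf{e}(\mathbf{1}\bullet M^{[u]},N)=\tfrac12\mu_{<M^{[u]},N>}(X)=\mathbf{e}(M^{[u]},N)$ for every $N\in\mathring{\mathcal{M}}$, whence $\mathbf{1}\bullet M^{[u]}=M^{[u]}$ by nondegeneracy of the inner product $\mathbf{e}$. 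Combining the two limits gives $\Theta(\partial u)=2M^{[u]}$ for $u\in\mathcal{C}$.

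To pass to arbitrary $u\in\mathcal{F}$ I would take $u_n\in\mathcal{C}$ with $u_n\to u$ in $\mathcal{F}$, which is possible since $\mathcal{C}$ is dense in $\mathcal{F}$. Both sides are linear and $\mathcal{E}_1$-continuous in $u$: on the one hand $\left\|\partial(u_n-u)\right\|_\mathcal{H}^2\leq 2\mathcal{E}(u_n-u)\to 0$ by Corollary \ref{C:simplecons}(ii), so $\Theta(\partial u_n)\to\Theta(\partial u)$; on the other hand $u\mapsto M^{[u]}$ is linear by uniqueness of the Fukushima decomposition and $\mathbf{e}(M^{[u_n]}-M^{[u]})=\mathbf{e}(M^{[u_n-u]})=\tfrac12\mathcal{E}(u_n-u)\to 0$, so $M^{[u_n]}\to M^{[u]}$ in $\mathring{\mathcal{M}}$. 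Taking limits in $M^{[u_n]}=\tfrac12\Theta(\partial u_n)$ yields the formula for all $u\in\mathcal{F}$. Finally $\Theta(Im\:\partial)=\{2M^{[u]}:u\in\mathcal{F}\}=\{M^{[v]}:v\in\mathcal{F}\}$, since $\mathcal{F}$ is a vector space, which is the first assertion. Everything beyond the verification of $\mathbf{1}\bullet M=M$ is a routine continuity-and-density argument.
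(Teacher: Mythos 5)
Your proposal is correct and takes essentially the route the paper intends: the paper states this corollary without proof, treating it as immediate from the defining relation (\ref{E:isotheta}) and the isomorphism theorem, and your argument supplies exactly the suppressed details, namely the cutoff approximation $u\otimes f_n\to u\otimes\mathbf{1}$ in $\mathcal{H}$ (needed because $\mathbf{1}\notin\mathcal{C}$ in general), the identification $\mathbf{1}\bullet M^{[u]}=M^{[u]}$ via $\mathbf{e}(M,N)=\tfrac12\mu_{<M,N>}(X)$, and the $\mathcal{E}_1$-density extension from $\mathcal{C}$ to $\mathcal{F}$. The only caveat is one you inherit from the paper rather than create: the bookkeeping of factors of $2$ relating $\Gamma$, $\mu_{<M^{[u]}>}$ and $\mathbf{e}$ is conventions-dependent (the paper itself remarks on this after (\ref{E:isotheta})), but since in your proof these constants only enter convergence-to-zero estimates, your argument is unaffected.
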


Also the divergence $\partial^\ast v$ of a vector field $v\in\mathcal{H}$ has probabilistic meaning. We briefly recall a construction from \cite[Section 3]{N85}. To simplify notation let us assume that $(\mathcal{E},\mathcal{F})$ is conservative. Set
\[\lambda(h;M):=\frac12\mu_{<M^{h},M>}(X)\ \ \text{ for $h\in\mathcal{F}$ and $M\in\mathring{\mathcal{M}}$}.\]
For fixed $M\in\mathring{\mathcal{M}}$ define a continuous additive functional $\Gamma(M)$ by
\[\Gamma(M)_t=N^{[w]}_t-\int_0^tw(Y_s)ds, \]
where $w\in\mathcal{F}$ is the unique function such that 
\[\lambda(h;M)=\mathcal{E}_1(w,h).\]
The functional $\Gamma(M)$ is characterized by the validity of
\begin{equation}\label{E:charactident}
\lim_{t\to 0}\frac{1}{t} \mathbb{E}_{h\cdot \mu}[\Gamma(M)_t]=-\lambda(h;M)
\end{equation}
for any $h\in\mathcal{C}$. In \cite{N85} the functional $\Gamma(M)$ had been used to define It\^o and Stratonovich type integrals with respect to continuous additive functionals, see also \cite{ChFKuZh06, ChFKuZh08}. The following first observation is immediate.

\begin{lemma}
Let $M\in\mathring{\mathcal{M}}$ and $\omega=\Theta^{-1}(M)$. Then we have
\[\partial^\ast \omega(h)=-\lambda(h;M)\]
for all $h\in\mathcal{C}$.
\end{lemma}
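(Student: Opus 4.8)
The plan is to transport the defining relation for the divergence from the $1$-form side to the martingale side through the isometry $\Theta$, and then to recognize the resulting quantity as $\lambda(h;M)$. First I would invoke the distributional description of the divergence from Lemma~\ref{L:div}: for $h\in\mathcal{C}$ and the vector field $\omega\in\mathcal{H}$ one has
\[
\partial^\ast\omega(h)=-\left\langle \omega,\partial h\right\rangle_\mathcal{H},
\]
so that the whole statement reduces to evaluating the inner product $\left\langle\omega,\partial h\right\rangle_\mathcal{H}$ in probabilistic terms.

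Next, since $\Theta\colon\mathcal{H}\to\mathring{\mathcal{M}}$ is a linear isometry onto $\mathring{\mathcal{M}}$, polarizing the isometry relation $\mathbf{e}(\Theta\,\cdot)=\left\|\cdot\right\|_\mathcal{H}^2$ identifies the Hilbert inner product on $\mathcal{H}$ with the energy bilinear form $\mathbf{e}$ on $\mathring{\mathcal{M}}$, whence
\[
\left\langle\omega,\partial h\right\rangle_\mathcal{H}=\mathbf{e}\big(\Theta\omega,\Theta(\partial h)\big)=\mathbf{e}\big(M,\Theta(\partial h)\big).
\]
Here $\Theta\omega=M$ by the choice $\omega=\Theta^{-1}(M)$, and by the corollary immediately preceding the statement $\Theta(\partial h)$ agrees, up to the normalizing constant carried by (\ref{E:isotheta}), with the martingale part $M^{[h]}$ of the Fukushima decomposition of $A^{[h]}$, which is available because $h\in\mathcal{C}\subset\mathcal{F}$.

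Finally I would pass from the energy form $\mathbf{e}$ back to an energy measure. Specializing the stochastic integral identity (\ref{E:stochint}) to $f=\mathbf{1}$ and using $\mathbf{1}\bullet M^{[h]}=M^{[h]}$, which is legitimate since $\mathbf{1}\in L_2(X,\mu_{<M^{[h]}>})$ because $\mu_{<M^{[h]}>}(X)<\infty$, yields
\[
\mathbf{e}\big(M,M^{[h]}\big)=\tfrac12\,\mu_{<M^{[h]},M>}(X),
\]
and the right-hand side is by definition exactly $\lambda(h;M)$. Chaining the three displays gives $\partial^\ast\omega(h)=-\lambda(h;M)$ for every $h\in\mathcal{C}$. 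A parallel route would instead combine $\left\langle\omega,\partial h\right\rangle_\mathcal{H}=\Gamma_{\mathcal{H}}(\omega,\partial h)(X)$ from Lemma~\ref{L:nonnegmeasure} with the already established correspondence $\mu_{<M>}=\Gamma_{\mathcal{H}}(\Theta^{-1}M)$, polarized in both arguments.

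The step demanding genuine care is the bookkeeping of normalization constants: the factor built into $\Theta$ in (\ref{E:isotheta}), the $\tfrac12$ in the definition of $\mathbf{e}$, and the $\tfrac12$ appearing in both (\ref{E:stochint}) and in the definition of $\lambda(h;M)$ must be tracked so that they cancel to leave precisely the single factor recorded in the statement. A secondary point is that the constant function $\mathbf{1}$ need not lie in $\mathcal{C}$ when $X$ is non-compact, so the identification $\Theta(\partial h)\leftrightarrow M^{[h]}$ and the relation $\mathbf{1}\bullet M^{[h]}=M^{[h]}$ should be justified through the finite-energy and (as assumed here) conservative setting rather than by literally inserting $f=\mathbf{1}$ into the simple-tensor formula (\ref{E:isotheta}).
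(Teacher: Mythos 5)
Your route is exactly the one the paper intends: the paper in fact offers no written proof at all (the lemma is introduced with the phrase ``The following first observation is immediate''), and the immediate argument is precisely your chain --- Lemma~\ref{L:div} to reduce the claim to $-\left\langle \omega,\partial h\right\rangle_{\mathcal{H}}$, polarization of the isometry $\Theta$ to rewrite this as $-\mathbf{e}(M,\Theta(\partial h))$, identification of $\Theta(\partial h)$ with the martingale part $M^{[h]}$ of the Fukushima decomposition, and (\ref{E:stochint}) at $f=\mathbf{1}$ (equivalently, the standard identity $\mathbf{e}(M,N)=\frac{1}{2}\mu_{<M,N>}(X)$) to recognize $\lambda(h;M)$. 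Your closing remark is also well taken: $\mathbf{1}\bullet M^{[h]}=M^{[h]}$ is not a formal substitution into (\ref{E:isotheta}) but amounts to the identity $\mathbf{e}(M^{[h]},N)=\frac{1}{2}\mu_{<M^{[h]},N>}(X)$ for all $N\in\mathring{\mathcal{M}}$, which is available from \cite{FOT94}.

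The one place where your write-up overpromises is the constant bookkeeping that you yourself single out as the delicate step: if you track the paper's printed normalizations literally, the constants do \emph{not} cancel. The corollary preceding the lemma gives $\Theta(\partial h)=2M^{[h]}$, so polarization yields
\[
\left\langle \omega,\partial h\right\rangle_{\mathcal{H}}=\mathbf{e}\bigl(M,2M^{[h]}\bigr)=2\,\mathbf{e}\bigl(M,M^{[h]}\bigr)=2\lambda(h;M),
\]
i.e.\ $\partial^\ast\omega(h)=-2\lambda(h;M)$, off by a factor $2$ from the statement. This is not a defect of your argument but an inconsistency in the paper's conventions: (\ref{E:stochint}) together with the asserted identity $\mu_{<M^{[u]},M^{[w]}>}=\Gamma(u,w)$ forces $\mathbf{e}(2f\bullet M^{[u]})=2\left\|f\partial u\right\|_{\mathcal{H}}^2$, contradicting the isometry property claimed for (\ref{E:isotheta}); the printed normalizations cannot all hold simultaneously, so no bookkeeping can make them cancel. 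Under the standard conventions of \cite{FOT94} --- where $\mu_{<M^{[u]},M^{[w]}>}=2\Gamma(u,w)$ and consequently the isometry carries no factor, $\Theta(f\partial u)=f\bullet M^{[u]}$ and $\Theta(\partial h)=M^{[h]}$ --- your chain closes exactly as you state:
\[
\partial^\ast\omega(h)=-\mathbf{e}\bigl(M,M^{[h]}\bigr)=-\tfrac{1}{2}\mu_{<M^{[h]},M>}(X)=-\lambda(h;M).
\]
So your proof is correct once a consistent normalization is fixed, and the fact that the lemma's statement comes out on the nose under the Fukushima--Oshima--Takeda conventions confirms that these (and not the literal factor $2$ of (\ref{E:isotheta})) are the intended ones.
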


Recall that to each continuous additive functional $A=(A_t)_{t\geq 0}$ there exists a unique signed smooth Borel measure $\mu_A$ such that 
\begin{equation}\label{E:Revuzcorr}
\lim_{t\to 0}\frac1t\mathbb{E}_{h\cdot \mu}((f\cdot A)_t)=\int_X \widetilde{h} fd\mu_A
\end{equation}
for any $f\in \mathcal{B}_b(X)$ and any nonnegative $h\in\mathcal{F}$, as follows from \cite[Theorem 5.1.4]{FOT94}. In this case $\mu_A$ is called the \emph{signed Revuz measure of $A$}. Given $M\in\mathring{\mathcal{M}}$, let $\mu_{\Gamma(M)}$ denote the signed Revuz measure of $\Gamma(M)$. 
From (\ref{E:charactident}) together with the previous lemma we obtain the following.

\begin{corollary}
Let $M\in\mathring{\mathcal{M}}$ and $\omega=\Theta^{-1}(M)$. Then the element $\partial^\ast\omega$ of $\mathcal{C}^\ast$
can be represented by integration with respect to $\mu_{\Gamma(M)}$, 
\[\partial^\ast\omega(h)=\int_Xhd\mu_{\Gamma(M)}, \ \ h\in\mathcal{C}.\] 
Moreover, if $\omega\in dom\:\partial^\ast$, then $\mu_{\Gamma(M)}$ is absoulutely continuous with respect to $\mu$, and its density is $\partial^ \ast\omega\in L_2(X,\mu)$.
\end{corollary}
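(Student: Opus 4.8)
The plan is to read the divergence off the additive functional $\Gamma(M)$ by chaining the preceding Lemma with the characterizing identity (\ref{E:charactident}) and the Revuz correspondence (\ref{E:Revuzcorr}), and then, under the extra hypothesis $\omega\in dom\:\partial^\ast$, to recognize the resulting functional as integration against an $L_2$-density.

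First I would prove the representation formula for all $h\in\mathcal{C}$. By the preceding Lemma, $\partial^\ast\omega(h)=-\lambda(h;M)$, and the characterizing property (\ref{E:charactident}) of $\Gamma(M)$ turns this into $\partial^\ast\omega(h)=\lim_{t\to 0}\frac{1}{t}\mathbb{E}_{h\cdot\mu}[\Gamma(M)_t]$. For a nonnegative $h\in\mathcal{C}\subset\mathcal{F}$ I may apply (\ref{E:Revuzcorr}) to the continuous additive functional $A=\Gamma(M)$ with the bounded Borel integrand $f=\mathbf{1}$: since $\mathbf{1}\cdot\Gamma(M)=\Gamma(M)$ and $\widetilde{h}=h$ for the continuous function $h$, the right-hand side of (\ref{E:Revuzcorr}) equals $\int_X h\:d\mu_{\Gamma(M)}$. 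Hence $\partial^\ast\omega(h)=\int_X h\:d\mu_{\Gamma(M)}$ for every nonnegative $h\in\mathcal{C}$. Because the Markov property makes $\mathcal{C}=C_0(X)\cap\mathcal{F}$ a vector lattice, every $h\in\mathcal{C}$ decomposes as $h=h_+-h_-$ with $h_\pm\in\mathcal{C}$ nonnegative, and the linearity of both sides extends the formula to all of $\mathcal{C}$. This is the first assertion.

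Now assume $\omega\in dom\:\partial^\ast$. The defining relation (\ref{E:ibp2}) of the Hilbert space divergence, combined with Lemma \ref{L:div}, shows that the functional $\partial^\ast\omega\in\mathcal{C}^\ast$ is given by integration against the $L_2$-function $\partial^\ast\omega$, namely $\partial^\ast\omega(h)=\left\langle h,\partial^\ast\omega\right\rangle_{L_2(X,\mu)}=\int_X h\,(\partial^\ast\omega)\:d\mu$ for $h\in\mathcal{C}$. Comparing with the first part yields
\[\int_X h\:d\mu_{\Gamma(M)}=\int_X h\,(\partial^\ast\omega)\:d\mu\ ,\qquad h\in\mathcal{C}.\]
By Cauchy--Schwarz the right-hand side is bounded by $\left\|\partial^\ast\omega\right\|_{L_2(X,\mu)}\left\|h\right\|_{L_2(X,\mu)}$, so $h\mapsto\int_X h\:d\mu_{\Gamma(M)}$ is $L_2(X,\mu)$-continuous on the dense subspace $\mathcal{C}$ and is represented by $\partial^\ast\omega$.

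The crux --- and the step I expect to be the main obstacle --- is to promote this test-function identity to the asserted equality of measures, since the functions $h\in\mathcal{C}$ need not have compact support and both $\mu$ and, a priori, $\mu_{\Gamma(M)}$ may be infinite. Writing $\nu:=(\partial^\ast\omega)\,\mu$, I would first note that $\nu$ is itself a signed smooth measure: it is $\sigma$-finite because $\partial^\ast\omega\in L_2(X,\mu)$, and it charges no set of zero capacity because such sets are $\mu$-null. Thus both $\mu_{\Gamma(M)}$ and $\nu$ are signed Revuz measures, and the displayed identity says precisely that their Revuz limits $\lim_{t\to 0}\frac{1}{t}\mathbb{E}_{h\cdot\mu}[\cdot]$ coincide for every $h\in\mathcal{C}$ (equivalently, for every nonnegative $h\in\mathcal{C}$). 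Since $\mathcal{C}$ is a core of $(\mathcal{E},\mathcal{F})$ and uniformly dense in $C_0(X)$, the uniqueness in the (signed) Revuz correspondence forces $\mu_{\Gamma(M)}=\nu$. Consequently $\mu_{\Gamma(M)}$ is absolutely continuous with respect to $\mu$ with density $\partial^\ast\omega\in L_2(X,\mu)$, as claimed. The care needed lies entirely in this uniqueness step: one must verify that evaluation against the single class $\mathcal{C}$ determines a possibly infinite signed smooth measure, which is exactly where the smoothness of $\nu$ and the status of $\mathcal{C}$ as a core are used.
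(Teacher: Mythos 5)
Your first paragraph reproduces the paper's own proof of the representation formula exactly: for nonnegative $h\in\mathcal{C}$ one chains the preceding Lemma with (\ref{E:charactident}) and (\ref{E:Revuzcorr}) (taking $f=\mathbf{1}$, $A=\Gamma(M)$, and $\widetilde{h}=h$), and then extends to all of $\mathcal{C}$ by linearity via the lattice decomposition $h=h_+-h_-$, which is available inside $\mathcal{C}$ because normal contractions operate on $\mathcal{F}$. That part is correct and is the same argument as in the paper; the paper then dismisses the second assertion with the words ``immediate consequence'', so the only point of real comparison is your closing paragraph.

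There your argument has one genuine soft spot, namely the sentence claiming that ``the uniqueness in the (signed) Revuz correspondence forces $\mu_{\Gamma(M)}=\nu$''. The uniqueness in the Revuz correspondence is the uniqueness of the measure attached to a given additive functional through the limit formula (\ref{E:Revuzcorr}) ranging over \emph{all} $f\in\mathcal{B}_b(X)$ and \emph{all} nonnegative $h\in\mathcal{F}$; it does not assert that a signed smooth measure is determined by its integrals against the single class $\mathcal{C}$ with $f=\mathbf{1}$, which is what you actually need, and which your last sentence concedes is left unverified. So, as written, the final step is a gap --- but it is filled by an argument more elementary than the one you anticipate, because of a misreading in your setup: in this paper (as in Fukushima--Oshima--Takeda, whose conventions it follows) $C_0(X)$ denotes the \emph{compactly supported} continuous functions, so every $h\in\mathcal{C}$ does have compact support. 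Given a compact $K$, regularity supplies $h\in\mathcal{C}$ with $0\leq h\leq 1$ and $h\equiv 1$ on $K$; since the first assertion presupposes in particular that each such $h$ is $|\mu_{\Gamma(M)}|$-integrable, one gets $|\mu_{\Gamma(M)}|(K)\leq\int_X h\,d|\mu_{\Gamma(M)}|<\infty$, while $\int_K|\partial^\ast\omega|\,d\mu\leq\left\|\partial^\ast\omega\right\|_{L_2(X,\mu)}\mu(K)^{1/2}<\infty$. Hence both $\mu_{\Gamma(M)}$ and $(\partial^\ast\omega)\mu$ are signed Radon measures; they agree on $\mathcal{C}$, which by regularity is uniformly dense in $C_0(X)$ (and one can keep all approximants supported in a fixed compact by multiplying with a cut-off from the algebra $\mathcal{C}$), so they induce the same functional on $C_0(X)$ and coincide by the Riesz representation theorem. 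This Radon--Riesz identification is presumably the ``immediate consequence'' the paper has in mind; no appeal to smoothness, cores, or Revuz uniqueness is needed at that stage.
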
 
\begin{proof}
For nonnegative $h\in\mathcal{C}$ identity (\ref{E:charactident}) together with (\ref{E:Revuzcorr}) yields $\int_X h d\mu_{\Gamma(M)}=-\lambda(h;M)$, and the first statement follows by linearity and the previous lemma. The second statement is an immediate consequence.
\end{proof}

\begin{examples}
For $u\in\mathcal{C}$ and $M=2M^{[u]}=\Theta(\partial u)$ we have 
\[\partial^\ast\partial u(h)=-\mathcal{E}(u,h)=\lim_{t\to 0}\frac1t\mathbb{E}_{h\cdot \mu}[N^{[u]}_t],\]
and for $-Lu$, viewed as a member of $\mathcal{C}^\ast$ as in (\ref{E:goodcase}), we obtain the measure representation
\[(-Lu)(h)=\int_X fd\mu_{\Gamma(M)}.\]
If in addition $u\in dom\:L$,  then $\partial u\in dom\:\partial^\ast$ and we have $d\mu_{\Gamma(M)}=-Lu\:d\mu$, seen as an equality of signed measures.
\end{examples}
 
We finally consider a simple version of a related perturbation result. To do so, we assume that $m$ is a measure satisfying Assumption \ref{A:energydominant} and that $(\mathcal{E},\mathcal{C})$ is closable on $L_2(X,m)$ with closure $(\mathcal{E},\mathcal{F}^{(m)})$. Recall that this is particularly true for $m=\mu$ if $\mu$ itself satisfies Assumption \ref{A:energydominant}. Let 
\[\mathcal{H}_\infty:=\left\lbrace v\in\mathcal{H}: \Gamma_{\mathcal{H},\cdot}(v)\in L_\infty(X,m)\right\rbrace\]
denote the space of \emph{vector fields of bounded length}. Given $b\in\mathcal{H}_\infty$ and $u\in\mathcal{F}^{(m)}$, Cauchy-Schwarz yields
\[\int_X\widetilde{u}^2\Gamma_{\mathcal{H},\cdot}(b)dm\leq \left\|\Gamma_{\mathcal{H},\cdot}(b)\right\|_{L_\infty(X,m)}\left\|u\right\|_{L_2(X,m)}.\]
(Here and in the following the quasi-continuous versions are taken with respect to $(\mathcal{E},\mathcal{F}^{(m)})$.)
In particular, the measure $\Gamma_{\mathcal{H},\cdot}dm$ is of the \emph{Hardy class}, cf. \cite[p. 141]{FK04}. 

If $b,\hat{b}\in\mathcal{H}_\infty$ and $c\in L_\infty(X,m)$ then the bilinear form 
\begin{multline}\label{E:formQ}
\mathcal{Q}(f,g):=\mathcal{E}(f,g)-\int_X\widetilde{g}(x)\left\langle b_x,\partial_x f\right\rangle_{\mathcal{H}_x}m(dx)
-\int_X\widetilde{f}(x)\left\langle \hat{b}_x,\partial_x g\right\rangle_{\mathcal{H}_x}m(dx)\\
-\int_X f(x)g(x)c(x)m(dx),
\end{multline}
$f,g\in\mathcal{F}^{(m)}$, is closed on $L_2(X,m)$, as can be seen by standard perturbation arguments. Moreover, the strongly continuous $L_2$-semigroup associated with
\[\mathcal{Q}_\alpha(f,g):=\mathcal{Q}(f,g)+\alpha\left\langle f,g\right\rangle_{L_2(X,m)}\]
is \emph{positivity preserving} if $\alpha>0$ is large enough. See for instance \cite{FK04} (in particular p. 142) and the references therein. In general this semigroup may possibly fail to be Markovian, cf. \cite{LLZh98}. 

If $\hat{b}\in dom\:\partial^\ast$, then we have
\begin{align}
\partial^\ast(u\hat{b})(h)&=-\left\langle \hat{b},u\partial h\right\rangle_\mathcal{H}\notag\\
&=-\left\langle \hat{b},\partial (uh)\right\rangle_\mathcal{H}+\left\langle \hat{b},h\partial u\right\rangle_\mathcal{H}\notag\\
&=\int_X h(x)(\partial^\ast\hat{b})(x)u(x)m(dx)+\int_X h(x)\left\langle \hat{b}_x,\partial_x u\right\rangle_{\mathcal{H}_x}m(dx)\notag
\end{align}
for any $h,u\in\mathcal{C}$. That is, we can reinterpret $\partial^\ast(u\hat{b})$ as a measure that is absolutely continuous with respect to $m$ and has a density given by
\[\partial^\ast(u\hat{b})(x)=(\partial^\ast \hat{b})(x)u(x)+\left\langle b_x\partial_xu\right\rangle_{\mathcal{H}_x} \]
for $m$-a.a. $x\in X$. For the generator $L^\mathcal{Q}$ of $\mathcal{Q}$ and any function $u\in \mathcal{C}\cap dom\:L^{(m)}$ we therefore have
\[L^\mathcal{Q}u(x)=L^{(m)}u(x)+\left\langle b_x,\partial_xu\right\rangle_{\mathcal{H}_x}-\partial^\ast(u\hat{b})(x)+c(x)u(x)\]
for $m$-a.a. $x\in X$. This is a generalization of formula (1.12) in \cite[Example 1.1]{FK04}. The operator $L^{\mathcal{Q}}$ may be seen as the analog of a non-divergence form operator in our context.

\end{document}